\newcommand{\Rmnum}[1]{\expandafter\@slowromancap\romannumeral #1@}
\newtheorem{theorem}{Theorem}[section]
\newtheorem{lemma}{Lemma}[section]
\newtheorem{proposition}{Proposition}[section]
\newtheorem{remark}{Remark}[section]
\newcommand{\C}{{\mathbb C}}
\newcommand{\R}{{\mathbb R}}
\newcommand{\bean}{\begin{eqnarray*}}
	\newcommand{\eean}{\end{eqnarray*}}
\newcommand{\sbr}[1]{\left(#1\right)}%%%?????
\newcommand{\mbr}[1]{\left[#1\right]}%%%??????
\newcommand{\lbr}[1]{\left\{#1\right\}}%%%??????
\numberwithin{equation}{section}
\begin{document}
	\theoremstyle{plain}

	\title{\bf  Positive  solution  for an elliptic system with critical exponent and logarithmic terms
		\thanks{Supported NSFC(No.12171265). E-mail addresses: hhajaie@calstatela.edu (H. Hajaiej), liuth19@mails.tsinghua.edu.cn (T. H. Liu),  songlinjie18@mails.ucas.ac.cn (L. J. Song),  zou-wm@mail.tsinghua.edu.cn (W. M. Zou).} }
	
\author{Hichem Hajaiej$^{\mathrm{a,}}$, Tianhao Liu$^{\mathrm{b,}}$, Linjie Song$^{\mathrm{c,d,}}$, Wenming Zou$^{\mathrm{e,}}$ \thanks{%
The author is supported by CEMS. Email: songlinjie18@mails.ucas.edu.cn.}\  \\
%EndAName
\\
{\small $^{\mathrm{a}}$ Department of Mathematics, California State University at Los Angeles, Los Angeles, CA 90032, USA}\\
{\small $^{\mathrm{b,e}}$ Department of Mathematical Sciences, Tsinghua University, Beijing 100084}\\
{\small $^{\mathrm{c}}$Institute of Mathematics, AMSS, Academia Sinica,
Beijing 100190, China}\\
{\small $^{\mathrm{d}}$University of Chinese Academy of Science, Beijing 100049, China}
}
	\date{}
	%\author{
	%	{\bf  Hichem Hajaiej, \; Tianhao Liu, \; Linjie Song, \; Wenming Zou}\\
	%	\footnotesize \it  Department of Mathematical Sciences, Tsinghua University, Beijing 100084\\
	%}
	%Department of Mathematics, California State University at Los Angeles, Los Angeles, CA 90032, USA
	
	%%%%%%%-------------------------------??===Abstract-------------------------------
	\maketitle
	
	%\begin{center}
	%	\begin{minipage}{120mm}
			\begin{center}{\bf Abstract }	\end{center}

			 In this paper, we study the existence and nonexistence  of  positive solutions   for the following coupled elliptic system  with  critical exponent and logarithmic terms:
			 \begin{equation}
			 	\begin{cases}
			 		-\Delta u=\lambda_{1}u+
			 		\mu_1|u|^{2}u+\beta |v|^{2}u+\theta_1 u\log u^2, & \quad x\in \Omega,\\
			 		-\Delta v=\lambda_{2}v+
			 		\mu_2|v|^{2}v+\beta |u|^{2}v+\theta_2 v\log v^2,  &\quad x\in \Omega,\\
			 		u=v=0, &\quad  x \in \partial \Omega,
			 	\end{cases}
			 \end{equation}
			 where $\Omega \subset \R^4$ is a bounded smooth domain,  the parameters $\lambda_{1},\lambda_{2},\theta_{1},\theta_{2}\in \R  $, $\mu_{1},\mu_2>0$ and $\beta \neq 0$ is a coupling constant. Note that the logarithmic term $s\log s^2$ has special properties, which makes the problem more complicated. We  show that this system has a positive least energy solution for  $|\beta|$ small and  positive large $\beta$ if $\lambda_{1},\lambda_{2}\in \R$ and  $\theta_{1},\theta_{2}>0$. While the situations for the case  $\theta_{1},\theta_{2}<0$  are quite thorny, in this challenging setting we establish the existence result of positive local minima solutions and nonnegative solutions under various conditions on the parameters. Besides, under some further assumptions, we obtain the nonexistence of positive solutions for both the case where $\theta_{1}$,$\theta_{2}$ are
			 negative  and  the case  where they   have opposite signs.  Comparing our results with those of Chen and Zou  (Arch. Ration. Mech. Anal. 205:515--551, 2012), the logarithmic term  $s\log s^2$ introduces some new interesting phenomenon. Moreover, its presence brings major challenges and make it difficult to use the comparison theorem used in the work of Chen and Zou without new ideas and innovative techniques.   
			  To the best of our knowledge, our paper is  the first  to give a rather complete picture for the existence and nonexistence results to the coupled elliptic system  with   critical exponent and  logarithmic terms.
			  Also, we consider the related single equation
			 \begin{equation}
			 	-\Delta u=\lambda u + \mu|u|^{2}u+\theta u\log u^2, ~u\in H_0^1(\Omega)
			 \end{equation}
			 with $\mu>0$, $\theta<0$, $\lambda\in\R$ or $\lambda\in [0,\lambda_{1}(\Omega))$ and prove the existence of  the  positive  solution under some further suitable assumptions, which is the  type of  a local minima  or a  least energy solution.

			 \vskip0.23in
			
			 {\bf Key words: } Schr\"odinger system, 	Br\'ezis-Nirenberg  problem,  Critical exponent, Logarithmic perturbation, Positive solution,
			
			 \vskip0.1in

		%\end{minipage}
	%\end{center}

	\section{Introduction} \label{Sect1}
 
	Consider the following coupled elliptic system  with   logarithmic terms
	\begin{equation} \label{System1}
		\begin{cases}
			-\Delta u=\lambda_{1}u+
			\mu_1|u|^{2}u+\beta |v|^{2}u+\theta_1 u\log u^2, & \quad x\in \Omega,\\
			-\Delta v=\lambda_{2}v+
			\mu_2|v|^{2}v+\beta |u|^{2}v+\theta_2 v\log v^2,  &\quad x\in \Omega,\\
			u=v=0, &\quad  x \in \partial \Omega,
		\end{cases}
	\end{equation}
	%\begin{equation} \label{System1}
	%	\begin{cases}
	%		-\Delta u=\lambda_{1}u+
	%		\mu_1|u|^{2^*-2}u+\beta |u|^{\frac{2^*}{2}-2}|v|^{\frac{2^*}{2}}u+\theta_1 u\log u^2, & \quad x\in \Omega,\\
	%		-\Delta v=\lambda_{2}v+
	%		\mu_2|v|^{2^*-2}v+\beta |u|^{\frac{2^*}{2}}|v|^{\frac{2^*}{2}-2}v+\theta_2 v\log v^2,  &\quad x\in \Omega,\\
	%		u=v=0, &\quad  x \in \partial \Omega,
	%	\end{cases}
	%\end{equation}
	where $\Omega \subset \R^4$ is a bounded smooth domain,  $\lambda_{i},\theta_i\in \R  $ $(i=1,2)$, $\mu_{1},\mu_2>0$ and $\beta \neq 0$ is a coupling constant.

	 System \eqref{System1} is closely related to the following time-dependent nonlinear logarithmic type Schr\"odinger system
	\begin{equation} \label{System5}
		\begin{cases}
			\imath \partial_t \Psi_1	=\Delta \Psi_1 +
			\mu_1|\Psi_1|^{2}\Psi_1+\beta |\Psi_2|^{2}\Psi_1+\theta_1 \Psi_1\log \Psi_1^2 , \quad x\in \Omega,~ t>0,\\
			\imath \partial_t \Psi_2	=\Delta \Psi_2 +\
			\mu_2|\Psi_2|^{2}\Psi_2+\beta |\Psi_1|^{2}\Psi_2 +\theta_2 \Psi_2\log \Psi_2^2, \quad x\in \Omega, ~t>0,\\
			\Psi_i=\Psi_i(x,t)\in \C,\quad i=1,                                                                                                                    2\\
			\Psi_i(x,t)=0,\quad  x\in \partial \Omega,\quad t>0, \quad i=1,2,
		\end{cases}
	\end{equation}
	where $\imath$ is the imaginary unit. System \eqref{System5} appears in many physical fields, such as   quantum mechanics, quantum optics,
	nuclear physics, transport and diffusion phenomena,  open quantum systems, effective quantum gravity, theory
	of superfluidity and Bose-Einstein condensation. We  refer the readers to the papers \cite{Alfaro=DPDE=2017,Bialynicki=1975,Bialynicki=1976,Carles=2018,Carles=2014,Colin=2004,Poppenberg=2002,Wang=ARMA=2019}  for a survey on the related physical backgrounds.

	When $\theta_1=\theta_2=0$, the system  \eqref{System1}   reduces to the following coupled nonlinear Schr\"odinger system
	\begin{equation} \label{System3}
		\begin{cases}
			-\Delta u=\lambda_{1}u+
			\mu_1|u|^{2}u+\beta |v|^{2}u , &\quad x\in \Omega,\\
			-\Delta v=\lambda_{2}v+
			\mu_2|v|^{2}v+\beta |u|^{2}v , &\quad x\in \Omega,\\
			u=v=0, &\quad  x \in \partial \Omega ,
		\end{cases}
	\end{equation}
	which  appears when looking for standing wave solutions $\Psi_1(x,t)=e^{\imath \lambda_{1}t}u (x)$ and $\Psi_2(x,t)=e^{\imath \lambda_{2}t}v (x)$  of time-dependent coupled nonlinear Schr\"odinger system
	
	\begin{equation} \label{System4}
		\begin{cases}
			\imath \partial_t \Psi_1	=\Delta \Psi_1 +
			\mu_1|\Psi_1|^{2}\Psi_1+\beta |\Psi_2|^{2}\Psi_1 , \quad x\in \Omega,~ t>0,\\
			\imath \partial_t \Psi_2	=\Delta \Psi_2 +\
			\mu_2|\Psi_2|^{2}\Psi_2+\beta |\Psi_1|^{2}\Psi_2 , \quad x\in \Omega, ~t>0,\\
			\Psi_i=\Psi_i(x,t)\in \C,\quad i=1,                                                                                                                    2\\
			\Psi_i(x,t)=0,\quad  x\in \partial \Omega,\quad t>0, \quad i=1,2.
		\end{cases}
	\end{equation}
	This system \eqref{System4} originates from many physical models, especially in nonlinear optics and Bose-Einstein condensation. We refer for these to  \cite{Akhmediev1999,Esry1997,Frantzeskakis2010,Kivshar1998}, which also contain information about the physical relevance
	of non-cubic nonlinearities.

	Note  that the  nonlinearity and coupling terms  of system \eqref{System3}  are both  critical in dimension 4 (that is, the Sobolev critical exponent $2^*=\frac{2N}{N-2}=4$ when $N=4$), and the critical system \eqref{System3} was considered by Chen and Zou   \cite{Zou 2012} for the first time.  In \cite{Zou 2012}, they  proved that  there exist $0<\beta_*<\beta^*$ such that system \eqref{System3} has a positive least energy  solution if $\beta\in (-\infty, 0)\cup (0,\beta_* ) \cup (\beta^*, +\infty)$ when $0 <\lambda_{1},\lambda_{2}< \lambda_{1}(\Omega)$ and $\mu_1,\mu_2>0$,  where $\lambda_{1}(\Omega)$ is the first eigenvalue of $-\Delta$ with Dirichlet boundary conditions. We would also like to highlight the paper \cite{Zou 2015,Liu-You-Zou=arxiv=2022,YePeng2014} where it is shown, for more general powers, that the dimension has a significant impact on the existence of positive least energy  solutions.
	
	Inspired by their work, and noticing  that   the system \eqref{System1} can be viewed as system \eqref{System3} with a logarithmic perturbation,  we are interested in the existence and nonexistence of positive solution for the  system \eqref{System1} with   logarithmic terms. To the best of our knowledge, there are no  results in the literature dealing with \eqref{System1}.

	%	This paper is devoted to this direction.
	The logarithmic term has some special properties, which make things more challenging. It is easy to see that
	\begin{equation}
		\lim\limits_{u\to 0^+} \frac{u\log u^2}{u}=-\infty,
	\end{equation}
	that is, $ u=o( u\log u^2)$ for $u$ very close to 0. Comparing with the critical term $|u|^2u$, the logarithmic term $u\log u^2$ is a lower-order term at infinity. Also, the sign of the logarithmic term changes depending on  $u$.    Moreover, the presence of logarithmic term makes the structure of the corresponding functional complicated.
	
	% Now	we recall   the study of   a single equation with logarithmic perturbation,
	%	\begin{equation}\label{Equation2}
	%		-\Delta u=\lambda u + |u|^{2}u+\theta u\log u^2, ~u\in H_0^1(\Omega), ~ \Omega \subset \R^4,~ i=1,2,
	%	\end{equation}
	%	
	
	Recently, Deng et al.\cite{Deng-He-Pan=Arxiv=2022} investigated the existence of  positive least energy solutions for the following related single equation
	\begin{equation}\label{Equation1}
		-\Delta u=\lambda_{i} u + \mu_{i}|u|^{2}u+\theta_{i}u\log u^2, ~u\in H_0^1(\Omega), ~ \Omega \subset \R^4,~ i=1,2,
	\end{equation}
	In \cite{Deng-He-Pan=Arxiv=2022}, the authors  proved that the equation \eqref{Equation1} has  a positive least energy solution if $\lambda_{i}\in \R$, $\mu_{i},\theta_{i}>0$. Also, they obtained the existence and nonexistence results under  other different conditions, we refer the readers to \cite{Deng-He-Pan=Arxiv=2022} for details.  When $\theta_{i}=0$, the equation \eqref{Equation1} reduces to the classical 	Br\'ezis-Nirenberg  problem (see \cite{Brezis-Nirenberg1983}). It is well known that the classical 	Br\'ezis-Nirenberg  equation has  a positive least energy solution if $0<\lambda_{i}<\lambda_{1}(\Omega)$, $\mu_{i}>0$.  So, the logarithmic term $\theta_{i} u\log u^2$ has much more influence than the term $\lambda_{i} u$ on the existence solutions. Therefore, we believe that  some different phenomena  will occur	when  studying  the  existence of  positive least energy solution for the system \eqref{System1}.

	\vskip 0.1in
	In this paper, we will study  the existence and nonexistence of positive   solution for the  system \eqref{System1} with   logarithmic terms. The fact that $\theta_{i}\neq 0$ makes the problem quite different and also more
	complicated comparing to the case $\theta_{i}=0$ ($i=1,2$), and requires us to develop some different ideas and techniques.
	
	Let us first introduce some definitions.
	%	 \begin{definition}
	%	 	{\rm  We call a solution $(u,v)$ fully nontrivial if both $u\not \equiv 0$ and $v\not \equiv 0$; we call a solution $(u,v)$ semi-trivial  if  $u\equiv0 $, $v\not \equiv 0$ or $u\not \equiv0 $, $v \equiv 0$; we call a solution $(u,v)$ positive if both $u>0$ and $v>0$.}
	%	 \end{definition}
	We call a solution $(u,v)$ fully nontrivial if both $u\not \equiv 0$ and $v\not \equiv 0$; we call a solution $(u,v)$ semi-trivial  if  $u\equiv0 $, $v\not \equiv 0$ or $u\not \equiv0 $, $v \equiv 0$; we call a solution $(u,v)$ positive (resp. nonnegative) if both $u>0$ and $v>0$ (resp. both $u\geq 0$ and $v\geq0$ ).

	Define $\mathcal{H}:=H_0^1(\Omega)\times H_0^1(\Omega)$. To find a  positive solution to the system \eqref{System1}, we borrow the ideas from \cite{Deng-He-Pan=Arxiv=2022} to define a modified functional $\mathcal{L}:\mathcal{H} \to \R$
	\begin{equation} \label{Fun L}
		\begin{aligned}
			\mathcal{L}(u,v)&=\frac{1}{2}\int_{\Omega} |\nabla u|^2-\frac{\lambda_1}{2}\int_{\Omega}|u^+|^2-\frac{\mu_1}{4} \int_{\Omega}|u^+|^4-\frac{\theta_1}{2}\int_{\Omega}(u^+)^2(\log (u^+)^2-1)\\&+\frac{1}{2}\int_{\Omega} |\nabla v|^2-\frac{\lambda_2}{2}\int_{\Omega}|v^+|^2-\frac{\mu_2}{4} \int_{\Omega}|v^+|^4-\frac{\theta_2}{2}\int_{\Omega}(v^+)^2(\log (v^+)^2-1)-\frac{\beta}{2}\int_{\Omega}|u^+|^2|v^+|^2 ,
		\end{aligned}
	\end{equation}
	where $u^+:=\max\lbr{u,0}$, $u^-:=-\max \lbr{-u,0}$. It is easy to see that the functional $\mathcal{L}$ is well-defined in $\mathcal{H} $. Moreover,  any nonnegative critical point of $\mathcal{L}$ corresponds to a solution of the system \eqref{System1}.
	
	We say a solution $(u,v)$ of \eqref{System1} is a least energy solution if $(u,v)$ is fully nontrivial and $\mathcal{L}(u,v)\leq\mathcal{L}(\varphi,\psi) $ for any other fully nontrivial solution $(\varphi,\psi)$ of system \eqref{System1}. As in \cite{Lin-Wei=CMP=2005}, we consider
	\begin{equation}\label{Nehari manifold}
		\mathcal{N}=\lbr{(u,v)\in \mathcal{H}:u\not \equiv 0, v\not \equiv 0, \mathcal{L}^\prime(u,v)(u,0)=0 \text{ and }\mathcal{L}^\prime(u,v)(0,v)=0}.
	\end{equation}
	It is easy to check that  $\mathcal{N}\neq \emptyset$. Then  we set
	\begin{equation}
		\mathcal{C}_{\mathcal{N}}:=\inf_{(u,v)\in \mathcal{N}} \mathcal{L}(u,v).
	\end{equation}
	%	\begin{remark}
	%		{\rm  The functional $\mathcal{L}$ is bounded from below  on the Nehari manifold $\mathcal{N}$ under the condition $\beta>-\sqrt{\mu_1\mu_2}$. In fact, there exists some positive constants $c_0,C_0>0$ such that
	%			\begin{equation}
	%				c_0\sbr{|u^+|_4^4+|v^+|_4^4}\leq \mu_1|u^+|_4^4+\mu_2|v^+|_4^4+2\beta |u^+v^+|_2^2\leq C_0\sbr{|u^+|_4^4+|v^+|_4^4},
	%			\end{equation}
	%			as long as $\beta>-\sqrt{\mu_1\mu_2}$. Then the conclusion follows directly from  $(u,v) \in \mathcal{N}$ and
	%			\begin{equation} \label{f1}
	%				\begin{aligned}
	%					\mathcal{L}(u,v)=\frac{1}{4}\sbr{\mu_1|u^+|_4^4+\mu_2|v^+|_4^4+2\beta |u^+v^+|_2^2}+\frac{\theta_1}{2}|u^+|_2^2+\frac{\theta_2}{2}|v^+|_2^2\geq 0.
	%				\end{aligned}
	%			\end{equation}
	%		}	
	%	\end{remark}
	%\begin{theorem}\label{Theorem-2}
	%	If $(u,v)\in \mathcal{N}$ with $\mathcal{L}(u,v)=\mathcal{C}$, then $(u,v)$ is a positive  least energy  solution of system \eqref{System1}.
	%\end{theorem}
	%Define 	
	%\begin{equation} \label{defi of Lambda}
	%	\Lambda:=\min_{i=1,2}\lbr{\cfrac{\mu_i}{\sbr{\mu_i+\theta_i e^{\lambda_i/\theta_i-1}}^2}}.
	%\end{equation}
	Define
	\begin{equation}
		\varSigma_1:=\lbr{(\lambda,\mu,\theta):\lambda\in\R,\mu>0,\theta>0},
	\end{equation}
	and
	\begin{equation} \label{defi of Lambda}
		\Lambda:=\min_{i=1,2}\lbr{\cfrac{\mu_i}{\sbr{\mu_i+\theta_i e^{\lambda_i/\theta_i-1}}^2}}.
	\end{equation}
	Our first existence result of this paper is the following.
	\begin{theorem}\label{Theorem-1}
		Assume that  $\sbr{\lambda_i, \mu_i, \theta_i}\in \varSigma_1~$ for  $i=1,2$.
		\begin{enumerate}
			\item[(1)] There exists  $\beta_0\in \left( 0,\min\lbr{\mu_1,\mu_2}\right] $ such that system \eqref{System1} has a positive least energy solution  $(u,v)\in \mathcal{N}$ with  $\mathcal{L}(u,v)=\mathcal{C}_{\mathcal{N}}$ for any $\beta \in  \sbr{-\beta_0,0} $.
			\item[(2)] There exists $\beta_1\in \left( 0,\min\lbr{\mu_1,\mu_2}\right] $ such that system \eqref{System1} has a positive least energy solution  $(u,v)\in \mathcal{N}$ with  $\mathcal{L}(u,v)=\mathcal{C}_{\mathcal{N}}$ for any $\beta \in \sbr{0,\beta_1}$.
			\item[(3)] Let $\beta_2$ be the larger root of the equation
		\begin{equation} \label{f54}
			\beta^2-\frac{2}{\Lambda}\beta +\frac{\mu_1+\mu_2}{\Lambda}-\mu_1\mu_2=0,
		\end{equation}
		where $\Lambda$ is defined in \eqref{defi of Lambda}. Then we have $\beta_2>\max\lbr{\mu_1,\mu_2}$, and system \eqref{System1} has a positive least energy solution  $(u,v)\in \mathcal{N}$ with  $\mathcal{L}(u,v)=\mathcal{C}_{\mathcal{N}}$  for any $\beta>\beta_2$.
		\end{enumerate}
	\end{theorem}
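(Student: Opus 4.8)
The plan is to obtain the least energy solution as a minimizer of $\mathcal{L}$ on the constraint set $\mathcal{N}$, and to defeat the loss of compactness caused by the Sobolev-critical quartic terms in $\R^4$ by forcing $\mathcal{C}_{\mathcal{N}}$ to lie strictly below the energy levels at which a minimizing sequence could concentrate a bubble. First I would fix $(u,v)$ with $u^+\not\equiv0$, $v^+\not\equiv0$ and study the two-parameter fibering $\Phi(s,t)=\mathcal{L}(su,tv)$ on $(0,\infty)^2$, as in the scheme of Lin and Wei. The essential new difficulty relative to the homogeneous system of Chen and Zou is that the logarithmic contribution produces a term proportional to $s^{2}\log s^{2}\int_\Omega(u^+)^2$, so $\Phi$ is no longer a polynomial and the classical scaling identities fail. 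I would nonetheless show that the system $\partial_s\Phi=\partial_t\Phi=0$ has a unique solution $(s_0,t_0)\in(0,\infty)^2$ which is the global maximum of $\Phi$: along each ray $s\mapsto\partial_s\Phi$ vanishes exactly once because $s^{2}\log s^{2}$ is negligible against $s^{2}$ near the origin and dominated by the critical term $s^{4}$ at infinity, and the Hessian is negative definite at any interior critical point for $|\beta|$ in the admissible range. This makes $\mathcal{C}_{\mathcal{N}}=\inf_{\mathcal{N}}\mathcal{L}$ well defined and identifies it with a min--max level.

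Next I would show $\mathcal{C}_{\mathcal{N}}>0$ and that every minimizing sequence $(u_n,v_n)$ is bounded in $\mathcal{H}$. Boundedness is exactly where the logarithm must be controlled: evaluating $\mathcal{L}(u_n,v_n)-\tfrac14\mathcal{L}'(u_n,v_n)[(u_n,0)+(0,v_n)]$ cancels the quartic and coupling terms and leaves $\tfrac14\|\nabla u_n\|_2^2+\tfrac14\|\nabla v_n\|_2^2$ plus lower-order pieces; the troublesome integrals $\theta_i\int_\Omega(u^+)^2\log(u^+)^2$ I would absorb using the elementary estimate $s^{2}\log s^{2}\le\varepsilon s^{4}+C_\varepsilon s^{2}$ together with the Sobolev and Poincar\'e inequalities, taking $\varepsilon$ small. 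The same inequalities give the strict positivity of $\mathcal{C}_{\mathcal{N}}$.

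The heart of the argument is the sharp energy estimate. Writing $c_i$ for the least energy of the single equation \eqref{Equation1} and $S$ for the best Sobolev constant, the decoupled baseline is $c_1+c_2$ and the obstructing levels are bubbling configurations whose effective strength is governed by $\bar\mu_i:=\mu_i+\theta_i e^{\lambda_i/\theta_i-1}$; this constant is precisely what one obtains by optimizing the amplitude of a concentrating Aubin--Talenti bubble against the non-homogeneous logarithmic term, and it is the origin of $\Lambda$ in \eqref{defi of Lambda} through the quantity $\mu_i/\bar\mu_i^{2}$, which measures the critical mass $\mu_i\|u_{0,i}\|_4^4/S^2$ carried by the single-equation extremal $u_{0,i}$. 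For $\beta$ near $0$ (parts (1)--(2)) I would take the product of the two single-equation extremals as a comparison family, show $\mathcal{C}_{\mathcal{N}}(\beta)\to c_1+c_2$ as $\beta\to0$, and use the strict single-equation inequalities $c_i<S^2/(4\bar\mu_i)$ to keep $\mathcal{C}_{\mathcal{N}}(\beta)$ below every single-bubble threshold; continuity in $\beta$ then produces the constants $\beta_0,\beta_1\in(0,\min\{\mu_1,\mu_2\}]$. For large $\beta$ (part (3)) I would use a synchronized ansatz $(\sqrt{k_1}\,w,\sqrt{k_2}\,w)$ built on a common concentrating profile $w$; the two Nehari constraints reduce to a $2\times2$ linear system for $(k_1,k_2)$ whose coefficient matrix has diagonal entries $\mu_1,\mu_2$ and off-diagonal entry $\beta$, so that a positive solution exists exactly when the determinant $\mu_1\mu_2-\beta^{2}$ is negative, i.e. $\beta>\max\{\mu_1,\mu_2\}$. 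Minimizing the resulting energy over the profile, and demanding that it fall below the smaller bubbling threshold $\min_i S^2/(4\bar\mu_i)$, collapses after the common factor $S^2$ cancels to the quadratic condition encoded by \eqref{f54}. The bound $\beta_2>\max\{\mu_1,\mu_2\}$ is then a direct check: since $\theta_i>0$ gives $\bar\mu_i>\mu_i$ and hence $1/\Lambda\ge\max\{\mu_1,\mu_2\}$, the factorization $\left(\tfrac1\Lambda-\mu_1\right)\left(\tfrac1\Lambda-\mu_2\right)\ge0$ guarantees real roots, and evaluating the left-hand side of \eqref{f54} at $\beta=\max\{\mu_1,\mu_2\}$ yields $(\mu_1-\mu_2)\bigl(\max\{\mu_1,\mu_2\}-1/\Lambda\bigr)\le0$, so this point lies between the two roots and $\beta_2$ exceeds it. I expect this bubble expansion to be the main obstacle, since the logarithm cannot be scaled away and one must track the $\varepsilon^{2}\log\varepsilon$ corrections in the test-function energy with care.

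Finally, with $\mathcal{C}_{\mathcal{N}}$ pinned strictly below the first bubbling level, a concentration--compactness analysis forces the minimizing sequence to converge strongly to some $(u,v)\in\mathcal{N}$. I would exclude the semitrivial alternatives (one component vanishing weakly while energy escapes in a bubble) by the strict inequality $\mathcal{C}_{\mathcal{N}}<\min\{c_1+S^2/(4\bar\mu_2),\,c_2+S^2/(4\bar\mu_1)\}$, and note that the Nehari constraints bound each component away from $0$, so no component can vanish strongly. The limit is a critical point of $\mathcal{L}$; testing against $(u^-,0)$ and $(0,v^-)$ gives $u,v\ge0$, and the strong maximum principle---valid here because near a zero of $u$ the reaction $\theta_1 u\log u^2$ tends to $0$ and in fact acts with a favorable sign---upgrades this to $u,v>0$, completing the proof.
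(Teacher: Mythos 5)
Your overall architecture (Nehari/fibering set-up, boundedness via $\mathcal{L}-\tfrac14\mathcal{L}'$, energy estimates below concentration thresholds, Br\'ezis--Lieb splitting, exclusion of trivial and semitrivial weak limits, Vazquez-type maximum principle) matches the paper's, but the quantitative heart of your argument rests on a misidentified threshold. The quantum of energy carried by an escaping bubble is $\tfrac14\mu_i^{-1}\mathcal{S}^2$, not $\tfrac14\bar\mu_i^{-1}\mathcal{S}^2$ with $\bar\mu_i=\mu_i+\theta_i e^{\lambda_i/\theta_i-1}$: the logarithmic term is subcritical under concentration (for a bubble $w_n\rightharpoonup 0$ one has $\int_\Omega w_n^2\log w_n^2\to 0$), so the limiting profile solves the unperturbed equation $-\Delta w=\mu_i w^3$ and the Br\'ezis--Lieb identity gives $|\nabla w_n|_2^2=\mu_i|w_n^+|_4^4+o_n(1)$, hence $k_i\ge\mu_i^{-1}\mathcal{S}^2$. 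Consequently the single-equation inequality you need (and the only one provable by the test-function expansion, where the gain from the logarithm is merely $O(\varepsilon^2|\log\varepsilon|)$) is $\mathcal{C}_{\theta_i}<\tfrac14\mu_i^{-1}\mathcal{S}^2$; your claimed $\mathcal{C}_{\theta_i}<\mathcal{S}^2/(4\bar\mu_i)$ is strictly stronger and unjustified. The error is fatal in part (3): the quadratic \eqref{f54} does \emph{not} come from comparing with $\min_i\mathcal{S}^2/(4\bar\mu_i)$, but from the \emph{lower} bound $\mathcal{C}_{\theta_i}\ge\frac{\mu_i\mathcal{S}^2}{4\bar\mu_i^{\,2}}\ge\tfrac14\Lambda\mathcal{S}^2$ (obtained from $|u^+|_4^4\ge\mathcal{S}^2/\bar\mu_i^{\,2}$ on the single-equation Nehari set) combined with the synchronized-bubble upper bound $\mathcal{C}_M<\mathcal{A}=\tfrac14(k+l)\mathcal{S}^2$; the condition $k+l<\Lambda$ is exactly $h(\beta)>0$, i.e.\ $\beta>\beta_2$. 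Your condition $k+l<\min_i\bar\mu_i^{-1}$ is a different (and insufficient) inequality, since $\mu_i/\bar\mu_i^{\,2}<1/\bar\mu_i$. You also never exclude the fully trivial limit $u\equiv v\equiv 0$ with both components bubbling, which requires the separate estimate $\mathcal{C}_{\mathcal{N}}<\mathcal{A}$ against the coupled limit system in $\R^4$.

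The second gap is the natural-constraint problem, which you dispose of by asserting that the two-parameter fibering map has a unique interior critical point with negative-definite Hessian ``for $|\beta|$ in the admissible range.'' For $\beta<0$ the off-diagonal entry $\beta|u^+v^+|_2^2$ of the relevant matrix is not controlled by the diagonal for arbitrary $(u,v)\in\mathcal{H}$, so $\mathcal{N}$ need not be a $C^1$-manifold and Lagrange multipliers cannot be applied directly; this is precisely why the paper restricts to the diagonally dominant set $\mathcal{Q}$ and shows that minimizers in the sublevel $\lbr{\mathcal{L}\le\tfrac12(\mu_1^{-1}+\mu_2^{-1})\mathcal{S}^2}$ automatically lie in $\mathcal{Q}$ when $\beta>-\beta_0$ (this is where $\beta_0$ actually comes from, via the uniform $L^4$ bounds $C_1\le|u^+|_4^4,|v^+|_4^4\le C_2$, not from continuity of $\mathcal{C}_{\mathcal{N}}$ in $\beta$). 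For $\beta>\max\lbr{\mu_1,\mu_2}$ the situation is worse: the matrix \eqref{matrix 2} is indefinite, the critical point of $(s,t)\mapsto\mathcal{L}(su,tv)$ is a saddle rather than a maximum, and direct minimization over $\mathcal{N}$ is not viable; the paper instead runs a mountain-pass argument at the level $\mathcal{C}_M=\inf_{\mathcal{M}}\mathcal{L}$ over the one-parameter Nehari set $\mathcal{M}$ and only a posteriori identifies $\mathcal{C}_M=\mathcal{C}_{\mathcal{N}}$. Finally, a small point: for $\theta_1>0$ the reaction $\theta_1u\log u^2$ is \emph{negative} near a zero of $u$, which is the unfavorable sign for the classical maximum principle; positivity requires the Vazquez criterion with $g(s)=2\theta|s\log s^2|$, not the heuristic you give.
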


   	\begin{remark}
   	{\rm To the best of our knowledge, Theorem \ref{Theorem-1} seems to be the first result devoted to the existence of solutions for the coupled elliptic system with logarithmic terms.  Comparing the existence  results of \cite[Theorem 1.3]{Zou 2012} and Theorem\ref{Theorem-1} above,   we list the conditions in the table below for clarity.
   		%		we find that for the case $\beta<0$, the system \eqref{System1} has  a positive least energy solution  for  $ \lambda_{1},\lambda_{2}\in \sbr{0,\lambda_{1}(\Omega)}$ and all $\beta<0$ if  $\theta_1=\theta_2=0$, while it possesses a positive least energy solution  for all $\lambda_1,\lambda_2 \in \R$ and $\beta\in\sbr{-\beta_0,0} $ if $\theta_1,\theta_2>0$. For the case $\beta>0$ small,  the system \eqref{System1} has  a positive least energy solution  for  $ \lambda_{1},\lambda_{2}\in \sbr{0,\lambda_{1}(\Omega)}$ and all $\beta<0$ if  $\theta_1=\theta_2=0$,
   		
   		\begin{table}[H]
   			\vspace{-1.0em}
   			\begin{center}
   				\caption{ The existence results under different conditions for $\mu_i>0$, $i=1,2$.}
   				\vskip0.1in
   				\begin{tabular}{|c|c|c|c|c|c|}
   					\hline
   					\multicolumn{2}{|c|}{$\beta<0$} &	\multicolumn{2}{|c|}{$\beta>0$ small} & \multicolumn{2}{|c|}{$\beta>0$ large}\\
   					\hline
   					$\theta_i=0$ & $\theta_i>0$&$\theta_i=0$ & $\theta_i>0$&$\theta_i=0$ & $\theta_i>0$\\
   					\hline
   					$ \lambda_{i}\in \sbr{0,\lambda_{1}(\Omega)}$  & $\lambda_i\in \R$ & $ \lambda_{i}\in \sbr{0,\lambda_{1}(\Omega)}$ &$\lambda_i \in \R$ &	$ \lambda_{i}\in \sbr{0,\lambda_{1}(\Omega)}$ & $ \lambda_{i}\in \R$ \\
   					\hline
   					$\beta<0$ &$\beta\in  \sbr{-\beta_0,0} $ &$\beta\in \sbr{0,\beta_*}$ & $\beta\in\sbr{0,\beta_1} $ &$\beta>\beta^*$ &$\beta>\beta_2$\\
   					\hline
   				\end{tabular}
   			\end{center}
   			\vspace{-2.0em}
   		\end{table}
   		Here,   $\beta_0,\beta_1,\beta_2, \beta_*, \beta^*$  are some prescribed constants.	
   		Therefore, we observe that if $\theta_i>0$, the dominant contribution to the existence of the positive solution comes from the logarithmic terms $\theta_{i} u \log u^2$ for  $  i=1,2$, rather than  $\lambda_{i} u$. 	Therefore, the existence results for the case $\theta_{i}>0$ is quite different from that for the case $\theta_{i}=0$, $i=1,2$.
   		
   		On the other hand,	as in \cite{Zou 2012,Sirakov 2007}, we will use the Nehari manifold approach to	obtain the  constraint critical points on $\mathcal{N}$. But   the constraint critical point on $\mathcal{N}$ is not always a free critical point on $\mathcal{H}$.  For $\theta_{i}=0$,   the Nehari set is a natural constraint if  $-\infty<\beta<\sqrt{\mu_1\mu_2}$, as shown in \cite[Proposition 1.1]{Sirakov 2007}.
   		As we will see, the case $\theta_{i}>0$ is different.  For the case $\beta>0$ small, we prove that the Nehari set $\mathcal{N}$ is a natural constraint, see Proposition \ref{Lemma natural constraint} ahead. However, for the case $\beta<0$, because the sign of the logarithmic term is uncertain,  the Nehari set $\mathcal{N}$ may not be  a $C^1$-manifold, and  we cannot use the Lagrange multipliers rule to show that  the Nehari set is a natural constraint.  To overcome these difficulties, we narrow the range of $\beta$ to    $ \sbr{-\beta_0,0}$ and show that we can find the free critical points on a special set, see Proposition \ref{Lemma natural constraint2}  and Lemma \ref{Lemma natural constraint2} ahead.

   		%			\begin{equation}
   		%				\mu_1|u^+|_4^4+  \beta |u^+v^+|_2^2+\theta_1|u^+|_2^2 = |\nabla u|_2^2-\theta_1\int_{\Omega}(u^+)^2\log (e^{\frac{\lambda_{1}}{\theta_1}-1} u^+)^2.
   		%			\end{equation}
   		%	For the case $\beta<0$, the appearance of $\beta_0$  seems to be surprising.
   	}
   \end{remark}

   \begin{remark}
   	{\rm
   		Because of the lack of compactness of Sobolev embedding  $H_0^1(\Omega) \hookrightarrow L^{4}(\Omega)$,  it  is  difficult to prove the existence  of fully nontrivial solutions to the system \eqref{System1}. First we consider   the two cases   $\beta>0$ small and $\beta<0$. When $\theta_{i}=0$, Chen and Zou \cite[Lemma 5.1]{Zou 2012} obtained an existence result by  comparing the least energy level to \eqref{System1} with that
   		of limit system (see \eqref{System2} ahead) and scalar equations. However, the  presence of the logarithmic terms bring a lot of difficulties and make the comparison much more complicated,   so we need to  make more careful calculations, see Proposition \ref{Lemma energy estimate-1} and \ref{Lemma energy estimate-2} for details.
   		
   		For the case $\beta>0$ large,  we   follow the strategies in  \cite{Zou 2012} and  use  the mountain pass argument to prove (3) of Theorem \ref{Theorem-1}. One of the most important steps in their approach  is to compare the least energy level with that of Br\'ezis-Nirenberg  problem, as shown in \cite[Lemma 5.3]{Zou 2012}.  This comparison relies on a lower bound on the energy of Br\'ezis-Nirenberg  problem.  Inspired by that, we establish a lower bound on the energy of the single equation \eqref{Equation1}, as presented in Proposition \ref{Lemma bound of single energy}. Therefore,  we are able to establish an energy estimate comparing the least energy level $\mathcal{C}_{\mathcal{N}}$ with  that of the single equation \eqref{Equation1} for   $\beta>0$ large, see Proposition \ref{Lemma energy estimate-3} for details.   }
   \end{remark}

   \begin{remark}
   	{\rm
   		We mention that $\beta_0$, $\beta_1$ and  $\beta_2$  depend   on   $\lambda_{i}$, $\mu_i$, $\theta_i$ for $i=1,2$ and the accurate definitions of  $\beta_0$ and $\beta_1$  are given in   \eqref{Defi of beta0} and  	\eqref{Defi of beta1}, respectively.  	However, we are uncertain whether the ranges for $\beta$ in Theorem  \ref{Theorem-1} are optimal. This is  an open and interesting problem.
   	}
   \end{remark}

  \vskip 0.1in
    Theorem \ref{Theorem-1}  established  the existence   of positive solution for the case    $\theta_{i}>0$ for $i=1,2$. A natural question is whether or not the system \eqref{System1} has positive  solution (or nonnegative solution) when  both $\theta_{1}$ and $\theta_{2}$ are negative.  The following results Theorem \ref{Theorem-2} \ref{least ene solu} and  \ref{Theorem-5} give   affirmative answers to this question. Different from the case $\theta_{i}>0$,  the situations become thorny when $\theta_{i}<0$, $i=1,2$, since it is not easy to obtain the  Palais-Smale sequence in $\mathcal{N}$ . 	 The negative $\theta_1$ and $\theta_2$ change the geometry strucure of $\mathcal{L}$. More precisely, $(0,0)$ is  no longer a local minima, and we can find positive $(u,v) \in \mathcal{H}$ which is a local minima with negative energy level.
    
      In what follows, we consider the following  special sets,
     \begin{align}
     	A_1 := \{(\lambda_{1},\mu_{1},\theta_{1};\lambda_{2},\mu_{2},\theta_{2}):&\lambda_{1},\lambda_{2} \in [0,\lambda_{1}(\Omega)), \mu_{1},\mu_2 > 0, \theta_{1},\theta_{2} < 0, \\
     	&\frac{(\min\{\lambda_1(\Omega)-\lambda_1,\lambda_1(\Omega)-\lambda_2\})^2}{\lambda_1(\Omega)^2\max\{\mu_1,\mu_2\}}S^2 + 2(\theta_1+\theta_2)|\Omega| > 0\},
     \end{align}
     \begin{align}
     	A_2 := \{(\lambda_{1},\mu_{1},\theta_{1};\lambda_{2},\mu_{2},\theta_{2}):&\lambda_{1} \in [0,\lambda_{1}(\Omega)), \lambda_{2} \in \mathbb{R}, \mu_{1},\mu_2 > 0, \theta_{1},\theta_{2} < 0, \\
     	&\frac{(\lambda_1(\Omega)-\lambda_1)^2}{\lambda_1(\Omega)^2\max\{\mu_1,\mu_2\}}S^2 +  2(\theta_1+\theta_2e^{-\frac{\lambda_{2}}{\theta_{2}}})|\Omega| > 0\},
     \end{align}
     \begin{align}
     	A_3 := \{(\lambda_{1},\mu_{1},\theta_{1};\lambda_{2},\mu_{2},\theta_{2}):&\lambda_{1}, \lambda_{2} \in \mathbb{R}, \mu_{1},\mu_2 > 0, \theta_{1},\theta_{2} < 0, \\
     	&\frac{1}{\max\{\mu_1,\mu_2\}}S^2 + 2(\theta_1e^{-\frac{\lambda_{1}}{\theta_{1}}}+\theta_2e^{-\frac{\lambda_{2}}{\theta_{2}}})|\Omega| > 0\}.
     \end{align}
Here,  $\mathcal{S}$ denotes the Sobolev best constant of $\mathcal{D}^{1,2}(\R^4)\hookrightarrow L^4(\R^4)$,
\begin{equation}\label{sobolev constant}
	\mathcal{S}=\inf_{u \in \mathcal{D}^{1,2}(\R^4) \setminus \lbr{0} } \cfrac{\int_{\R^4} |\nabla u|^2 }{\sbr{\int_{\R^4} |u|^4}^{\frac{1}{2}}},
\end{equation}
where $\mathcal{D}^{1,2}(\R^4)=\lbr{u\in L^2(\R^4): |\nabla u| \in L^2(\R^4)}$ with norm $\left\|u \right\|_{\mathcal{D}^{1,2}}:=\sbr{\int_{\R^4}|\nabla u|^2 }^{\frac{1}{2}} $.
\medbreak
    Then the existence result   in this aspect  can be stated as follows.
     \begin{theorem} \label{Theorem-2}
     	Define
     	$$
     	\mathcal{C}_\rho := \inf_{(u,v)\in B_\rho}\mathcal{L}(u,v),
     	$$
     	where $B_r := \{(u,v)\in \mathcal{H}: \sqrt{|\nabla u|_2^2+|\nabla v|_2^2} < r\}$ and $\rho$ will be given by Lemma \ref{lm}. When $\beta < 0$, we assume that one of the following holds:
     	\begin{itemize}
     		\item[(i)] $(\lambda_1,\mu_1,\theta_1; \lambda_2,\mu_2,\theta_2) \in A_1$,
     		\item[(ii)] $(\lambda_1,\mu_1,\theta_1; \lambda_2,\mu_2,\theta_2) \in A_2$,
     		\item[(iii)] $(\lambda_1,\mu_1,\theta_1; \lambda_2,\mu_2,\theta_2) \in A_3$;
     	\end{itemize}
     	when $\beta > 0$, we assume that there exists $\epsilon > 0$ such that one of the following holds:
     	\begin{itemize}
     		\item[(iv)] $(\lambda_1,\mu_1+\beta\epsilon,\theta_1; \lambda_2,\mu_2+\frac{\beta}{\epsilon},\theta_2) \in A_1$,
     		\item[(v)] $(\lambda_1,\mu_1+\beta\epsilon,\theta_1; \lambda_2,\mu_2+\frac{\beta}{\epsilon},\theta_2) \in A_2$,
     		\item[(vi)] $(\lambda_1,\mu_1+\beta\epsilon,\theta_1; \lambda_2,\mu_2+\frac{\beta}{\epsilon},\theta_2) \in A_3$.
     	\end{itemize}
     Then system \eqref{System1} has a positive solution $(\tilde{u},\tilde{v})$ such that $\mathcal{L}(\tilde{u},\tilde{v}) = \mathcal{C}_\rho<0$, which is a local minima.
     \end{theorem}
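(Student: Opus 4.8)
The plan is to obtain $(\tilde u,\tilde v)$ as a minimizer of $\mathcal{L}$ over the ball $B_\rho$, using that the negative logarithmic terms drag $\mathcal{C}_\rho$ strictly below $0$ while the smallness of $\rho$ (encoded in $A_1,A_2,A_3$) keeps the level below the threshold at which compactness is lost. First I would establish the geometry behind the choice of $\rho$ in Lemma \ref{lm}. The coupling is removed at the level of lower bounds: for $\beta<0$ the term $-\frac{\beta}{2}\int_\Omega|u^+|^2|v^+|^2\ge 0$ only raises $\mathcal{L}$, so $\mathcal{L}(u,v)$ dominates the decoupled functional with the original data; for $\beta>0$ the pointwise Young inequality $|u^+|^2|v^+|^2\le \frac{\epsilon}{2}|u^+|^4+\frac{1}{2\epsilon}|v^+|^4$ bounds $\mathcal{L}$ below by the decoupled functional with $\mu_1,\mu_2$ replaced by $\mu_1+\beta\epsilon,\ \mu_2+\frac{\beta}{\epsilon}$, which is exactly why (iv)--(vi) carry the shifted coefficients. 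In either case I would combine the Poincaré inequality (when $\lambda_i\in[0,\lambda_1(\Omega))$) or the sharp pointwise estimate $-\frac{\lambda_i}{2}s^2-\frac{\theta_i}{2}s^2(\log s^2-1)\ge \frac{\theta_i}{2}e^{-\lambda_i/\theta_i}$ (when $\lambda_i\in\R$) with the Sobolev inequality $\int_\Omega|u^+|^4\le S^{-2}\|\nabla u\|_2^4$ and the elementary bound $s^2(\log s^2-1)\ge -1$, obtaining $\mathcal{L}(u,v)\ge g\big(\|\nabla u\|_2^2+\|\nabla v\|_2^2\big)$ with $g(t)=mt-Mt^2+c$. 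The defining inequalities of $A_1,A_2,A_3$ are precisely $\max_t g=\frac{m^2}{4M}+c>0$, which furnishes a radius $\rho$ (with $\rho^2<2S^2/\max\{\mu_1,\mu_2\}$) such that $\mathcal{L}\ge g(\rho^2)>0$ on $\partial B_\rho$. Strict negativity of $\mathcal{C}_\rho$ follows by testing with $(t\phi,t\phi)$, $\phi\in C_c^\infty(\Omega)$, $\phi\ge0$: the logarithmic contribution $-\frac{\theta_1+\theta_2}{2}t^2\log t^2\,\|\phi\|_2^2$ is negative of order $t^2|\log t^2|$ and dominates the $O(t^2)$ and $O(t^4)$ terms as $t\to 0^+$.

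Next I would run the direct method on $B_\rho$. Given a minimizing sequence $(u_n,v_n)\subset B_\rho$, replacing it by $(u_n^+,v_n^+)$ does not increase $\mathcal{L}$ and keeps it in $B_\rho$ (because $\|\nabla u^\pm\|_2\le\|\nabla u\|_2$ while the nonlinear terms see only $u^+$), so I may assume $u_n,v_n\ge 0$. Boundedness gives $(u_n,v_n)\rightharpoonup(\tilde u,\tilde v)$ in $\mathcal{H}$, strongly in $L^p$ for $p<4$ and a.e. The subcritical $\lambda$-terms and the logarithmic terms pass to the limit by a Vitali argument, using the bound $|s^2\log s^2|\le C_\delta(s^{2-\delta}+s^{2+\delta})$ with $2+\delta<4$ together with the $L^4$-boundedness of $u_n,v_n$.

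The one genuinely delicate point, which I expect to be the main obstacle, is the critical term $\int_\Omega|u^+|^4$, only weakly upper semicontinuous, so a priori $(u_n,v_n)$ could concentrate. To exclude this I would apply the Brezis--Lieb lemma to $w_n:=u_n-\tilde u$ and $z_n:=v_n-\tilde v$, writing $\|\nabla u_n\|_2^2=\|\nabla\tilde u\|_2^2+\|\nabla w_n\|_2^2+o(1)$ and $\int_\Omega u_n^4=\int_\Omega\tilde u^4+\int_\Omega w_n^4+o(1)$ (and likewise for $v$), while the lower-order terms localize on $(\tilde u,\tilde v)$. This yields $\mathcal{C}_\rho=\mathcal{L}(\tilde u,\tilde v)+\mathcal{E}(w_n,z_n)+o(1)$, where the bubble energy satisfies $\mathcal{E}(w_n,z_n)\ge \frac12\|\nabla w_n\|_2^2-\frac{\mu_1}{4S^2}\|\nabla w_n\|_2^4+\frac12\|\nabla z_n\|_2^2-\frac{\mu_2}{4S^2}\|\nabla z_n\|_2^4\ge 0$, the cross term contributing nonnegatively when $\beta<0$ (by Fatou) and being absorbed by the same Young inequality when $\beta>0$, and the inequality being valid because $\|\nabla w_n\|_2^2,\|\nabla z_n\|_2^2\le\rho^2<2S^2/\max\{\mu_1,\mu_2\}$. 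Since $(\tilde u,\tilde v)\in\overline{B_\rho}$ forces $\mathcal{L}(\tilde u,\tilde v)\ge \mathcal{C}_\rho$, the identity compels $\mathcal{E}(w_n,z_n)\to 0$, hence $\|\nabla w_n\|_2,\|\nabla z_n\|_2\to 0$; thus $(u_n,v_n)\to(\tilde u,\tilde v)$ strongly and $\mathcal{L}(\tilde u,\tilde v)=\mathcal{C}_\rho$.

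Finally I would upgrade the minimizer to a positive solution. Because $\mathcal{L}(\tilde u,\tilde v)=\mathcal{C}_\rho<0\le\inf_{\partial B_\rho}\mathcal{L}$, the minimizer lies in the open ball, so it is a free local minimum with $\mathcal{L}'(\tilde u,\tilde v)=0$; testing with $\tilde u^-,\tilde v^-$ confirms $\tilde u,\tilde v\ge0$ solve \eqref{System1}. To rule out a semi-trivial minimizer, suppose $\tilde u\equiv0$: adding a small positive bump $\epsilon\phi$ in the first slot keeps $(\epsilon\phi,\tilde v)\in B_\rho$ while changing the energy by $\frac{|\theta_1|}{2}\epsilon^2|\log\epsilon^2|\,\|\phi\|_2^2+O(\epsilon^2)<0$ for small $\epsilon$, contradicting minimality; hence $\tilde u\not\equiv0$ and symmetrically $\tilde v\not\equiv0$. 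Elliptic regularity and the strong maximum principle, with the logarithmic nonlinearity treated as in \cite{Deng-He-Pan=Arxiv=2022}, then give $\tilde u,\tilde v>0$ in $\Omega$. The crux of the argument is the compactness step: the negative logarithmic terms are exactly what makes $\mathcal{C}_\rho<0$, and the smallness of $\rho$ dictated by $A_1,A_2,A_3$ is exactly what keeps the level below the bubbling threshold.
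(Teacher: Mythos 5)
Your proposal is correct and follows the paper's overall skeleton (the coercivity geometry of Lemma \ref{lm} with the sign trick for $\beta<0$ and the Young-inequality shift $\mu_i\mapsto\mu_1+\beta\epsilon,\ \mu_2+\beta/\epsilon$ for $\beta>0$; strict negativity of $\mathcal{C}_\rho$ via the $t^2\log t^2$ term; Br\'ezis--Lieb splitting; the small-bump perturbation to exclude semi-trivial minimizers; regularity as in Theorem \ref{Theorem-1}), but the compactness step is handled by a genuinely different mechanism. The paper first upgrades the minimizing sequence to a Palais--Smale sequence via Ekeland's variational principle; the condition $\mathcal{L}'(u_n,v_n)\to0$ then yields $|\nabla w_n|_2^2=\mu_1|w_n^+|_4^4+o_n(1)$ and the identity $\mathcal{L}(u_n,v_n)=\mathcal{L}(\tilde u,\tilde v)+\tfrac14(k_1+k_2)+o_n(1)$ with $k_1,k_2\ge0$, from which $k_1=k_2=0$ and $\mathcal{L}'(\tilde u,\tilde v)=0$ follow. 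You instead run the pure direct method and bound the bubble energy from below by $\tfrac12\|\nabla w_n\|_2^2-\tfrac{\mu_1}{4S^2}\|\nabla w_n\|_2^4+\cdots\ge\tfrac14(\|\nabla w_n\|_2^2+\|\nabla z_n\|_2^2)$, which is valid precisely because the $\rho$ produced by Lemma \ref{lm} satisfies $\rho^2\le S^2/\max\{\mu_1,\mu_2\}$ (resp.\ with the shifted coefficients when $\beta>0$); criticality of $(\tilde u,\tilde v)$ is then recovered a posteriori from its being an interior minimum. Your route avoids Ekeland entirely at the cost of this quantitative check on $\rho$, which you correctly verify; note that the exact splitting $\mathcal{L}(u_n,v_n)=\mathcal{L}(\tilde u,\tilde v)+\mathcal{E}(w_n,z_n)+o_n(1)$ also needs the vector Br\'ezis--Lieb identity for the cross term $\int_\Omega|u_n^+|^2|v_n^+|^2$ (the paper's formula \eqref{f9}), not just Fatou. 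Two cosmetic slips: in the semi-triviality argument the dominant perturbation term should read $-\tfrac{|\theta_1|}{2}\epsilon^2|\log\epsilon^2|\,\|\phi\|_2^2<0$ (you wrote it with a positive sign while asserting negativity), and the truncation step $(u_n,v_n)\mapsto(u_n^+,v_n^+)$ is harmless but unnecessary, since $u,v\ge0$ already follows from testing $\mathcal{L}'(\tilde u,\tilde v)=0$ against $(\tilde u^-,0)$ and $(0,\tilde v^-)$, as you also observe.
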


     Then we will show the existence of a nonnegative solution with negative energy level reaching the minimum among all critical points.

     \begin{theorem} \label{least ene solu}
     		Define
     	\begin{equation}\label{Defi of C_K}
     			\mathcal{C}_K := \inf_{(u,v)\in K}\mathcal{L}(u,v),
     	\end{equation}
     	where
     	$$K = \{(u,v) \in \mathcal{H}: \mathcal{L}'(u,v) = 0\}.$$
     	Under the hypotheses of Theorem \ref{Theorem-2}. We further assume that
     	\begin{equation}
     		2\min\{\theta_1,\theta_2\} \geq -\lambda_{1}(\Omega) ~~  \text{ or }~~\beta > 0~~\text{ or }~~ \beta \in (-\sqrt{\mu_1\mu_2},0).
     	\end{equation}
Then the system \eqref{System1} possesses a nonnegative solution $(\hat{u},\hat{v}) \neq (0,0)$ such that $\mathcal{L}(\hat{u},\hat{v}) = \mathcal{C}_K<0$.
     \end{theorem}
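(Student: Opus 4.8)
The plan is to obtain $(\hat u,\hat v)$ as a minimizer of $\mathcal L$ over the \emph{whole} critical set $K$, exploiting that the $u^+,v^+$ truncations force every critical point to be nonnegative. First I would record this: testing $\mathcal L'(u,v)=0$ against $(u^-,0)$ and $(0,v^-)$ annihilates all the nonlinear terms (each carries a factor $u^+$ or $v^+$), leaving $\int_\Omega|\nabla u^-|^2=\int_\Omega|\nabla v^-|^2=0$, so $u,v\ge 0$ and every element of $K$ solves \eqref{System1}. Since the local minimizer of Theorem \ref{Theorem-2} belongs to $K$, we already know $\mathcal C_K\le\mathcal C_\rho<0$, so the infimum is negative and $(0,0)$ does not realize it.

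The technical engine is a coupling-free energy identity on $K$. Subtracting $\tfrac14\big[\mathcal L'(u,v)(u,0)+\mathcal L'(u,v)(0,v)\big]$ from $\mathcal L(u,v)$ cancels the quartic terms and, because $\int_\Omega u^2v^2$ enters both Nehari relations symmetrically, also the coupling, giving for every $(u,v)\in K$
\[
\mathcal L(u,v)=\frac14\int_\Omega|\nabla u|^2+\frac14\int_\Omega|\nabla v|^2+\frac{2\theta_1-\lambda_1}{4}\int_\Omega u^2+\frac{2\theta_2-\lambda_2}{4}\int_\Omega v^2-\frac{\theta_1}{4}\int_\Omega u^2\log u^2-\frac{\theta_2}{4}\int_\Omega v^2\log v^2 .
\]
Since $\theta_i<0$ the logarithmic coefficients are positive, and Jensen's inequality $\int_\Omega w^2\log w^2\ge\big(\int_\Omega w^2\big)\log\big(\int_\Omega w^2/|\Omega|\big)$ bounds the lower-order part from below. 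Together with the trichotomy in the hypothesis this yields coercivity $\mathcal L(u,v)\ge\frac14\int_\Omega(|\nabla u|^2+|\nabla v|^2)-C$ on $K$: for $2\min\{\theta_1,\theta_2\}\ge-\lambda_1(\Omega)$ one combines the identity above with the Poincar\'e inequality, while for $\beta>0$ or $\beta\in(-\sqrt{\mu_1\mu_2},0)$ one instead uses the companion identity $\mathcal L=\frac{\mu_1}{4}\int_\Omega u^4+\frac{\mu_2}{4}\int_\Omega v^4+\frac{\theta_1}{2}\int_\Omega u^2+\frac{\theta_2}{2}\int_\Omega v^2+\frac{\beta}{2}\int_\Omega u^2v^2$, in which the positive quartic terms are reinforced by the coupling (when $\beta>0$) or dominate it (when $|\beta|<\sqrt{\mu_1\mu_2}$). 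Hence $\mathcal C_K>-\infty$ and any minimizing sequence $(u_n,v_n)\subset K$ is bounded in $\mathcal H$.

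I would then extract $(u_n,v_n)\rightharpoonup(\hat u,\hat v)$ weakly in $\mathcal H$, strongly in $L^p(\Omega)^2$ for $p<4$ and a.e., with $\hat u,\hat v\ge0$. The heart of the argument, and the step I expect to be the main obstacle, is to pass to the limit in the weak formulation of \eqref{System1} and conclude $(\hat u,\hat v)\in K$: the cubic critical terms are handled by observing that $u_n^3$ is bounded in $L^{4/3}$ and converges a.e., hence $u_n^3\rightharpoonup\hat u^3$ in $L^{4/3}$ (and likewise the coupling $u_nv_n^2$), while the logarithmic terms $u_n\log u_n^2$ have strictly subcritical growth and converge via the $L^p$-convergence together with dominated convergence. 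This is delicate precisely because the critical exponent blocks strong $H^1_0$-convergence a priori and the logarithm is neither power-like nor Lipschitz near $0$ and $\infty$.

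Finally I would establish nontriviality and strong convergence. If $(\hat u,\hat v)=(0,0)$, compactness of the embedding sends $\int_\Omega u_n^2,\int_\Omega v_n^2$ and $\int_\Omega u_n^2\log u_n^2,\int_\Omega v_n^2\log v_n^2$ to $0$, so the coupling-free identity gives $\mathcal L(u_n,v_n)=\frac14\int_\Omega(|\nabla u_n|^2+|\nabla v_n|^2)+o(1)\ge o(1)$, contradicting $\mathcal C_K<0$; hence $(\hat u,\hat v)\neq(0,0)$. Writing $u_n=\hat u+\omega_n$, $v_n=\hat v+\zeta_n$ with $\omega_n,\zeta_n\rightharpoonup0$, the orthogonal splitting of the Dirichlet norms together with the strong convergence of all remaining terms in the coupling-free identity gives $\mathcal L(u_n,v_n)=\mathcal L(\hat u,\hat v)+\frac14\int_\Omega(|\nabla\omega_n|^2+|\nabla\zeta_n|^2)+o(1)$; since $(\hat u,\hat v)\in K$ forces $\mathcal L(\hat u,\hat v)\ge\mathcal C_K=\lim\mathcal L(u_n,v_n)$, the remainder must vanish, so $(u_n,v_n)\to(\hat u,\hat v)$ strongly and $\mathcal L(\hat u,\hat v)=\mathcal C_K<0$. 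The decisive structural point throughout is that the coupling-free identity removes the critical quartic terms from the energy, converting the critical-growth compactness difficulty into mere weak lower semicontinuity of the Dirichlet energy.
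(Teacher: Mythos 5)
Your proposal is correct and follows essentially the same route as the paper: the lower bound $\mathcal{C}_K>-\infty$ via the identities $\mathcal{L}-\tfrac14\mathcal{L}'(u,v)(u,v)$ (your ``coupling-free identity'') and $\mathcal{L}-\tfrac12\mathcal{L}'(u,v)(u,v)$ split over the same three parameter cases, then a minimizing sequence in $K$, boundedness, weak limit in $K$, and the energy splitting forcing the gradient remainders to vanish so that $\mathcal{L}(\hat u,\hat v)=\mathcal{C}_K<0$ and hence $(\hat u,\hat v)\neq(0,0)$. The only loose step is your Case-1 justification ``Jensen plus Poincar\'e'': extracting $\tfrac{2\theta_1-\lambda_1}{4}\int u^2$ and estimating it by Poincar\'e can leave a negative coefficient on $\int|\nabla u|^2$ when $\lambda_1>0$; the paper instead folds the $\lambda$-term into the logarithm as $-\tfrac{\theta_1}{4}\int (u^+)^2\log\bigl(e^{\lambda_1/\theta_1}(u^+)^2\bigr)$ (bounded below by a constant via $t\log t\geq -e^{-1}$) and pairs only $\tfrac{\theta_1}{2}|u^+|_2^2$ with $\tfrac14|\nabla u|_2^2$ through Poincar\'e, which is where the hypothesis $2\min\{\theta_1,\theta_2\}\geq-\lambda_1(\Omega)$ enters.
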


     \begin{remark}
     	{\rm (1) It is an interesting question whether $\mathcal{C}_\rho = \mathcal{C}_K$ or not.
     	
     	(2) In Theorem \ref{least ene solu}, we don't know that $(u,v)$ is a least energy solution or not since we can not prove that $(u,v)$ is fully nontrivial. We leave it open in this paper.
     	
%     	(3) In Appendix \ref{app moun-pass geo}, we show that there is a mountain pass geometry. Once someone provide a suitable estimate of the mountain pass level, there is a solution of mountain pass type.
     	
     	(3) In Appendix \ref{single equation}, we address the single equation \eqref{Equation1} and   prove the existence of a positive local minima and the positive least energy solution with negative energy level, this improves the result of \cite[Theorem 1.3]{Deng-He-Pan=Arxiv=2022}.}
     \end{remark}

 The following theorem shows that there exists another nonnegative solution of system \eqref{System1}.
	\begin{theorem}\label{Theorem-5}

Under the hypotheses of Theorem \ref{Theorem-2}. Assume that  $\beta\in \sbr{-\infty,0}\cup \sbr{ 0,\min\lbr{\mu_1,\mu_2}}  \cup  \sbr{\max\lbr{\mu_1,\mu_2},+\infty} $, and  further
	\begin{equation}
		\frac{32 e^{{\lambda_{i}}/{\theta_{i}}}}{\mu_{i}R_{\max}^2}<1, \text{ with } \ \ R_{\max}:=\sup\lbr{r>0:\exists x\in \Omega, \text{ s.t. } B_r(x)\subset \Omega}, ~i=1,2.
	\end{equation}
	Then  the system \eqref{System1} possesses a nonnegative  solution $(\bar{u },\bar{v})\neq (0,0)$.
\end{theorem}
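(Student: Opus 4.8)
The plan is to produce the extra solution as a mountain-pass critical point lying above the local minimum supplied by Theorem \ref{Theorem-2}. Under the present hypotheses $\theta_1,\theta_2<0$, so the logarithmic term dominates near the origin in every direction: one checks that $(0,0)$ is in fact a strict local maximum of $\mathcal{L}$, while Theorem \ref{Theorem-2} provides a local minimum $(\tilde u,\tilde v)\in B_\rho$ with $\mathcal{L}(\tilde u,\tilde v)=\mathcal{C}_\rho<0$, and the critical terms $-\frac{\mu_i}{4}\int|u^+|^4$ force $\mathcal{L}\to-\infty$ along suitable rays. This is exactly a mountain-pass landscape. Using Lemma \ref{lm} and Theorem \ref{Theorem-2}, the ball $B_\rho$ is a potential well: $\inf_{B_\rho}\mathcal{L}=\mathcal{C}_\rho$ is attained at the interior point $(\tilde u,\tilde v)$ with $\inf_{\partial B_\rho}\mathcal{L}>\mathcal{C}_\rho$. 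First I would exhibit a point $e\notin\overline{B_\rho}$ with $\mathcal{L}(e)<\mathcal{C}_\rho$ by scaling a fixed nonnegative profile $(\varphi,\psi)$ (for $\beta<0$ a one-component profile $(\varphi,0)$ suffices), since the quartic terms beat the $t^2\log t$ growth of the logarithmic terms. Setting $c:=\inf_{\gamma\in\Gamma}\max_{t\in[0,1]}\mathcal{L}(\gamma(t))$ over paths $\gamma$ joining $(\tilde u,\tilde v)$ to $e$, every such path meets $\partial B_\rho$, whence $c\ge\inf_{\partial B_\rho}\mathcal{L}>\mathcal{C}_\rho$.

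For compactness I would first show that any Palais--Smale sequence at level $c$ is bounded, by examining $\mathcal{L}-\frac14\mathcal{L}'$: the quartic and coupling terms cancel, the surviving logarithmic contribution $-\frac{\theta_i}{4}\int(u^+)^2\log(u^+)^2$ is bounded below (its negative part is controlled since $s^2\log s^2\ge-1/e$), and coercivity in $\|\nabla u\|_2^2+\|\nabla v\|_2^2$ is recovered. A concentration--compactness/blow-up analysis then shows that any loss of compactness splits off a bubble solving the limiting scalar problem $-\Delta w=\mu_i w^3$ on $\R^4$, of energy $\frac{\mathcal{S}^2}{4\mu_i}$. Here the restriction $\beta\in(-\infty,0)\cup(0,\min\{\mu_1,\mu_2\})\cup(\max\{\mu_1,\mu_2\},+\infty)$ is used to rule out a \emph{vector} bubble for the limiting coupled system with energy below $\frac{\mathcal{S}^2}{4\max\{\mu_1,\mu_2\}}$, so that the first noncompactness level is governed by single-component bubbles. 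Consequently the local Palais--Smale condition holds strictly below $\mathcal{C}_\rho+\frac{\mathcal{S}^2}{4\max\{\mu_1,\mu_2\}}$, and it suffices to prove the strict bound $c<\mathcal{C}_\rho+\frac{\mathcal{S}^2}{4\max\{\mu_1,\mu_2\}}$.

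The hard part is this energy estimate, and it is where the hypothesis $\frac{32e^{\lambda_i/\theta_i}}{\mu_i R_{\max}^2}<1$ ($i=1,2$) enters. I would test with $(\tilde u,\tilde v)+s(\Phi_\varepsilon,0)$ (and the analogous insertion in the second slot), where $\Phi_\varepsilon$ is the four-dimensional Aubin--Talenti instanton for $\mathcal{S}$, truncated to a ball $B_{R_{\max}}(x_0)\subset\Omega$, and maximize over $s>0$. In dimension four the leading term is $\frac{\mathcal{S}^2}{4\mu_i}$, the cross terms with $(\tilde u,\tilde v)$ being negligible as the instanton concentrates; the delicate point is the logarithmic contribution, whose naive $\varepsilon^2(\log\varepsilon)^2$ pieces cancel, leaving a correction of order $\varepsilon^2\log(1/\varepsilon)$ that competes with the truncation loss of order $\varepsilon^2/R_{\max}^2$ and with the linear term. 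Crucially I would \emph{not} let $\varepsilon\to0$: optimizing the resulting expression in $\varepsilon$ yields an optimal concentration scale of order $e^{\lambda_i/(2\theta_i)}$, and the condition $R_{\max}^2>\frac{32e^{\lambda_i/\theta_i}}{\mu_i}$ is precisely what guarantees that this scale fits inside the inradius ball while still driving $\max_s\mathcal{L}$ strictly below the threshold. This scale-matching---reconciling the logarithmic correction, the truncation error, and the linear term at a fixed (nonvanishing) scale---is the main obstacle and the technical heart of the proof, in the spirit of the sharp four-dimensional Br\'ezis--Nirenberg estimates.

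Finally, the strict bound delivers a Palais--Smale sequence at level $c$ converging to a critical point $(\bar u,\bar v)$ with $\mathcal{L}(\bar u,\bar v)=c$. Testing $\mathcal{L}'(\bar u,\bar v)=0$ against $\bar u^-$ and $\bar v^-$, and using that the nonlinearity carries a factor $u^+$ (so it vanishes at $u^+=0$), gives $\|\nabla\bar u^-\|_2^2=\|\nabla\bar v^-\|_2^2=0$, hence $\bar u,\bar v\ge0$; thus $(\bar u,\bar v)$ is a nonnegative solution of \eqref{System1}. It is nontrivial: the separation $c>\mathcal{C}_\rho=\mathcal{L}(\tilde u,\tilde v)$ distinguishes it from the local minimum, and since $(0,0)$ is a strict local maximum of $\mathcal{L}$ it cannot be the mountain-pass saddle, so $(\bar u,\bar v)\neq(0,0)$.
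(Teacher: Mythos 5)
There is a genuine gap, and it sits exactly at the point the paper is careful to avoid. Your plan is to run a mountain pass from the local minimum $(\tilde u,\tilde v)$, prove a local Palais--Smale condition below the first bubbling level, and obtain \emph{strong} convergence to a critical point at the mountain-pass level $c$. But the correct noncompactness threshold is not $\mathcal{C}_\rho+\frac{1}{4}\max\{\mu_1,\mu_2\}^{-1}\mathcal{S}^2$: a Palais--Smale sequence at level $c$ may converge weakly to \emph{any} critical point $(u,v)$ while shedding a bubble, so the splitting only gives $c\ge \mathcal{L}(u,v)+\frac{1}{4}\mu_i^{-1}\mathcal{S}^2\ge \mathcal{C}_K+\frac{1}{4}\mu_i^{-1}\mathcal{S}^2$, where $\mathcal{C}_K=\inf_{K}\mathcal{L}\le\mathcal{C}_\rho$ is the infimum over the full critical set. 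Hence compactness requires the estimate $c<\mathcal{C}_K+\frac{1}{4}\mu_i^{-1}\mathcal{S}^2$ (together with the analogous bounds against the semitrivial mountain-pass levels $\tilde c_{M,i}$), which is precisely the estimate \eqref{C2} that the paper states it \emph{cannot} prove and leaves as Conjecture~1. Your sketch of the ``scale-matching'' computation at a nonvanishing concentration scale $\varepsilon\sim e^{\lambda_i/(2\theta_i)}$ does not substitute for this: the hypothesis $\frac{32e^{\lambda_i/\theta_i}}{\mu_i R_{\max}^2}<1$ is used in the paper only in Proposition \ref{Lemma energy estimate-5}, in a standard $\varepsilon\to0$ expansion, to guarantee the sign of the $\varepsilon^2|\log\varepsilon|$ correction and conclude $\mathcal{C}_M<\mathcal{A}$; it is not known to yield the much stronger bound against $\mathcal{C}_K$. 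A secondary error: $(0,0)$ is \emph{not} a strict local maximum of $\mathcal{L}$, since the nonlinearities involve only $u^+,v^+$; along any direction with $u^+\equiv v^+\equiv 0$ one has $\mathcal{L}(tu,tv)=\frac{t^2}{2}\|(u,v)\|_{\mathcal{H}}^2>0$, so the final nontriviality argument ``$(0,0)$ is a strict local max, hence cannot be the saddle'' does not stand on its own (though it would be unnecessary if strong convergence to level $c>0$ were actually available).

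The paper's proof sidesteps all of this by asking for much less. It runs the mountain pass from $\gamma(0)=(0,0)$, obtains a bounded Palais--Smale sequence at level $\mathcal{C}_M>0$ (Lemmas \ref{lm} and \ref{boundedness}), and observes that the weak limit $(u,v)$ is automatically a critical point, i.e.\ a nonnegative solution, \emph{without} proving strong convergence. The only thing to exclude is $(u,v)=(0,0)$, in which case the entire level $\mathcal{C}_M$ is carried by bubbles; since then $\mathcal{L}(u,v)=0$, this forces either $\mathcal{C}_M\ge\frac{1}{4}\mu_i^{-1}\mathcal{S}^2$ (one-component bubble) or $\mathcal{C}_M\ge\mathcal{A}$ (vector bubble), contradicting Propositions \ref{Lemma energy estimate-4} and \ref{Lemma energy estimate-5}. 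This is why the theorem claims only a nonnegative solution $(\bar u,\bar v)\neq(0,0)$, possibly semi-trivial and possibly not at level $\mathcal{C}_M$. If you want to salvage your route, you must either downgrade the conclusion in the same way, or actually establish \eqref{C2}.
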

\begin{remark}
	{ \rm Inspired by Deng et al. \cite{Deng-He-Pan=Arxiv=2022}, we   use the mountain pass argument to obtain a bounded Palais-Smale sequence and to establish the corresponding energy estimates (see Proposition \ref{Lemma energy estimate-4} and \ref{Lemma energy estimate-5} ahead)  to show that the weak limit of  Palais-Smale sequence is a nonnegative solution   $(\bar{u },\bar{v})\neq (0,0)$. However, this solution may not be of the mountain pass type.    Define
		\begin{equation} \label{defi of B neqgative}
			\mathcal{C}_M:=\inf_{\gamma\in\Gamma}\max_{t\in \mbr{0,1}} \mathcal{L}(\gamma(t)),
		\end{equation}
		where  %$$\Gamma := \{\gamma \in C([0,1],\mathcal{H}): \gamma(0) = 0, \mathcal{L}(\gamma(1)) <0\}.$$
		$$\Gamma := \{\gamma \in C([0,1],\mathcal{H}): \gamma(0) = (0,0), \mathcal{L}(\gamma(1)) <\mathcal{L}(\tilde{u},\tilde{v})<0\},$$ $(\tilde{u},\tilde{v})$ is the local minima given by Theorem \ref{Theorem-2}.
		 Here, we conjecture that
		 
	{\bf Conjecture 1:} System \eqref{System1} possesses a positive mountain pass solution at level $\mathcal{C}_M > 0$.
	
	If one   proves the following estimate:
\begin{equation}\label{C2}
		\mathcal{C}_M< \min\{\mathcal{C}_K + \frac14\mu_1^{-1}S^2, \mathcal{C}_K + \frac14\mu_2^{-1}S^2, \tilde{c}_{M,1}, \tilde{c}_{M,2}\},
\end{equation}
	where $\mathcal{C}_K $ is defined in \eqref{Defi of C_K} and  $\tilde{c}_{M,i}$ is defined in Appendix \ref{single equation} for equation \eqref{equation of u} with $(\lambda,\mu,\theta) = (\lambda_i,\mu_i,\theta_i), i = 1,2$, then Conjecture 1 holds true.
}
\end{remark}

For the nonexistence of positive solutions for the system \eqref{System1}, we have the following results. 	  We point out that  Theorem \ref{Theorem-3} deals with the case where   $\theta_{1}$  or $\theta_2$ is negative, while Theorem \ref{Theorem-4} handles the case where $\theta_{1}$ and $\theta_{2}$ have opposite signs. 
	\begin{theorem}\label{Theorem-3}
	Assume that $(\lambda_{1},\mu_1,\theta_{1}) \in \varSigma_2$ or $(\lambda_{2},\mu_2,\theta_{2}) \in \varSigma_2$, where
		\begin{equation}
			\varSigma_2:=\lbr{(\lambda,\mu,\theta): \theta<0,  \mu > 0, \text{ and } |\theta|+\theta \log |\theta|-\theta\log \mu+\lambda-\lambda_{1}(\Omega)\geq 0}.
		\end{equation}
		If $\beta>0$, then the system \eqref{System1}  has no positive solutions.
	\end{theorem}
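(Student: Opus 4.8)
The plan is to argue by contradiction, testing one of the two equations against the first Dirichlet eigenfunction and exploiting the sign of $\beta$. Assume without loss of generality that $(\lambda_1,\mu_1,\theta_1)\in\varSigma_2$, and suppose $(u,v)$ were a positive solution of \eqref{System1} with $\beta>0$. Since $u,v\in H_0^1(\Omega)\hookrightarrow L^4(\Omega)$ and $s\log s^2\to 0$ as $s\to 0^+$, each nonlinear term is integrable against any $H_0^1$ function (for the logarithmic term one uses $|s\log s^2|\le C_\varepsilon(s^{1-\varepsilon}+s^{1+\varepsilon})$ together with H\"older's inequality); in particular the positive first eigenfunction $\varphi_1$ of $-\Delta$ with Dirichlet data, normalized so that $-\Delta\varphi_1=\lambda_1(\Omega)\varphi_1$, is an admissible test function.

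First I would test the $u$-equation against $\varphi_1$. Integrating by parts and using the eigenvalue equation for $\varphi_1$ gives $\int_\Omega\nabla u\cdot\nabla\varphi_1=\lambda_1(\Omega)\int_\Omega u\varphi_1$, so after rearranging the terms one obtains
\begin{equation}
\int_\Omega\Big[(\lambda_1(\Omega)-\lambda_1)-\mu_1 u^2-\theta_1\log u^2\Big]\,u\varphi_1 = \beta\int_\Omega v^2 u\varphi_1 .
\end{equation}
Because $\beta>0$ and $u,v,\varphi_1>0$ in $\Omega$, the right-hand side is strictly positive; equivalently,
\begin{equation}
\int_\Omega h(u)\,u\varphi_1 < 0, \qquad h(s):=\mu_1 s^2+\theta_1\log s^2-(\lambda_1(\Omega)-\lambda_1).
\end{equation}

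The heart of the argument is to recognize that the membership $(\lambda_1,\mu_1,\theta_1)\in\varSigma_2$ is exactly the assertion $h\geq 0$ on $(0,\infty)$. Writing $h(s)=\mu_1 s^2+2\theta_1\log s-(\lambda_1(\Omega)-\lambda_1)$, the term $2\theta_1\log s\to+\infty$ as $s\to 0^+$ (since $\theta_1<0$) while $\mu_1 s^2\to+\infty$ as $s\to\infty$, so $h$ attains a global minimum at the unique critical point $s_\ast=\sqrt{|\theta_1|/\mu_1}$ solving $h'(s)=2\mu_1 s+2\theta_1/s=0$. A direct computation gives
\begin{equation}
h(s_\ast)=|\theta_1|+\theta_1\log|\theta_1|-\theta_1\log\mu_1+\lambda_1-\lambda_1(\Omega),
\end{equation}
which is precisely the defining inequality of $\varSigma_2$. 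Hence $h(s)\geq h(s_\ast)\geq 0$ for all $s>0$, forcing $\int_\Omega h(u)u\varphi_1\geq 0$ and contradicting the strict negativity above. The case $(\lambda_2,\mu_2,\theta_2)\in\varSigma_2$ is handled identically by testing the $v$-equation against $\varphi_1$.

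I expect the only genuine subtlety to be the justification that $\varphi_1$ is an admissible test function, i.e. the finiteness of $\int_\Omega u^3\varphi_1$ and $\int_\Omega u(\log u^2)\varphi_1$; this follows either from elliptic regularity (positive solutions are smooth and bounded in $\Omega$) or directly from $u\in L^4(\Omega)$ together with the vanishing of $s\log s^2$ at $s=0$ and its subcubic growth at infinity. Everything else reduces to the elementary one-variable minimization matching $h(s_\ast)$ to the $\varSigma_2$ threshold, plus the sign bookkeeping that makes the coupling term $\beta v^2u$ the engine of the contradiction; it is essential here that $\beta>0$, so that the coupling contributes a definite positive quantity rather than an indefinite one.
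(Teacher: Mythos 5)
Your argument is correct and coincides with the paper's own proof: both test the $u$-equation against the positive first eigenfunction, reduce the $\varSigma_2$ condition to the statement that the one-variable function $\mu_1 t+\theta_1\log t+\lambda_1$ (your $h$, after the substitution $t=s^2$) is bounded below by $\lambda_1(\Omega)$, and derive the contradiction from the strict positivity of $\beta\int_\Omega v^2u\varphi_1$. The only addition is your explicit check that $\varphi_1$ is an admissible test function, which the paper leaves implicit.
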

%	\begin{remark}
%		{\rm      In \cite[Theorem 1.4]{Deng-He-Pan=Arxiv=2022},  the authors obtained a nonexistence result for the single equation \eqref{Equation1}.  Theorem \ref{Theorem-3}   is a generalization  of their result to the system \eqref{System1}.  }
%	\end{remark}

	\begin{theorem} \label{Theorem-4}
		Assume that  $\mu_1<\mu_2$ and $\mu_1< \beta< \mu_2$. If $\theta_{2}<0< \theta_1$  and  $$-\theta_{1} \log \sbr{\frac{\theta_{1}}{\beta-\mu_{1}}}+\theta_{2} \log \sbr{\frac{\theta_{2}}{\beta-\mu_{2}}}+\theta_1-\theta_2 +\lambda_{2}-\lambda_{1}>0,$$ then   the system \eqref{System1}  has no positive solutions.
	\end{theorem}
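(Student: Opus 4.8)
The plan is to reduce the whole system to a single integral identity by a cross-testing argument, and then to show that the displayed hypothesis forces the integrand of that identity to have a strict pointwise sign, which is incompatible with the identity. So suppose, for contradiction, that $(u,v)$ is a positive solution of \eqref{System1}, meaning $u,v>0$ in $\Omega$. By standard elliptic regularity $u,v$ are bounded and smooth up to the boundary, so every integral below is finite; in particular the logarithmic integrands are controlled near $\partial\Omega$ because $s\log s^2\to 0$ as $s\to 0^+$. First I would test the first equation with $v$ and the second with $u$, integrate over $\Omega$, and integrate by parts. Since $u=v=0$ on $\partial\Omega$, both left-hand sides equal $\int_\Omega\nabla u\cdot\nabla v$, so subtracting the two resulting identities cancels the Laplacian contributions and yields
\[
0=\int_\Omega uv\,g(u,v)\,dx,\qquad
g(u,v):=(\lambda_1-\lambda_2)+(\mu_1-\beta)u^2-(\mu_2-\beta)v^2+\theta_1\log u^2-\theta_2\log v^2.
\]
Note that the coupling terms $\beta v^2u$ and $\beta u^2v$ combine with the self-interactions $\mu_1u^3$, $\mu_2v^3$ to produce the coefficients $\mu_1-\beta<0$ and $\mu_2-\beta>0$, which is precisely where $\mu_1<\beta<\mu_2$ enters.

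The core of the argument is to prove that $\sup_{u,v>0}g(u,v)<0$. Since $g$ separates as $g(u,v)=(\lambda_1-\lambda_2)+h_1(u)+h_2(v)$, with $h_1(u):=(\mu_1-\beta)u^2+\theta_1\log u^2$ and $h_2(v):=-(\mu_2-\beta)v^2-\theta_2\log v^2$, I would maximize each one-variable function on $(0,\infty)$ separately. Because $\mu_1-\beta<0$ and $\theta_1>0$, the function $h_1$ is strictly concave with an interior maximum at $u^2=\theta_1/(\beta-\mu_1)$; because $\mu_2-\beta>0$ and $\theta_2<0$, the function $h_2$ is strictly concave with an interior maximum at $v^2=\theta_2/(\beta-\mu_2)$. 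Both critical arguments are positive exactly because $\mu_1<\beta<\mu_2$ and $\theta_2<0<\theta_1$, and both functions tend to $-\infty$ at the endpoints, so these are genuine maxima. Evaluating gives $\max h_1=-\theta_1+\theta_1\log\frac{\theta_1}{\beta-\mu_1}$ and $\max h_2=\theta_2-\theta_2\log\frac{\theta_2}{\beta-\mu_2}$, whence
\[
\sup_{u,v>0}g(u,v)=-\Big[-\theta_1\log\tfrac{\theta_1}{\beta-\mu_1}+\theta_2\log\tfrac{\theta_2}{\beta-\mu_2}+\theta_1-\theta_2+\lambda_2-\lambda_1\Big].
\]
The bracket is exactly the left-hand side of the hypothesis, so the assumed inequality says precisely that $\sup_{u,v>0}g(u,v)<0$.

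Finally I would combine the two pieces: since $u,v>0$ in $\Omega$ we have $uv>0$ and $g(u,v)<0$ pointwise, hence $\int_\Omega uv\,g(u,v)\,dx<0$, contradicting the identity $\int_\Omega uv\,g(u,v)\,dx=0$. This contradiction rules out positive solutions. I expect the main point requiring care to be the bookkeeping that matches the two separated maxima to the exact algebraic form of the hypothesis (in particular tracking the signs through $\beta-\mu_1>0$ and $\beta-\mu_2<0$ so that both logarithms have positive arguments); the regularity and integrability of the logarithmic terms are routine, and the role of $\mu_1<\beta<\mu_2$ is exactly to guarantee that $h_1$ and $h_2$ attain interior maxima rather than being unbounded above, without which the sign argument would fail.
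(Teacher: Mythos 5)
Your proposal is correct and follows essentially the same route as the paper: test the first equation against $v$ and the second against $u$, subtract to get the identity $\int_\Omega uv\,g(u,v)=0$, and then optimize the separated one-variable functions in $u^2$ and $v^2$ (using $\mu_1<\beta<\mu_2$ and $\theta_2<0<\theta_1$ to get interior extrema) to conclude the integrand has a strict sign, a contradiction. The only difference is cosmetic — you maximize the negative of the paper's integrand rather than minimizing it — and your explicit evaluation of the extremal value, including the $\theta_1-\theta_2$ contribution, is in fact more carefully recorded than the corresponding display in the paper.
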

	
	\begin{remark}
		{\rm If we assume that  $\mu_2<\mu_1$ and $\mu_2< \beta< \mu_1$, then the assumptions  are modified as follows:
			\begin{equation}
				\theta_{1}<0< \theta_2,  \text{ and }~-\theta_{1} \log \sbr{\frac{\theta_{1}}{\beta-\mu_{1}}}+\theta_{2} \log \sbr{\frac{\theta_{2}}{\beta-\mu_{2}}}+\theta_1-\theta_2 +\lambda_{2}-\lambda_{1}<0.
		\end{equation}}
		
	\end{remark}
	%	\begin{theorem} \label{Theorem-5}
	%		If $\Omega$ is star-shaped with respect to some point $y_0\in \Omega$,
	%	\end{theorem}
	%	
	\vskip 0.1in
	Before stating some preliminary results, we introduce some  notations and give the outline of our paper. 

Throughout this paper, we denote the norm of $L^p(\Omega)$  by $|\cdot|_p$ for  $1\leq p \leq \infty$.  We use ``$\to$'' and ``$\rightharpoonup$'' to denote the strong convergence and weak convergence in corresponding space respectively. The capital letter $C$ will appear as a constant which may vary from line to line, and $C_1$, $C_2$, $C_3$ are  prescribed constants. In Section \ref{Sect2}, we will give some preliminary results for the proof of Theorem \ref{Theorem-1} and complete its proof in Section \ref{Sect3}. We will prove Theorems \ref{Theorem-2}, \ref{least ene solu}  and \ref{Theorem-5} in Sections \ref{Sect4}, \ref{sect l-e-s} and \ref{Sect6}, respectively. Finally,  we show the proofs of nonexistence results  Theorem \ref{Theorem-3} and \ref{Theorem-4} in Section \ref{Sect5}.

	\vskip 0.1in
	
	%	This paper is organized as follows.  Section \ref{Sect2} is devoted to  some preliminary results. In Sect. \ref{Sect2.1}, we will prove a  lower and upper uniform estimate on the $L^4$-norms of elements  in the Nehari set which are below a certain energy level. Moreover, we will show that  the Nehari set is natural constraint under some conditions.  In Sect. \ref{Sect2.2}, we will prove some new energy estimate, see    Proposition \ref{Lemma bound of single energy}, \ref{Lemma energy estimate-1} and  \ref{Lemma energy estimate-2}, which are quite important to prove  (1)-(2) in Theorem \ref{Theorem-1}.
	%	Theorem \ref{Theorem-1} and Theorem \ref{Theorem-2} are proved by following some ideas of \cite{Zou 2012,Deng-He-Pan=Arxiv=2022} in Section \ref{Sect3} and 	Section \ref{Sect4}, respectively. In Section \ref{Sect3},  we  first   prove (1)-(2) of Theorem \ref{Theorem-1} by following  the  strategies in \cite{Zou 2012}. Then we check the mountain pass geometry structure for the functional $\mathcal{L}$, and prove a energy estimate for $\beta$ large enough, see  Proposition \ref{Lemma energy estimate-3}.   Finally, we  use the mountain pass argument to give a proof of (3) in Theorem \ref{Theorem-1}.
	%	 Section \ref{Sect5} is devoted to prove the nonexistence results Theorem \ref{Theorem-3}, \ref{Theorem-4}.

%\newpage	
	
	%\subsection{The existence of positive least energy solution: both $\theta_{1}$ and $\theta_{2}$ are negative???}
	
%	\newpage
	\section{Preliminary results} \label{Sect2}
	In this section, we will introduce some preliminary results for $(\lambda_{i},\mu_i,\theta_i) \in$ $\varSigma_1$ for $i=1,2$.
	%\subsection{Case 1: ($\lambda_{i}, \theta_{i}$) $\in$ $\varSigma_1$ for $i=1,2$ }
	%	\subsection{Case 1: ($\lambda_{1}$,$\theta_{1}$) $\in$ $A_0$   ($\lambda_{2}$,$\theta_{2}$) $\in$ $A_0$}
	From now on, we assume that $\beta \in \sbr{-\infty,0}\cup \sbr{0,\min \lbr{\mu_1,\mu_2}}$.
	\subsection{The Nehari set}\label{Sect2.1}
	Let
	\begin{equation} \label{Defi of beta1}
		\beta_1:=\min \lbr{  \mu_1,\mu_2 , \frac{\sqrt{\mu_1}}{2\sqrt{2\sbr{\mu_1^{-1}+\mu_2^{-1}}}}, \frac{\sqrt{\mu_2}}{2\sqrt{2\sbr{\mu_1^{-1}+\mu_2^{-1}}}} } .
	\end{equation}
	Now   we   establish both lower and upper uniform estimates on the $L^4$-norms of elements in the Nehari set that fall below a certain energy level.
	\begin{lemma} \label{Lemma L^4 norm bounded}
		Let $\beta\in  \sbr{-\sqrt{\mu_1\mu_2},0} \cup \sbr{0,\beta_1}$.  Then  there exist $C_2>C_1>0$,  such that for any $(u,v)\in \mathcal{N}$ with $\mathcal{L}(u,v)\leq \frac{1}{2}(\mu_1^{-1}+\mu_2^{-1})\mathcal{S}^2$, there holds
		\begin{equation}
			C_1\leq |u^+|_4^4,|v^+|_4^4 \leq C_2.
		\end{equation}
		Here, $C_1$, $C_2$  depend only on   $\lambda_{i}$, $\mu_i$, $\theta_i$ for $i=1,2$.
	\end{lemma}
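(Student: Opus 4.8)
The plan is to extract everything from two features of an element $(u,v)\in\mathcal{N}$: the two Nehari identities, and the closed‑form value they give for the energy. Writing $a:=|u^+|_4^4$, $b:=|v^+|_4^4$, $m_1:=|u^+|_2^2$, $m_2:=|v^+|_2^2$ and $I:=\int_\Omega|u^+|^2|v^+|^2$, the constraints $\mathcal{L}'(u,v)(u,0)=0$ and $\mathcal{L}'(u,v)(0,v)=0$ read
\[
|\nabla u|_2^2=\lambda_1 m_1+\mu_1 a+\theta_1\int_\Omega(u^+)^2\log(u^+)^2+\beta I,\qquad |\nabla v|_2^2=\lambda_2 m_2+\mu_2 b+\theta_2\int_\Omega(v^+)^2\log(v^+)^2+\beta I .
\]
Substituting these into $\mathcal{L}$ makes the gradient, $\lambda_i$ and logarithmic contributions telescope, leaving the clean identity $\mathcal{L}(u,v)=\frac{\mu_1}{4}a+\frac{\mu_2}{4}b+\frac{\theta_1}{2}m_1+\frac{\theta_2}{2}m_2+\frac{\beta}{2}I$ on $\mathcal{N}$. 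I would record this first. Note also that if $u^+\equiv0$ the first identity forces $|\nabla u|_2^2=0$, hence $u\equiv0$, contradicting $(u,v)\in\mathcal{N}$; so $a,b>0$ at the outset, and the content of the lemma is the \emph{quantitative} bounds.

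For the upper bound I use $\theta_i>0$ together with the energy threshold. When $\beta\in\sbr{0,\beta_1}$ all five terms of the identity are nonnegative, so $\frac{\mu_1}{4}a+\frac{\mu_2}{4}b\le\mathcal{L}(u,v)\le\frac12(\mu_1^{-1}+\mu_2^{-1})\mathcal{S}^2$ gives $a,b\le C_2$ at once. When $\beta\in\sbr{-\sqrt{\mu_1\mu_2},0}$ I bound $I\le\sqrt{ab}$ by Cauchy–Schwarz, discard the nonnegative $\theta_i$‑terms, and use that the quadratic form $\frac{\mu_1}{4}x^2+\frac{\mu_2}{4}y^2+\frac{\beta}{2}xy$ is positive definite exactly because $\beta^2<\mu_1\mu_2$; this yields $c_0(a+b)\le\mathcal{L}(u,v)$ and hence $a,b\le C_2$ (with $c_0$, and so $C_2$, degenerating as $\beta\to-\sqrt{\mu_1\mu_2}$).

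The lower bound is the crux, and it is where the logarithmic term must be treated with care. Combining the first Nehari identity with Sobolev, $|\nabla u|_2^2\ge|\nabla u^+|_2^2\ge\mathcal{S}\sqrt{a}$ (see \eqref{sobolev constant}), gives $\mathcal{S}\sqrt{a}\le\lambda_1 m_1+\mu_1 a+\theta_1\int_\Omega(u^+)^2\log(u^+)^2+\beta I$. A crude pointwise estimate of the log term is insufficient, because the leftover $m_1=|u^+|_2^2$ contributes at the same order $\sqrt a$ as the left side and cannot be absorbed when $\lambda_1$ is large. The resolution I would use is that $\phi(\tau):=\lambda_1\tau+\theta_1\tau\log\tau$ grows strictly slower than $\tau^2$, so there is $C=C(\lambda_1,\theta_1)$ with $\phi(\tau)\le C\tau^{3/2}$ for all $\tau\ge0$; integrating, $\lambda_1 m_1+\theta_1\int_\Omega(u^+)^2\log(u^+)^2=\int_\Omega\phi((u^+)^2)\le C\int_\Omega(u^+)^3\le C|\Omega|^{1/4}a^{3/4}$. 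Using in addition $I\le\sqrt{ab}\le\sqrt{C_2}\,\sqrt a$ (only needed for $\beta>0$) and dividing by $\sqrt a$, I obtain
\[
\mathcal{S}\le C|\Omega|^{1/4}a^{1/4}+\mu_1\sqrt{a}+\beta_+\sqrt{C_2},\qquad \beta_+:=\max\{\beta,0\},
\]
so if $a$ were too small the right‑hand side would fall below $\beta_+\sqrt{C_2}$, and a lower bound $a\ge C_1>0$ follows; the symmetric argument gives $b\ge C_1$.

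The single decisive inequality is therefore $\beta_+\sqrt{C_2}<\mathcal{S}$, and this is precisely the reason $\beta_1$ in \eqref{Defi of beta1} carries the factors $\frac{\sqrt{\mu_i}}{2\sqrt{2(\mu_1^{-1}+\mu_2^{-1})}}$: with the explicit bound $C_2\le\frac{2}{\mu_i}(\mu_1^{-1}+\mu_2^{-1})\mathcal{S}^2$ they force $\beta_+\sqrt{C_2}\le\tfrac12\mathcal{S}$ uniformly for $\beta\in\sbr{0,\beta_1}$, while for $\beta<0$ one has $\beta_+=0$ and the inequality is automatic. I expect this lower bound, and in particular the control of the sub‑quartic growth of $\phi$ so as to beat the $\sqrt a$ scale, to be the main obstacle. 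If the sharp constant is wanted (the one encoded in $\Lambda$ of \eqref{defi of Lambda}), I would instead bound $\int_\Omega(u^+)^2\log(u^+)^2$ by Jensen's inequality and optimize over the ratio $|u^+|_2^2/|u^+|_4^2$, which replaces the leading coefficient above by $\mu_i+\theta_i e^{\lambda_i/\theta_i-1}$; either route closes the estimate and yields constants depending only on $\lambda_i,\mu_i,\theta_i$ (for $\beta<0$ also on $\beta$ through $C_2$).
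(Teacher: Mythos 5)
Your proof is correct, and its skeleton coincides with the paper's: both rest on the identity $\mathcal{L}(u,v)=\mathcal{L}(u,v)-\tfrac12\mathcal{L}'(u,v)(u,v)=\tfrac{\mu_1}{4}a+\tfrac{\mu_2}{4}b+\tfrac{\beta}{2}I+\tfrac{\theta_1}{2}m_1+\tfrac{\theta_2}{2}m_2$ for the upper bound, and on the single Nehari identity combined with $|\nabla u|_2^2\ge\mathcal{S}|u^+|_4^2$ for the lower bound, with $\beta_1$ chosen exactly so that $\beta I\le\tfrac12\mathcal{S}|u^+|_4^2$. You differ in two sub-steps, both legitimately. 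For the upper bound with $\beta\in(-\sqrt{\mu_1\mu_2},0)$ you read off $a+b\le C$ directly from the positive definiteness of the quadratic form; the paper instead extracts only the bounds on $m_1,m_2$ from that identity and then routes through the logarithmic Sobolev inequality to bound $|\nabla u|_2^2$ and hence $|u^+|_4^4$ — a longer path, but one that also delivers the $H^1$ bound the paper reuses for Palais--Smale sequences (their \eqref{f8}). For the lower bound you majorize $\lambda_1\tau+\theta_1\tau\log\tau\le C\tau^{3/2}$ and control the remainder by $C|\Omega|^{1/4}a^{3/4}$, whereas the paper uses the pointwise bound $s^2\log(e^{\lambda_1/\theta_1}s^2)\le e^{\lambda_1/\theta_1-1}s^4$ to fold the linear and logarithmic terms into the quartic one; the paper's choice yields the explicit constant $\mathcal{S}^2\big(\mu_1+\theta_1e^{\lambda_1/\theta_1-1}\big)^{-2}$ (up to the factor $4$ coming from absorbing $\beta I$), which is what later feeds into $\Lambda$ in \eqref{defi of Lambda} and Proposition \ref{Lemma bound of single energy}, while your constant is non-explicit but fully sufficient for the lemma as stated. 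Your preliminary observation that $a,b>0$ on $\mathcal{N}$, and your identification of $\beta_+\sqrt{C_2}\le\tfrac12\mathcal{S}$ as the decisive inequality encoded in \eqref{Defi of beta1}, both match the paper's argument.
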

	
	\begin{proof}
		Take any $(u,v)\in \mathcal{N}$ with $\mathcal{L}(u,v)\leq \frac{1}{2}(\mu_1^{-1}+\mu_2^{-1})\mathcal{S}^2$.
		Firstly, when $-\sqrt{\mu_1\mu_2}<\beta<0$,  since the inequality  $s^2\log s^2\leq e^{-1} s^4$ holds, we have
		\begin{equation}
			\begin{aligned}
				\mathcal{S}|u^+|^2_4\leq |\nabla u|_2^2 & =\lambda_{1}|u^+|_2^2+\mu_1 |u^+|_4^4+\theta_1\int_{\Omega}(u^+)^2\log (u^+)^2+\beta|u^+v^+|_2^2 \\
				&\leq \mu_1 |u^+|_4^4 + \theta_1\int_{\Omega}( u^+)^2\log (e^{\frac{\lambda_{1}}{\theta_1}}(u^+)^2)\\
				&\leq \sbr{\mu_1+\theta_1e^{\frac{\lambda_{1}}{\theta_1}-1}}|u^+|_4^4.
			\end{aligned}
		\end{equation}
		Therefore, we have
		$|u^+|_4^4 \geq \mathcal{S}^2\sbr{\mu_1+\theta_1e^{\frac{\lambda_{1}}{\theta_1}-1}}^{-2}$. Similarly, we have $|v^+|_4^4 \geq \mathcal{S}^2\sbr{\mu_2+\theta_2e^{\frac{\lambda_{2}}{\theta_2}-1}}^{-2}$.
		
		On the other hand,   we have
		\begin{equation} \label{f23}
			\begin{aligned}
				\frac{1}{2}(\mu_1^{-1}+\mu_2^{-1})\mathcal{S}^2 \geq \mathcal{L}(u,v)&=\mathcal{L}(u,v)-\frac{1}{4}\mathcal{L}^\prime(u,v)(u,v)\\
				% 		&=\frac{1}{4}|\nabla u|_2^2-\frac{\lambda_{1}}{4}|u^+|_2^2-\frac{\theta_1}{4} \int_{\Omega}u^+^2 \log u^+^2+\frac{\theta_1}{2}|u^+|_2^2\\
				% 		&+\frac{1}{4}|\nabla v|_2^2-\frac{\lambda_{2}}{4}|v^+|_2^2-\frac{\theta_2}{4} \int_{\Omega}v^+^2 \log v^+^2++\frac{\theta_2}{2}|v^+|_2^2\\
				&= \frac{1}{4}|\nabla u|_2^2 -\frac{\theta_1}{4}\int_{\Omega}(u^+)^2 \log \sbr{e^{\frac{\lambda_{1}}{\theta_1}} (u^+)^2}+\frac{\theta_1}{2}|u^+|_2^2 \\
				& +\frac{1}{4}|\nabla v|_2^2 -\frac{\theta_2}{4}\int_{\Omega}(v^+)^2 \log \sbr{e^{\frac{\lambda_{2}}{\theta_2}} (v^+)^2}+\frac{\theta_2}{2}|v^+|_2^2 .
			\end{aligned}
		\end{equation}
		Moreover,
		\begin{equation} \label{f1}
			\begin{aligned}
				\frac{1}{2}(\mu_1^{-1}+\mu_2^{-1})\mathcal{S}^2 \geq	\mathcal{L}(u,v)&=\mathcal{L}(u,v)-\frac{1}{2}\mathcal{L}^\prime(u,v)(u,v)\\&=\frac{1}{4}\sbr{\mu_1|u^+|_4^4+\mu_2|v^+|_4^4+2\beta |u^+v^+|_2^2}+\frac{\theta_1}{2}|u^+|_2^2+\frac{\theta_2}{2}|v^+|_2^2.
			\end{aligned}
		\end{equation}
		Since  $\beta>-\sqrt{\mu_1\mu_2}$, there exists some positive constants $c_0,C_0>0$ such that
		\begin{equation}\label{f63}
			c_0\sbr{|u^+|_4^4+|v^+|_4^4}\leq \mu_1|u^+|_4^4+\mu_2|v^+|_4^4+2\beta |u^+v^+|_2^2\leq C_0\sbr{|u^+|_4^4+|v^+|_4^4}.
		\end{equation}
		Therefore, we have
		\begin{equation} \label{f2}
			|u^+|_2^2 \leq  (\mu_1^{-1}+\mu_2^{-1})\theta_1^{-1}\mathcal{S}^2 \text{ and } |v^+|_2^2 \leq  (\mu_1^{-1}+\mu_2^{-1})\theta_2^{-1}\mathcal{S}^2.
		\end{equation}
		Recalling the following useful inequality (see \cite{W.Shuai=Nonlineariyu=2019} or \cite[Theorem 8.14]{Lieb=2001})
		\begin{equation}
			\int_{\Omega} u^2 \log u^2 \leq \frac{a}{\pi}|\nabla u|_2^2+\sbr{\log |u|_2^2-N(1+\log a)}|u|_2^2  \ \ \text{ for } u\in H_0^1(\Omega) \  \text{ and } \  a>0,
		\end{equation}
		where $N=4$  is the dimension of $\Omega$. Let $w^+=e^{\frac{\lambda_{1}}{2\theta_1}} u^+$ and $z^+=e^{\frac{\lambda_{2}}{2\theta_2}} v^+$.  Since $|s^2\log s^2| \leq Cs^{2-\tau}+Cs^{2+\tau}$ for  any $\tau\in \sbr{0,1}$, we have
		\begin{equation}\label{f8}
			\begin{aligned}
				\frac{1}{4}\sbr{|\nabla u|_2^2+|\nabla v|_2^2} &\leq \frac{\mu_1^{-1}+\mu_2^{-1}}{2}\mathcal{S}^2+ \frac{\theta_1}{4}e^{-\frac{\lambda_{1}}{\theta_1}}\int_{\Omega}(w^+)^2 \log (w^+)^2+\frac{\theta_2}{4}e^{-\frac{\lambda_{2}}{\theta_2}}\int_{\Omega}(z^+)^2 \log  (z^+)^2\\
				&	\leq \frac{\mu_1^{-1}+\mu_2^{-1}}{2}\mathcal{S}^2+\frac{\theta_1}{4}e^{-\frac{\lambda_{1}}{\theta_1}}\mbr{\frac{a}{\pi}|\nabla w|_2^2+|w^+|_2^2\log |w^+|_2^2-N(1+\log a)|w^+|_2^2 }\\
				&+\frac{\theta_2}{4}e^{-\frac{\lambda_{2}}{\theta_2}}\mbr{\frac{a}{\pi}|\nabla z|_2^2+|z^+|_2^2\log |z^+|_2^2-N(1+\log a)|z^+|_2^2 }\\
				&\leq \frac{\mu_1^{-1}+\mu_2^{-1}}{2}\mathcal{S}^2+ \frac{1}{8}|\nabla u|_2^2+ C|u^+|_2^{2-\tau} +
				C|u^+|_2^{2+\tau} +C|u^+|_2^2\\
				&+	\frac{1}{8}|\nabla v|_2^2+ C|v^+|_2^{2-\tau} +
				C|v^+|_2^{2+\tau} +C|v^+|_2^2,
			\end{aligned}
		\end{equation}
		where we fix $a>0$ with $\frac{a}{\pi}\theta_1<\frac{1}{2}$ and  $\frac{a}{\pi}\theta_2<\frac{1}{2}$. Therefore, combining this with \eqref{f2}, we can see that there exists $C_2>0$, such that
		\begin{equation}
			|u^+|_4^4\leq S^{-2} |\nabla u|_2^4\leq C_2.
		\end{equation}
		Similarly, we have $|v^+|_4^4\leq C_2$.
		
		Secondly, when $\beta\in (0,\beta_1)$, since $\theta_i>0$ for $i=1,2$, we can see from \eqref{f1}that
		\begin{equation}
			|u^+|^4_4\leq 2\mu_1^{-1}( \mu_1^{-1}+ \mu_2^{-1})\mathcal{S}^2 \ \ \text{ and } \ \ 	|v^+|^4_4\leq 2\mu_2^{-1}( \mu_1^{-1}+ \mu_2^{-1})\mathcal{S}^2 .
		\end{equation}
		Since $(u,v)\in \mathcal{N}$, we have
		\begin{equation}
			\begin{aligned}
				\mathcal{S}|u^+|_4^2\leq |\nabla u|_2^2&=\lambda_{1} |u^+|_2^2+\mu_1|u^+|_4^4+\beta|u^+v^+|_2^2+\theta_1\int_{\Omega} (u^+)^2\log (u^+)^2\\
				& \leq \mu_1|u^+|_4^4+\beta |u^+|_4^2|v^+|_4^2 +\theta_1\int_{\Omega}  (u^+)^2\log (e^{\frac{\lambda_{1}}{\theta_1}}(u^+)^2)\\
				&\leq \sbr{\mu_1+\theta_1e^{\frac{\lambda_{1}}{\theta_1}-1}}|u^+|_4^4+ \beta \sqrt{ 2\mu_2^{-1}\sbr{\mu_1^{-1}+ \mu_2^{-1}}}\mathcal{S}|u^+|_4^2\\
				&\leq\sbr{\mu_1+\theta_1e^{\frac{\lambda_{1}}{\theta_1}-1}}|u^+|_4^4+ \frac{1}{2}	\mathcal{S}|u^+|_4^2.
			\end{aligned}
		\end{equation}
		which uses the assumption $\beta\in \sbr{0,\beta_1}$. Therefore,
		\begin{equation}
			|u^+|_4^4\geq \frac{\mathcal{S}^2}{4\sbr{\mu_1+\theta_1e^{\frac{\lambda_{1}}{\theta_1}-1}}^2}.
		\end{equation}
		Similarly, we have
		\begin{equation}
			|v^+|_4^4\geq \frac{\mathcal{S}^2}{4\sbr{\mu_2+\theta_1e^{\frac{\lambda_{2}}{\theta_2}-1}}^2}.
		\end{equation}
		This completes the proof.
	\end{proof}

	%\begin{lemma} \label{Lemma natural constraint}
	% Assume that $\beta \in \sbr{-\sqrt{\mu_1\mu_2},0}$ and  the energy level $\mathcal{C}_{\mathcal{N}}$ is achieved by $(u,v)  \in \mathcal{N}\cap \mathcal{Q}$. Then $(u,v)$ is a critical point of the functional  $\mathcal{L}$. That is, the set $\mathcal{N}\cap \mathcal{Q}$  is a natural constraint.
	%	%	The set $\mathcal{N}\cap \mathcal{Q}$  is a natural constraint. In other words,  any minimizer of the functional $\mathcal{L}$ on the set  $\mathcal{N}\cap \mathcal{Q}$ is the  free critical point  of $\mathcal{L}$.
	%\end{lemma}
	%
	\
	The following proposition shows that the set $\mathcal{N}$  is a natural constraint when $\beta \in \sbr{0,\beta_1}$.
	\begin{proposition} \label{Lemma natural constraint}
		Assume that $\beta \in \sbr{0,\beta_1}$ and  the energy level $\mathcal{C}_{\mathcal{N}}$ is achieved by $(u,v)  \in \mathcal{N}$. Then $(u,v)$ is a critical point of the functional  $\mathcal{L}$.
		%	The set $\mathcal{N}\cap \mathcal{Q}$  is a natural constraint. In other words,  any minimizer of the functional $\mathcal{L}$ on the set  $\mathcal{N}\cap \mathcal{Q}$ is the  free critical point  of $\mathcal{L}$.
	\end{proposition}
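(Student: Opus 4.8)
The plan is to run the classical Lagrange-multiplier argument for the two-constraint Nehari set, and the whole proposition will come down to a single $2\times 2$ nondegeneracy computation. Write the two constraint functionals as $\Phi_1(u,v):=\mathcal{L}'(u,v)(u,0)$ and $\Phi_2(u,v):=\mathcal{L}'(u,v)(0,v)$, so that $\mathcal{N}=\{u\not\equiv0,\,v\not\equiv0,\,\Phi_1=\Phi_2=0\}$. First I would record that each of $\Phi_1,\Phi_2$ is a $C^1$ scalar functional on $\mathcal{H}$: although $\mathcal{L}$ itself is not $C^2$ (the derivative of $s\log s^2$ blows up at $0$), $\Phi_i$ has the explicit quadratic-times-logarithm form whose Gateaux derivative is of type $s\log s^2$, which is a continuous Nemytskii map from $L^4$ into $L^{4/3}\hookrightarrow(H_0^1)^*$, so $G:=(\Phi_1,\Phi_2)\in C^1(\mathcal{H},\R^2)$. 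I would also note that $u^+\not\equiv0$ and $v^+\not\equiv0$ at the minimizer: if, say, $u^+\equiv0$, then the constraint $\Phi_1(u,v)=\abs{\nabla u}_2^2=0$ forces $u\equiv0$, contradicting $u\not\equiv0$. (One may alternatively invoke the lower bound $\abs{u^+}_4^4\ge C_1>0$ from Lemma \ref{Lemma L^4 norm bounded}.)

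The computational heart is to evaluate $G'(u,v)$ on the two-dimensional subspace $\operatorname{span}\{(u,0),(0,v)\}$, i.e.\ the four numbers $\Phi_i'(u,v)(u,0)$ and $\Phi_i'(u,v)(0,v)$. The key device is the scaling $((1+t)u)^+=(1+t)u^+$, which makes every term homogeneous except the logarithmic one; differentiating $(1+t)^2\big[\log(1+t)^2\,\abs{u^+}_2^2+\int_\Omega(u^+)^2\log(u^+)^2\big]$ at $t=0$ produces the extra $2\abs{u^+}_2^2$ that is the signature of the log perturbation. After assembling the directional derivatives and substituting the constraint $\Phi_1(u,v)=0$ (which replaces $\abs{\nabla u}_2^2$ by the remaining terms), the $\lambda_1$-, the $\log$-, and the $\beta$-contributions cancel in pairs, leaving the diagonal entries
\[
\Phi_1'(u,v)(u,0)=-2\big(\mu_1\abs{u^+}_4^4+\theta_1\abs{u^+}_2^2\big),\qquad \Phi_2'(u,v)(0,v)=-2\big(\mu_2\abs{v^+}_4^4+\theta_2\abs{v^+}_2^2\big),
\]
while the off-diagonal entries are both $-2\beta\int_\Omega(u^+)^2(v^+)^2$. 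Thus the relevant matrix is
\[
M=\begin{pmatrix}-2\big(\mu_1\abs{u^+}_4^4+\theta_1\abs{u^+}_2^2\big) & -2\beta\int_\Omega(u^+)^2(v^+)^2\\[2pt] -2\beta\int_\Omega(u^+)^2(v^+)^2 & -2\big(\mu_2\abs{v^+}_4^4+\theta_2\abs{v^+}_2^2\big)\end{pmatrix}.
\]

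Next I would show $\det M\neq0$. Since $(\lambda_i,\mu_i,\theta_i)\in\varSigma_1$ gives $\theta_i>0$, the lower-order terms $\theta_i\abs{\cdot}_2^2$ are nonnegative and can be dropped, and Cauchy--Schwarz gives $\big(\int_\Omega(u^+)^2(v^+)^2\big)^2\le\abs{u^+}_4^4\,\abs{v^+}_4^4$, whence
\[
\det M\ge 4\big(\mu_1\mu_2-\beta^2\big)\,\abs{u^+}_4^4\,\abs{v^+}_4^4>0,
\]
where positivity uses $u^+,v^+\not\equiv0$ together with $\beta\le\beta_1<\sqrt{\mu_1\mu_2}$ (the bound $\beta_1<\sqrt{\mu_1\mu_2}$ follows directly from the definition \eqref{Defi of beta1}). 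This nonvanishing determinant does double duty: it shows $G'(u,v)$ is surjective, so $\mathcal{N}$ is a $C^1$-manifold near $(u,v)$ and the Lagrange multiplier rule yields $\sigma_1,\sigma_2\in\R$ with $\mathcal{L}'(u,v)=\sigma_1\Phi_1'(u,v)+\sigma_2\Phi_2'(u,v)$; and then testing this identity against $(u,0)$ and $(0,v)$, using $\mathcal{L}'(u,v)(u,0)=\mathcal{L}'(u,v)(0,v)=0$, gives the homogeneous system $M(\sigma_1,\sigma_2)^{\mathrm{T}}=\mathbf{0}$. Since $\det M\neq0$, we get $\sigma_1=\sigma_2=0$, hence $\mathcal{L}'(u,v)=0$. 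I expect the main obstacle to be the matrix computation itself, specifically tracking the logarithmic term's scaling contribution and verifying the clean cancellations after inserting the Nehari constraints; the sign condition $\theta_i>0$ is exactly what keeps the diagonal entries strictly negative, and is precisely the ingredient that fails in the $\beta<0$, $\theta_i<0$ regime discussed in the remarks.
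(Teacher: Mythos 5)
Your proposal is correct and follows essentially the same route as the paper: both compute the $2\times 2$ matrix of directional derivatives of the two Nehari constraints along $\operatorname{span}\{(u,0),(0,v)\}$, verify its determinant is strictly positive (the paper cites $\theta_i>0$, $0<\beta<\beta_1\leq\min\{\mu_1,\mu_2\}$ and H\"older, which is the same Cauchy--Schwarz estimate you use), and then run the Lagrange multiplier argument, testing against $(u,0)$ and $(0,v)$ so that Cramer's rule forces both multipliers to vanish. Your additional remarks on the $C^1$ regularity of the constraints and on $u^+,v^+\not\equiv 0$ are points the paper leaves implicit, but they do not change the argument.
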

	
	\begin{proof}
		Let
		\begin{equation}
			\mathcal{G}_1(u,v):=\mathcal{L}^\prime(u,v)(u,0) \text{ and }\mathcal{G}_2(u,v):=\mathcal{L}^\prime(u,v)(0,v).
		\end{equation}
		Take $(u,v) \in \mathcal{N}$. Since the parameters $\mu_1,\beta,\theta_1>0$, we deduce from  a direct calculation that
		$$	\mathcal{G}_1^\prime(u,v)(u,v)= -2\sbr{\mu_1\int_{\Omega}|u^+|^4+\int_{\Omega} \beta |u^+|^2|v^+|^2+\theta_1\int_{\Omega}|u^+|^2 }<0,$$  Similarly,  we have $	\mathcal{G}_2^\prime(u,v)(u,v)<0$. Therefore,   it follows that  $\mathcal{G}_i(u,v)$    defines,
		locally, a  $C^1$-manifold of codimension  1 in $\mathcal{H}$ for any  $(u,v) \in \mathcal{N}$  and  $i=1,2$.   Now we claim that  in a neighborhood of  $(u,v) \in \mathcal{N}  $, the set $\mathcal{N}$ is a $C^1$-manifold of codimension 2 in $\mathcal{H}$. For that purpose, we only need to show that
		$(	\mathcal{G}_1^\prime(u,v),	\mathcal{G}_2^\prime(u,v))$ is a surjective as a linear operator $\mathcal{H} \to \R^2$.
		Notice that
		\begin{equation}\label{f51}
			\mathcal{G}_1^\prime(u,v)(t_1u,t_2v)=-2\sbr{\mu_1\int_{\Omega}|u^+|^4+\theta_1\int_{\Omega}|u^+|^2}t_1-2\sbr{\int_{\Omega} \beta |u^+|^2|v^+|^2 }t_2,
		\end{equation}
		\begin{equation}\label{f52}
			\mathcal{G}_2^\prime(u,v)(t_1u,t_2v)=-2\sbr{\int_{\Omega} \beta |u^+|^2|v^+|^2 }t_1-2\sbr{\mu_2\int_{\Omega}|v^+|^4+\theta_2\int_{\Omega}|v^+|^2}t_2,
		\end{equation}	
		and the matrix
		\begin{equation}\label{matrix 2}
			\begin{pmatrix}
				\mu_1|u^+|_4^4+\theta_1|u^+|_2^2 & \beta|u^+v^+|_2^2 \\
				\beta|u^+v^+|_2^2  & \mu_2|v^+|_4^4 +\theta_2|v^+|_2^2
			\end{pmatrix}
		\end{equation}
		has strictly positive determinant. 	The positivity of the determinant follows from the fact $\theta_1,\theta_2>0$, $0<\beta < \beta_1\leq \min \lbr{\mu_1,\mu_2}$ and the H\"{o}lder's inequality.  Then for  any $s_1,s_2\in\R$,   there exist $t_{1},t_2 \in\R$ such that	$$(	\mathcal{G}_1^\prime(w,z),	\mathcal{G}_2^\prime(w,z))=(s_1,s_2),$$
		where $(w,z)=(t_1u,t_2v)$. Therefore, the claim is true.

		%	
		%
		%	
		%		 	\begin{equation}
		%		 		\begin{aligned}
		%		 			\mathcal{G}_1^\prime(u,v)(\varphi,\psi)&=2\int_{\Omega} \sbr{\nabla u\nabla\varphi -\lambda_{1}u^+\varphi^+}-4\mu_1\int_{\Omega} |u^+|^2u^+\varphi^+-2\theta_1\int_{\Omega}\sbr{u^+\varphi^++u^+\varphi^+\l u^+)^2}\\
		%		 			&-2\beta\int_{\Omega} \sbr{|u^+||v^+|^2\varphi^++|u^+|^2|v^+|\psi^+}
		%		 		\end{aligned}
		%		 	\end{equation}
		%		 	\begin{equation}
		%		 		\begin{aligned}
		%		 			\mathcal{G}_2^\prime(u,v)(\varphi,\psi)&=2\int_{\Omega} \sbr{\nabla v\nabla\psi -\lambda_{2}v^+\psi^+}-4\mu_2\int_{\Omega} |v^+|^2v^+\psi^+-2\theta_2\int_{\Omega}\sbr{v^+\psi^++v^+\psi^+\log (v^+)^2}\\
		%		 			&-2\beta\int_{\Omega} \sbr{|u^+||v^+|^2\varphi^++|u^+|^2|v^+|\psi^+}
		%		 		\end{aligned}
		%		 	\end{equation}
		%		 	If $(u,v) \in \mathcal{N}\cap \mathcal{Q}$, then  $	\mathcal{G}_1^\prime(u,v)(u,v)= -2\sbr{\mu_1\int_{\Omega}|u^+|^4+\int_{\Omega} \beta |u^+|^2|v^+|^2+\theta_1\int_{\Omega}|u^+|^2 }<0$. Similarly,  we have $	\mathcal{G}_2^\prime(u,v)(u,v)<0$. Therefore,
		%		 	
		%	 We suppose that $(u,v)\in \mathcal{N}\cap \mathcal{Q}$ is a minimizer for the functional $\mathcal{L}$ restricted  to the set $\mathcal{N}\cap \mathcal{Q}$.
		
		We suppose that $\mathcal{C}_{\mathcal{N}}$ is achieved  by $(u,v)  \in \mathcal{N}$. Since  the set $\mathcal{N}$ is a $C^1$-manifold of codimension 2   in a neighborhood of  $(u,v) \in \mathcal{N} $. Then by the Lagrange multipliers rule, there exist $L_1,L_2\in \R$, such that
		\begin{equation} \label{f25}
			\mathcal{L}^\prime (u,v)-L_1 \mathcal{G}_1^\prime(u,v)-L_2 \mathcal{G}_2^\prime(u,v)=0.
		\end{equation}
		Multiplying the above equation with $\sbr{u,0}$ and  $\sbr{0,v}$, we deduce from    $	\mathcal{L}^\prime (u,v)(u,0)=	\mathcal{L}^\prime (u,v)(0,v)=0$ that
		\begin{equation} \label{f26}
			L_1\sbr{\mu_1\int_{\Omega}|u^+|^4+\theta_1\int_{\Omega}|u^+|^2}+L_2 \int_{\Omega} \beta |u^+|^2|v^+|^2=0.
		\end{equation}
		\begin{equation} \label{f27}
			L_1\int_{\Omega} \beta |u^+|^2|v^+|^2+L_2 \sbr{\mu_2\int_{\Omega}|v^+|^4+\theta_2\int_{\Omega}|v^+|^2}=0.
		\end{equation}
		Since the system \eqref{f26}-\eqref{f27} has a strictly positive determinant (since $u,v\not \equiv 0$ on $\mathcal{N}$), by the Cramer's rule, the system has a unique solution $L_1=L_2=0$, which implies that $\mathcal{L}^\prime (u,v)=0$ and   the Nehari manifold $\mathcal{N}$  is a  natural constraint in $\mathcal{H}$.
	\end{proof}
	
	We notice that the inequalities  $\mathcal{G}_i(u,v)<0$ for $i=1,2$ play an important role in the proof of Proposition \ref{Lemma natural constraint}.  However, it seems difficult to obtain such  inequalities when  $\beta \in \sbr{-\infty,0}$. Then the Nehari set may not be  a $C^1$-manifold, and  we can not use the Lagrange multipliers rule. To overcome this difficulty, we consider the matrix
	\begin{equation}\label{matrix 1}
		M(u,v)=\begin{pmatrix}
			M_{11}(u,v)  &  M_{12}(u,v) \\
			M_{21}(u,v)  &  M_{22}(u,v)
		\end{pmatrix}:=\begin{pmatrix}
			\mu_1|u^+|_4^4+\theta_1|u^+|_2^2 & \beta|u^+v^+|_2^2 \\
			\beta|u^+v^+|_2^2  & \mu_2|v^+|_4^4 +\theta_2|v^+|_2^2
		\end{pmatrix},
	\end{equation}
	and the set
	\begin{equation}
		\begin{aligned}
			\mathcal{Q}&:=\lbr{(u,v)\in \mathcal{H}: \text{ the matrix } M(u,v) \text{ is strictly diagonally dominant}}\\
			& =\lbr{(u,v)\in \mathcal{H}:  M_{11}(u,v)-|M_{12}(u,v)|>0, M_{22}(u,v)-|M_{21}(u,v)|>0 }.
		\end{aligned}
	\end{equation}
	Then we show that the set $\mathcal{N}\cap \mathcal{Q}$  is a natural constraint when $\beta<0$.
	
	\begin{proposition} \label{Lemma natural constraint2}
		Assume that $\beta <0$ and  the energy level $\mathcal{C}_{\mathcal{N}}$ is achieved by $(u,v)  \in \mathcal{N}\cap \mathcal{Q}$. Then $(u,v)$ is a critical point of the functional  $\mathcal{L}$. That is, the set $\mathcal{N}\cap \mathcal{Q}$  is a natural constraint.
		%	The set $\mathcal{N}\cap \mathcal{Q}$  is a natural constraint. In other words,  any minimizer of the functional $\mathcal{L}$ on the set  $\mathcal{N}\cap \mathcal{Q}$ is the  free critical point  of $\mathcal{L}$.
	\end{proposition}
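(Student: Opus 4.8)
The plan is to run the same scheme as in the proof of Proposition \ref{Lemma natural constraint}, the only structural change being the source of the positivity of $\det M(u,v)$: whereas there it was guaranteed by the constraint $0<\beta<\beta_1\le\min\{\mu_1,\mu_2\}$ together with H\"older's inequality, here it will be supplied directly by the defining property of $\mathcal{Q}$, namely the strict diagonal dominance of the matrix $M(u,v)$ in \eqref{matrix 1}. Throughout I use that $\mathcal{L}$, and hence $\mathcal{G}_1,\mathcal{G}_2$, are $C^1$ on $\mathcal{H}$, as already used in the preceding proposition.

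First I would record that every $(u,v)\in\mathcal{N}$ satisfies $u^+\not\equiv0$ and $v^+\not\equiv0$: if, say, $u^+\equiv0$, then all terms of $\mathcal{G}_1(u,v)=\mathcal{L}'(u,v)(u,0)$ carrying a factor $u^+$ vanish and the constraint $\mathcal{G}_1(u,v)=0$ reduces to $|\nabla u|_2^2=0$, forcing $u\equiv0$, a contradiction. Consequently $M_{11}(u,v)=\mu_1|u^+|_4^4+\theta_1|u^+|_2^2>0$ and $M_{22}(u,v)>0$ because $\mu_i,\theta_i>0$. Since $M_{12}(u,v)=M_{21}(u,v)=\beta|u^+v^+|_2^2$, the membership $(u,v)\in\mathcal{Q}$, i.e. $M_{11}>|M_{12}|$ and $M_{22}>|M_{21}|$, gives $M_{11}M_{22}>M_{12}^2$, that is $\det M(u,v)>0$. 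This is the single place where the hypothesis $(u,v)\in\mathcal{Q}$ enters, and it is exactly what replaces the sign argument $\mathcal{G}_i'(u,v)(u,v)<0$ available for $\beta\ge0$ but lost for $\beta<0$.

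With $\det M(u,v)>0$ in hand, the rest is the Lagrange-multiplier argument. Evaluating $(\mathcal{G}_1'(u,v),\mathcal{G}_2'(u,v))$ on the test directions $(t_1u,t_2v)$ via \eqref{f51}--\eqref{f52} shows this pair acts as $-2M(u,v)(t_1,t_2)^{\mathrm T}$; invertibility of $M(u,v)$ then already makes $(\mathcal{G}_1'(u,v),\mathcal{G}_2'(u,v))\colon\mathcal{H}\to\R^2$ surjective. Since $\R^2$ is finite dimensional, surjectivity of this $C^1$ map at the point $(u,v)$ yields, through the implicit function theorem, that $\mathcal{N}$ is a $C^1$-manifold of codimension $2$ in a neighborhood of $(u,v)$ (the open conditions $u,v\not\equiv0$ persist nearby); as $\mathcal{Q}$ is open, that neighborhood may be taken inside $\mathcal{Q}$. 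Because $(u,v)$ realizes $\mathcal{C}_{\mathcal{N}}=\inf_{\mathcal{N}}\mathcal{L}$, it is a local minimizer of $\mathcal{L}$ on this manifold, so there are $L_1,L_2\in\R$ with $\mathcal{L}'(u,v)=L_1\mathcal{G}_1'(u,v)+L_2\mathcal{G}_2'(u,v)$. Testing against $(u,0)$ and $(0,v)$ and using $\mathcal{L}'(u,v)(u,0)=\mathcal{L}'(u,v)(0,v)=0$ gives a homogeneous $2\times2$ system in $(L_1,L_2)$ with coefficient matrix $M(u,v)^{\mathrm T}$; since $\det M(u,v)>0$, Cramer's rule forces $L_1=L_2=0$, hence $\mathcal{L}'(u,v)=0$.

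The main obstacle to watch is precisely the delicate point that for $\beta<0$ one cannot claim $\mathcal{N}$ is a manifold globally, nor that $M$ is positive definite everywhere on $\mathcal{N}$; the argument is therefore genuinely \emph{local}, and it succeeds only because the minimizer is assumed to lie in the open set $\mathcal{Q}$, on which diagonal dominance makes $\det M>0$ a sign-robust guarantee. The two nontrivial verifications are thus (a) that the minimizer does lie in $\mathcal{Q}$ (this is the hypothesis of the proposition, established elsewhere) and (b) the $C^1$ regularity of $\mathcal{L}$ despite the logarithmic nonlinearity, which legitimizes both the implicit function theorem and the multiplier rule.
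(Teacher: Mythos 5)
Your proposal is correct and follows essentially the same route as the paper: use the strict diagonal dominance of $M(u,v)$ on $\mathcal{Q}$ to get $\det M(u,v)>0$ in place of the sign argument available for $0<\beta<\beta_1$, conclude via \eqref{f51}--\eqref{f52} that $(\mathcal{G}_1'(u,v),\mathcal{G}_2'(u,v))$ is surjective so that $\mathcal{N}$ is locally a codimension-2 $C^1$-manifold near the minimizer (which lies in the open set $\mathcal{Q}$), and then kill the Lagrange multipliers by Cramer's rule. The only cosmetic difference is that the paper additionally records $\mathcal{G}_i'(u,v)(u,v)<0$ from diagonal dominance before passing to the joint surjectivity, whereas you go directly to the invertibility of $M(u,v)$; both steps carry the same content.
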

	\begin{proof}
		Take any $(u,v) \in \mathcal{N}\cap \mathcal{Q}$.  Since  the matrix  $M(u,v)$  is strictly diagonally dominant and $\beta<0$, we have   $\mathcal{G}_i^\prime(u,v)(u,v)<0$ for $i=1,2$, Therefore, $\mathcal{G}_i(u,v)$    defines,
		locally, a  $C^1$-manifold of codimension  1 in $\mathcal{H}$ for any  $(u,v) \in \mathcal{N}\cap \mathcal{Q}$   and  $i=1,2$.  Since  $(u,v)\in \mathcal{Q}$, the matrix \eqref{matrix 2} is positive definite. Then,
		similar to the proof of Proposition \ref{Lemma natural constraint}, we can show that  the set $\mathcal{N}$ is a $C^1$-manifold of codimension 2 in $\mathcal{H}$ in  a neighborhood of  $(u,v) \in \mathcal{N}\cap \mathcal{Q} $.
		
		Now we suppose that $\mathcal{C}_{\mathcal{N}}$ is achieved  by $(u,v)  \in \mathcal{N}\cap \mathcal{Q}$. By the Sobolev embedding theorem, the set $ \mathcal{N}\cap \mathcal{Q}$ is an open set of $\mathcal{N}$ in the topology of $\mathcal{H}$. Thus $(u,v)$ is an inner critical point of $\mathcal{L}$ in an open subset of $\mathcal{N}$, and in particular it is a constrained critical point of $\mathcal{L}$ on  $\mathcal{N}$. Then  the conclusion follows word by word the ones of Proposition \ref{Lemma natural constraint}, that is, the set $\mathcal{N}\cap \mathcal{Q}$  is a natural constraint.
	\end{proof}
	
	Let
	\begin{equation} \label{Defi of beta0}
		\beta_0:=\min \lbr{ \sqrt{\mu_1\mu_2},\mu_1 \sqrt{\frac{C_1}{C_2}},\mu_2 \sqrt{\frac{C_1}{C_2}}}\leq \min \lbr{\mu_1,\mu_2}.
	\end{equation}
	Then the following lemma shows that we can obtain the critical points of  $\mathcal{L}$ on  $\mathcal{H} $ by considering the    critical points of  $\mathcal{L}$ on $\mathcal{N}\cap  \lbr{(u,v)\in \mathcal{H}: \mathcal{L}(u,v)\leq \frac{1}{2}(\mu_1^{-1}+\mu_2^{-1})\mathcal{S}^2}$.
	
	\begin{lemma} \label{Lemma natural constraint3}
		Let $\beta\in   \sbr{-\beta_0,0} $, then we have
		\begin{equation}
			\mathcal{N}\cap  \lbr{(u,v)\in \mathcal{H}: \mathcal{L}(u,v)\leq \frac{1}{2}(\mu_1^{-1}+\mu_2^{-1})\mathcal{S}^2}\subset  	 \mathcal{N}\cap  \mathcal{Q}.
		\end{equation}
		%	where $\beta_0$ is defined in \eqref{Defi of beta0}.
		%	, and $\beta_1$ is defined in \eqref{Defi of beta1}.
	\end{lemma}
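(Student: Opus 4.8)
The plan is to show that every pair in the left-hand set automatically makes the matrix $M(u,v)$ from \eqref{matrix 1} strictly diagonally dominant, so that the only nontrivial content is verifying the two scalar inequalities $M_{11}(u,v)-|M_{12}(u,v)|>0$ and $M_{22}(u,v)-|M_{21}(u,v)|>0$. Fix $(u,v)\in\mathcal{N}$ with $\mathcal{L}(u,v)\leq\frac{1}{2}(\mu_1^{-1}+\mu_2^{-1})\mathcal{S}^2$. Because $\beta\in[-\beta_0,0]$ and $\beta_0\leq\sqrt{\mu_1\mu_2}$ by \eqref{Defi of beta0}, the hypotheses of Lemma \ref{Lemma L^4 norm bounded} are met, which supplies the two-sided bound $C_1\leq|u^+|_4^4,|v^+|_4^4\leq C_2$.

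First I would estimate the off-diagonal entry. By the Cauchy--Schwarz (H\"older) inequality, $|u^+v^+|_2^2=\int_\Omega(u^+)^2(v^+)^2\leq|u^+|_4^2|v^+|_4^2$. Combining this with $|\beta|\leq\beta_0\leq\mu_1\sqrt{C_1/C_2}$, the upper bound $|v^+|_4^2\leq\sqrt{C_2}$ and the lower bound $|u^+|_4^2\geq\sqrt{C_1}$ yields
\[
|\beta|\,|u^+v^+|_2^2 \leq \mu_1\sqrt{\tfrac{C_1}{C_2}}\,|u^+|_4^2\,\sqrt{C_2} = \mu_1\sqrt{C_1}\,|u^+|_4^2 \leq \mu_1|u^+|_4^2\,|u^+|_4^2 = \mu_1|u^+|_4^4 .
\]
An entirely symmetric computation, using $|\beta|\leq\mu_2\sqrt{C_1/C_2}$, $|u^+|_4^2\leq\sqrt{C_2}$ and $|v^+|_4^2\geq\sqrt{C_1}$, gives $|\beta|\,|u^+v^+|_2^2\leq\mu_2|v^+|_4^4$.

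The step I expect to be the most delicate is upgrading these non-strict bounds to the strict inequalities demanded by $\mathcal{Q}$, since $\beta\in[-\beta_0,0]$ is a \emph{closed} interval and the estimates above degenerate to equalities precisely when $|\beta|=\beta_0$. This is exactly where the positivity of the logarithmic coefficients rescues the argument: on $\mathcal{N}$ one has $u\not\equiv0$ and $v\not\equiv0$, and testing the relation $\mathcal{L}'(u,v)(u,0)=0$ forces $u^+\not\equiv0$ (otherwise all $u^+$--terms vanish and $|\nabla u|_2^2=0$ would give $u\equiv0$), and likewise $v^+\not\equiv0$. Hence, recalling $\theta_1,\theta_2>0$,
\[
M_{11}(u,v)-|M_{12}(u,v)| = \mu_1|u^+|_4^4+\theta_1|u^+|_2^2-|\beta|\,|u^+v^+|_2^2 \geq \theta_1|u^+|_2^2 > 0 ,
\]
and symmetrically $M_{22}(u,v)-|M_{21}(u,v)|\geq\theta_2|v^+|_2^2>0$. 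Thus $M(u,v)$ is strictly diagonally dominant, i.e. $(u,v)\in\mathcal{Q}$, which proves the claimed inclusion $\mathcal{N}\cap\lbr{(u,v)\in\mathcal{H}:\mathcal{L}(u,v)\leq\frac{1}{2}(\mu_1^{-1}+\mu_2^{-1})\mathcal{S}^2}\subset\mathcal{N}\cap\mathcal{Q}$.
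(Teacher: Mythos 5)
Your proof is correct and follows exactly the route the paper intends: its own proof is a one-line appeal to Lemma \ref{Lemma L^4 norm bounded}, the definition of $\mathcal{Q}$, and H\"older's inequality, which is precisely what you have written out, with the additional (and welcome) observation that the terms $\theta_i|u^+|_2^2>0$ make the diagonal dominance strict even in the borderline case. The only cosmetic slip is that you write $\beta\in[-\beta_0,0]$ where the statement has the open interval $(-\beta_0,0)$, but this only strengthens what you prove and does not affect the argument.
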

	\begin{proof}
		This proof   comes  directly from  Lemma \ref{Lemma L^4 norm bounded} and the definition of $\mathcal{Q}$ by using the H\"{o}lder's inequality.
	\end{proof}

	\subsection{Energy estimates}\label{Sect2.2}
	%		This section will mainly be  devoted to  several energy estimates   Proposition\ref{Lemma bound of single energy}, \ref{Lemma energy estimate-1} and  \ref{Lemma energy estimate-2}.
	%		 In this section, we prove several energy estimates   Proposition\ref{Lemma bound of single energy}, \ref{Lemma energy estimate-1} and  \ref{Lemma energy estimate-2}.
	
	Consider the  Br\'ezis-Nirenberg problem with logarithmic perturbation
	\begin{equation}
		-\Delta u=\lambda_i u+\mu_i |u|^2u+\theta_i u \log u^2 \quad \text{ in }\Omega, ~i=1,2,
	\end{equation}
	where $\lambda_{i}\in \R$, $\mu_i, \theta_{i}>0$. As in \cite{Deng-He-Pan=Arxiv=2022}, we define the associated modified energy functional
	\begin{equation}
		\mathcal{L}_i(u)=\frac{1}{2} \int_{\Omega}|\nabla u|^2-\frac{\lambda_i}{2} \int_{\Omega}|u^+|^2-\frac{\mu_i}{4}\int_{\Omega} |u^+|^4-\frac{\theta_i}{2} \int_{\Omega} (u^+)^2\sbr{\log (u^+)^2-1},
	\end{equation}
	and the level
	\begin{equation}
		\mathcal{C}_{\theta_i} =\inf_{u \in \mathcal{N}_i }	\mathcal{L}_i(u),
	\end{equation}
	where
	%	\begin{equation}
	%		\mathcal{N}_i=\lbr{u\in H_0^1(\Omega)\setminus \lbr{0}: \int_{\Omega}|\nabla u|^2=\lambda_i \int_{\Omega}|u^+|^2+\mu_i\int_{\Omega} |u^+|^4 +\theta_i \int_{\Omega} (u^+)^2\log (u^+)^2},.
	%	\end{equation}
	\begin{equation}
		\mathcal{N}_i=\lbr{u\in H_0^1(\Omega)\setminus \lbr{0}: \mathcal{L}_i^\prime(u)u=0 }.
	\end{equation}
	Then we have the following.
	\begin{proposition} \label{Lemma bound of single energy}
		For $i=1,2$,  we have $$ \cfrac{\mu_i\mathcal{S}^2}{4\sbr{\mu_i+\theta_i e^{\lambda_i/\theta_i-1}}^2}\leq\mathcal{C}_{\theta_i} < \frac{1}{4}\mu_i^{-1}\mathcal{S}^2.$$
	\end{proposition}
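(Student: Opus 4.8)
The statement packages together a lower and an upper bound on $\mathcal{C}_{\theta_i}$, and I would treat them separately; the lower bound is the genuinely new ingredient (it is exactly the analogue of the Br\'ezis--Nirenberg energy lower bound invoked in \cite[Lemma 5.3]{Zou 2012}), while the upper bound is the standard sub-threshold estimate saying the perturbed critical problem sits strictly below the first critical level of the embedding $\mathcal{D}^{1,2}(\R^4)\hookrightarrow L^4(\R^4)$.

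\emph{Lower bound.} The plan is to first reduce $\mathcal{L}_i$ on the Nehari set to a manifestly nonnegative expression. For $u\in\mathcal{N}_i$ one has $\mathcal{L}_i'(u)u=0$; subtracting $\tfrac12\mathcal{L}_i'(u)u$ cancels both the gradient and the $\lambda_i$ terms and leaves
\[
\mathcal{L}_i(u)=\mathcal{L}_i(u)-\tfrac12\mathcal{L}_i'(u)u=\frac{\mu_i}{4}|u^+|_4^4+\frac{\theta_i}{2}|u^+|_2^2 .
\]
Since $\theta_i>0$ and necessarily $u^+\not\equiv0$ on $\mathcal{N}_i$ (otherwise the constraint forces $u\equiv0$), this already gives $\mathcal{L}_i(u)\ge\frac{\mu_i}{4}|u^+|_4^4$, so it only remains to bound $|u^+|_4^4$ from below uniformly. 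For this I would use the Nehari identity together with the elementary inequality $s^2\log s^2\le e^{-1}s^4$ already exploited in Lemma \ref{Lemma L^4 norm bounded}: writing $\lambda_i|u^+|_2^2+\theta_i\int_\Omega(u^+)^2\log(u^+)^2=\theta_i\int_\Omega(u^+)^2\log\bigl(e^{\lambda_i/\theta_i}(u^+)^2\bigr)$ and applying the inequality to $s=e^{\lambda_i/2\theta_i}u^+$ yields
\[
\mathcal{S}|u^+|_4^2\le|\nabla u|_2^2\le\bigl(\mu_i+\theta_i e^{\lambda_i/\theta_i-1}\bigr)|u^+|_4^4,
\]
hence $|u^+|_4^4\ge\mathcal{S}^2\bigl(\mu_i+\theta_i e^{\lambda_i/\theta_i-1}\bigr)^{-2}$. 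Combining the two displays gives the claimed lower bound.

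\emph{Upper bound.} Here I would run the classical test-function comparison. Fix $x_0\in\Omega$ and take the Aubin--Talenti instantons $U_\varepsilon(x)=c\,\varepsilon/(\varepsilon^2+|x-x_0|^2)$ for the embedding $\mathcal{D}^{1,2}(\R^4)\hookrightarrow L^4(\R^4)$, cut off by a fixed bump supported in $\Omega$, and estimate $\max_{t>0}\mathcal{L}_i(tU_\varepsilon)$; since $t\mapsto tU_\varepsilon$ meets $\mathcal{N}_i$ at the maximizing $t$, this quantity dominates $\mathcal{C}_{\theta_i}$. The leading term of the expansion is the scale-invariant value $\tfrac14\mu_i^{-1}\mathcal{S}^2$, and one must show the perturbations strictly lower it. In $\R^4$ the relevant scalings are $|U_\varepsilon|_2^2\sim\varepsilon^2|\log\varepsilon|$ and, because $U_\varepsilon$ is of size $\varepsilon^{-1}$ near $x_0$, $\int_\Omega U_\varepsilon^2\log U_\varepsilon^2\sim\varepsilon^2|\log\varepsilon|^2$. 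As $\theta_i>0$, the logarithmic term contributes $-\tfrac{\theta_i}{2}\varepsilon^2|\log\varepsilon|^2\bigl(1+o(1)\bigr)<0$, which is of larger order than the $O(\varepsilon^2|\log\varepsilon|)$ term produced by $\lambda_i$ (of either sign); consequently $\max_{t>0}\mathcal{L}_i(tU_\varepsilon)<\tfrac14\mu_i^{-1}\mathcal{S}^2$ for $\varepsilon$ small, which is the strict upper bound. (This inequality is also contained in \cite{Deng-He-Pan=Arxiv=2022}.)

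The Nehari reduction and the lower bound are routine. The main obstacle is the upper bound: one must carry out the full energy expansion of $\mathcal{L}_i(tU_\varepsilon)$ in dimension four, correctly isolating the dominant $\varepsilon^2|\log\varepsilon|^2$ logarithmic correction and controlling the cutoff errors, and then handle the maximization in $t$, which is delicate because the logarithmic nonlinearity is not homogeneous and so the fiber map $t\mapsto\mathcal{L}_i(tU_\varepsilon)$ has no explicit maximizer.
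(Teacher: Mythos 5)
Your lower bound is exactly the paper's argument: on $\mathcal{N}_i$ you compute $\mathcal{L}_i(u)-\tfrac12\mathcal{L}_i'(u)u=\tfrac{\mu_i}{4}|u^+|_4^4+\tfrac{\theta_i}{2}|u^+|_2^2\ge\tfrac{\mu_i}{4}|u^+|_4^4$, and the Nehari identity combined with $s^2\log s^2\le e^{-1}s^4$ gives $\mathcal{S}|u^+|_4^2\le(\mu_i+\theta_ie^{\lambda_i/\theta_i-1})|u^+|_4^4$, hence the stated bound. For the upper bound the paper simply cites \cite[Lemma 3.5]{Deng-He-Pan=Arxiv=2022}, as you also note, so the statement is covered either way.

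However, your own sketch of the upper bound contains a genuine error in the asymptotics. You claim $\int_\Omega U_\varepsilon^2\log U_\varepsilon^2\sim\varepsilon^2|\log\varepsilon|^2$ on the grounds that $U_\varepsilon\sim\varepsilon^{-1}$ near the concentration point, and you then argue that this term dominates the $O(\varepsilon^2|\log\varepsilon|)$ contribution of $\lambda_i$. This is false: writing $\log U_\varepsilon^2=\log(8\varepsilon^2)-2\log(\varepsilon^2+r^2)$, the piece $\log(8\varepsilon^2)\int U_\varepsilon^2$ contributes $-16\omega_4\varepsilon^2|\log\varepsilon|^2$ while $-2\int U_\varepsilon^2\log(\varepsilon^2+r^2)$ contributes $+16\omega_4\varepsilon^2|\log\varepsilon|^2$, and the two leading terms cancel exactly. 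The mass of $U_\varepsilon^2$ is spread logarithmically over all scales between $\varepsilon$ and the cutoff radius $R$, and $\log U_\varepsilon^2$ changes sign across those scales. The true order is $\varepsilon^2|\log\varepsilon|$, with an $R$-dependent coefficient --- this is precisely what the paper's estimates \eqref{es-3}--\eqref{es-4} record. Consequently the logarithmic term is of the \emph{same} order as the $\lambda_i$ term, and the strict inequality $\max_{t>0}\mathcal{L}_i(tv_\varepsilon)<\tfrac14\mu_i^{-1}\mathcal{S}^2$ is obtained only after combining the two coefficients and choosing the cutoff radius $R$ small enough that the factor $\theta_i\log(cR^{-2})$ dominates (compare the choice of $R$ with $\tfrac{4}{25\mu_2e^{3-\lambda_2/\theta_2}R^2}>1$ in the proof of Proposition \ref{Lemma energy estimate-1}). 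If you rely on the citation to \cite{Deng-He-Pan=Arxiv=2022} the conclusion stands, but as an independent derivation your upper-bound argument would not go through as written.
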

	\begin{proof}
		By   \cite[Lemma 3.5]{Deng-He-Pan=Arxiv=2022}  , we can easily see that $$\mathcal{C}_{\theta_i}<\frac{1}{4}\mu_i^{-1}\mathcal{S}^2, \quad \text{ for } i=1,2.$$
		On the other hand, for any $u\in \mathcal{N}_i$, by the inequality  $s^2\log s^2\leq e^{-1} s^4$ for all $s>0$, we have
		\begin{equation}
			\begin{aligned}
				\mathcal{S}|u^+|^2_4\leq |\nabla u|_2^2
				&= \mu_i |u^+|_4^4 + \theta_i\int_{\Omega}( u^+)^2\log (e^{\frac{\lambda_{i}}{\theta_i}}(u^+)^2)\\
				&\leq \sbr{\mu_i+\theta_i e^{\lambda_i/\theta_i-1}}|u^+|_4^4,
			\end{aligned}
		\end{equation}
		which implies that $|u^+|_4^4\geq \cfrac{\mathcal{S}^2}{\sbr{\mu_i+\theta_i e^{\lambda_i/\theta_i-1}}^2}$. So, we have
		\begin{equation}
			\mathcal{L}_i(u)=\mathcal{L}_i(u)-\frac{1}{2}\mathcal{L}^\prime_i(u)u=\frac{\mu_i}{4}\int_{\Omega} |u^+|^4+\frac{\theta_i}{2} \int_{\Omega} |u^+|^2 \geq \frac{\mu_i}{4}\int_{\Omega} |u^+|^4\geq  \cfrac{\mu_i\mathcal{S}^2}{4\sbr{\mu_i+\theta_i e^{\lambda_i/\theta_i-1}}^2},
		\end{equation}
		which implies that $$\mathcal{C}_{\theta_i}\geq \cfrac{\mu_i\mathcal{S}^2}{4\sbr{\mu_i+\theta_i e^{\lambda_i/\theta_i-1}}^2}~, \quad \text{ for } i=1,2.$$	This completes the proof.
	\end{proof}
	
	\begin{proposition} \label{Lemma energy estimate-1}
		Let $\beta \in  \sbr{-\beta_0,0}\cup \sbr{0,\beta_1}$. Then there holds
		\begin{equation}
			\mathcal{C}_{\mathcal{N}}< \min \lbr{ \mathcal{C}_{\theta_1}+\frac{1}{4}\mu_2^{-1}\mathcal{S}^2, \ \  \mathcal{C}_{\theta_2}+\frac{1}{4}\mu_1^{-1}\mathcal{S}^2,\ \ \frac{1}{4}\sbr{\mu_1^{-1}+\mu_2^{-1}}\mathcal{S}^2}.
		\end{equation}
	\end{proposition}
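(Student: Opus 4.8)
The plan is to bound $\mathcal{C}_{\mathcal{N}}$ from above by testing on an explicit element of $\mathcal{N}$ assembled from the two scalar ground states, and then to cash in the strict gap $\mathcal{C}_{\theta_i}<\frac14\mu_i^{-1}\mathcal{S}^2$ just established in Proposition \ref{Lemma bound of single energy}. Let $u_0,v_0$ be the positive least energy solutions of the two decoupled equations, whose existence is guaranteed by \cite{Deng-He-Pan=Arxiv=2022} since $(\lambda_i,\mu_i,\theta_i)\in\varSigma_1$; thus $u_0\in\mathcal{N}_1$, $v_0\in\mathcal{N}_2$, $\mathcal{L}_1(u_0)=\mathcal{C}_{\theta_1}$ and $\mathcal{L}_2(v_0)=\mathcal{C}_{\theta_2}$. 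Writing $K:=\int_\Omega u_0^2v_0^2>0$ and decomposing $\mathcal{L}(su_0,tv_0)=\mathcal{L}_1(su_0)+\mathcal{L}_2(tv_0)-\frac{\beta}{2}s^2t^2K$, I would seek scalings $s,t>0$ with $(su_0,tv_0)\in\mathcal{N}$. Using the scalar Nehari identities for $u_0,v_0$, these two conditions reduce to the $2\times2$ system $\phi_1(s)=\beta t^2K$ and $\phi_2(t)=\beta s^2K$, where $\phi_1(s)=\mu_1|u_0|_4^4(1-s^2)-2\theta_1|u_0|_2^2\log s$ and $\phi_2$ is its analogue for $v_0$.

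The first key observation is that $\theta_i>0$ makes each scalar fiber map unimodal with its unique maximum exactly at the Nehari point. Indeed $\phi_1'(s)=-2\mu_1|u_0|_4^4 s-2\theta_1|u_0|_2^2 s^{-1}<0$, so $\phi_1$ is strictly decreasing from $+\infty$ to $-\infty$, vanishing only at $s=1$; consequently $s\mapsto\mathcal{L}_1(su_0)$ increases on $(0,1)$ and decreases on $(1,\infty)$, whence $\mathcal{L}_1(su_0)\le\mathcal{L}_1(u_0)=\mathcal{C}_{\theta_1}$ for every $s>0$, and symmetrically $\mathcal{L}_2(tv_0)\le\mathcal{C}_{\theta_2}$. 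Therefore, once a valid pair $(s_\beta,t_\beta)$ is produced, $\mathcal{C}_{\mathcal{N}}\le\mathcal{L}(s_\beta u_0,t_\beta v_0)\le\mathcal{C}_{\theta_1}+\mathcal{C}_{\theta_2}-\frac{\beta}{2}s_\beta^2t_\beta^2K$.

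For $\beta\in\sbr{0,\beta_1}$ the argument closes at once: since each $\phi_i$ is a strictly decreasing bijection of $(0,\infty)$ onto $\R$, the system has a solution with $0<s_\beta,t_\beta<1$, the coupling term $-\frac{\beta}{2}s_\beta^2t_\beta^2K$ is strictly negative, and hence $\mathcal{C}_{\mathcal{N}}<\mathcal{C}_{\theta_1}+\mathcal{C}_{\theta_2}$. Combined with Proposition \ref{Lemma bound of single energy}, which gives $\mathcal{C}_{\theta_i}<\frac14\mu_i^{-1}\mathcal{S}^2$, this already lies strictly below all three quantities on the right, because bound $2$ equals $\mathcal{C}_{\theta_1}+\mathcal{C}_{\theta_2}+\delta_2$, bound $1$ equals $\mathcal{C}_{\theta_1}+\mathcal{C}_{\theta_2}+\delta_1$, and bound $3$ equals $\mathcal{C}_{\theta_1}+\mathcal{C}_{\theta_2}+\delta_1+\delta_2$, where $\delta_i:=\frac14\mu_i^{-1}\mathcal{S}^2-\mathcal{C}_{\theta_i}>0$. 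For $\beta\in\sbr{-\beta_0,0}$ the coupling term flips sign: now $\phi_1(s)=\beta t^2K<0$ forces $s_\beta,t_\beta>1$, and the extra term $+\frac{|\beta|}{2}s_\beta^2t_\beta^2K$ is a positive penalty that must be kept below $\min\{\delta_1,\delta_2\}$ to secure all three strict inequalities. Since $(s_\beta,t_\beta)\to(1,1)$ as $\beta\uparrow0$, the penalty tends to $0$, so shrinking $|\beta|$ to at most $\beta_0$ achieves $\frac{|\beta|}{2}s_\beta^2t_\beta^2K<\min\{\delta_1,\delta_2\}$ and finishes the estimate.

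The main obstacle is precisely the negative regime $\beta<0$. There one must first guarantee that the coupled system $\phi_1(s)=\beta t^2K$, $\phi_2(t)=\beta s^2K$ admits a \emph{bounded} solution: both equations push the scalings above $1$ and feed back into one another, so existence together with an a priori bound calls for a fixed-point/continuity argument in $\beta$, verifying that $(s_\beta,t_\beta)$ stays uniformly bounded and converges to $(1,1)$ as $\beta\uparrow0$. Quantifying this blow-up control and matching the admissible range to the explicit constant $\beta_0$ of \eqref{Defi of beta0}, through the uniform $L^4$-bounds $C_1,C_2$ of Lemma \ref{Lemma L^4 norm bounded}, is the delicate part; once the fiber-map monotonicity and Proposition \ref{Lemma bound of single energy} are in hand, the remaining comparison is direct.
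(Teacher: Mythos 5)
Your treatment of the case $\beta\in\sbr{0,\beta_1}$ is essentially sound and genuinely different from the paper's: testing $\mathcal{N}$ with the pair of scalar ground states $(su_0,tv_0)$ and using the fiber-map monotonicity $\mathcal{L}_i(su_0)\le\mathcal{L}_i(u_0)=\mathcal{C}_{\theta_i}$ gives $\mathcal{C}_{\mathcal{N}}<\mathcal{C}_{\theta_1}+\mathcal{C}_{\theta_2}$, which combined with Proposition \ref{Lemma bound of single energy} is even stronger than the stated bound. (You should, however, justify the existence of the interior Nehari scalings by taking the global maximum of $(t_1,t_2)\mapsto\mathcal{L}(t_1u_0,t_2v_0)$ over the closed positive quadrant, as the paper does for its own test pair, rather than by appealing to bijectivity of $\phi_1,\phi_2$ separately: the two equations are coupled and bijectivity of each alone does not produce a solution of the system.)

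The case $\beta\in\sbr{-\beta_0,0}$ has a genuine gap, and it is exactly the point you defer as ``the delicate part.'' Your bound reads $\mathcal{C}_{\mathcal{N}}\le\mathcal{C}_{\theta_1}+\mathcal{C}_{\theta_2}+\frac{|\beta|}{2}s_\beta^2t_\beta^2K$, and you need the penalty to fall below $\min\lbr{\delta_1,\delta_2}$ with $\delta_i=\frac14\mu_i^{-1}\mathcal{S}^2-\mathcal{C}_{\theta_i}$. This is intrinsically perturbative: it holds only for $|\beta|$ below a threshold determined by the non-explicit quantities $\delta_i$, $K=\int_\Omega u_0^2v_0^2$ and the scalings $s_\beta,t_\beta$, and there is no reason this threshold dominates the explicit constant $\beta_0$ of \eqref{Defi of beta0}, which is built from the $L^4$ bounds $C_1,C_2$ of Lemma \ref{Lemma L^4 norm bounded} and is unrelated to $\delta_i$ or $K$. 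So the proposition as stated (with its prescribed $\beta_0$), and hence the range claimed in Theorem \ref{Theorem-1}(1), is not reached by your argument. The paper removes the smallness requirement altogether by choosing a different test pair for $\beta<0$: $(s_{1,\varepsilon}u_{\theta_1},\, s_{2,\varepsilon}v_\varepsilon)$, where $v_\varepsilon$ is a truncated Aubin--Talenti bubble concentrated at a point where $\max|u_{\theta_1}|^2\le\theta_2/(2|\beta|)$, so that the coupling term is only $O(\varepsilon^2|\log\varepsilon|)$ and is absorbed by the strictly negative logarithmic contribution of order $\varepsilon^2|\log\varepsilon|$ coming from \eqref{es-3}; this yields $\mathcal{C}_{\mathcal{N}}<\mathcal{C}_{\theta_1}+\frac14\mu_2^{-1}\mathcal{S}^2$ for every $\beta<0$ with no restriction on $|\beta|$. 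That bubble construction, with the concentration point placed where the first ground state is small, is the missing idea in your proposal.
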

	
	\begin{proof}
		Without loss of generality, we prove that
		\begin{equation}
			\mathcal{C}_{\mathcal{N}}<\mathcal{C}_{\theta_1}+\frac{1}{4} \mu_2^{-1}\mathcal{S}^2.
		\end{equation}
		By \cite{Deng-He-Pan=Arxiv=2022}, the energy level $ \mathcal{C}_{\theta_1}$ can be achieved by positive solution $u_{\theta_1} $. Moreover, we have  $u_{\theta_1}\in C^2(\Omega) $ and $u_{\theta_1}\equiv 0$ on $\partial \Omega$. Then, there exists  a ball
		\begin{equation}
			B_{2R_0}(y_0) :=\lbr{x\in \Omega: |x-y_0|\leq 2R} \subset \Omega,
		\end{equation}
		satisfying
		\begin{equation}\label{f4}
			\Pi^2:=\max_{B_{2R_0}(y_0)} |u_{\theta_1}|^2\leq \frac{\theta_2}{2|\beta|}.
		\end{equation}

		Let $\xi\in C_0^\infty(\Omega)$ be the radial function, such that  $\xi(x)\equiv1$ for $0\leq |x-y_0|\leq R$, $0\leq \xi(x)\leq 1$ for $R\leq |x-y_0|\leq 2R$, $\xi(x)\equiv0$ for $ |x-y_0|\geq  2R$, where we take arbitrary $R<R_0$ such that $B_{2R}(y_0)\subset B_{2R_0}(y_0)$.  Take $v_\varepsilon(x)=\xi(x) U_{\varepsilon,y_0}(x)$, where
		\begin{equation} \label{limit system_1}
			U_{\varepsilon,y_0}(x)= \cfrac{2\sqrt{2}\varepsilon}{\varepsilon^2+|x-y_0|^2}.
		\end{equation}
		Then by \cite{Brezis-Nirenberg1983} or \cite[Lemma 1.46]{william=1996}, we obtain the following inequalities
		\begin{equation}\label{es-1}
			\begin{aligned}
				\int_{\Omega} |\nabla v_\varepsilon|^2 =\mathcal{S}^2+O(\varepsilon^2), \quad  	\int_{\Omega} | v_\varepsilon|^4 =\mathcal{S}^2+O(\varepsilon^4).
			\end{aligned}
		\end{equation}
		Also, by \cite[Lemma 3.4]{Deng-He-Pan=Arxiv=2022}, we have the following inequalities
		\begin{equation}\label{es-2}
			\int_{\Omega}|v_\varepsilon|^2= 8\omega_4\varepsilon^2|\log \varepsilon| +O(\varepsilon^2),
		\end{equation}
		\begin{equation}\label{es-3}
			\int_{\Omega} v_\varepsilon^2 \log v_\varepsilon^2 \geq 8\log \sbr{\cfrac{8(\varepsilon^2+R^2)}{e(\varepsilon^2+4R^2)^2}}\omega_4 \varepsilon^2\log(\frac{1}{\varepsilon})+O(\varepsilon^2),
		\end{equation}
		and
		\begin{equation}\label{es-4}
			\int_{\Omega} v_\varepsilon^2 \log v_\varepsilon^2 \leq  8\log \sbr{\cfrac{8e(\varepsilon^2+4R^2)}{(\varepsilon^2+R^2)^2}}\omega_4 \varepsilon^2\log(\frac{1}{\varepsilon})+O(\varepsilon^2),
		\end{equation}
		where $\omega_4$ denotes the area of the unit sphere surface in $\R^4$. Moreover, by \eqref{f4} we have
		\begin{equation}\label{es-5}
			|\beta |\int_{\Omega} |u_{\theta_1}|^2|v_\varepsilon|^2=|\beta| \int_{B_{2R}(y_0) } |u_{\theta_1}|^2|v_\varepsilon|^2 \leq |\beta| \Pi^2 \int_{\Omega}|v_\varepsilon|^2 \leq \frac{\theta_2}{2}\int_{\Omega}|v_\varepsilon|^2=O(\varepsilon^2|\log \varepsilon|).
		\end{equation}
		
		Now we claim that there exists $s_{1,\varepsilon},s_{2,\varepsilon}>0$ such that $\sbr{s_{1,\varepsilon}u_{\theta_1},s_{2,\varepsilon}v_{\varepsilon}}\in \mathcal{N}$.
		For that purpose, we consider
		\begin{equation}
			\begin{aligned}
				F(t_1,t_2)&:=\mathcal{L}(t_1u_{\theta_1},t_2v_{\varepsilon})\\&=\frac{1}{2}t_1^2\int_{\Omega}|\nabla u_{\theta_1}|^2-\frac{\lambda_1}{2}t_1^2\int_{\Omega}|u_{\theta_1}|^2-\frac{\mu_1}{4}t_1^4 \int_{\Omega}|u_{\theta_1}|^4-\frac{\theta_1}{2}\int_{\Omega}\sbr{t_1u_{\theta_1}}^2(\log (t_1u_{\theta_1})^2-1)\\&+\frac{1}{2}t_2^2\int_{\Omega} |\nabla v_{\varepsilon}|^2-\frac{\lambda_2}{2}t_2^2\int_{\Omega}|v_{\varepsilon}|^2-\frac{\mu_2}{4}t_2^4 \int_{\Omega}|v_{\varepsilon}|^4-\frac{\theta_2}{2}\int_{\Omega}(t_2v_{\varepsilon})^2(\log (t_2v_{\varepsilon})^2-1)\\&-\frac{\beta}{2}t_1^2t_2^2\int_{\Omega}|u_{\theta_1}|^2|v_{\varepsilon}|^2.
			\end{aligned}
		\end{equation}
		For $\varepsilon$ small enough,  we can see from \eqref{es-1} and  \eqref{es-2}  that  the matrix
		\begin{equation}
			\begin{pmatrix}
				\mu_1|u_{\theta_1}|_4^4 & \beta|u_{\theta_1}v_{\varepsilon}|_2^2 \\
				\beta|u_{\theta_1}v_{\varepsilon}|_2^2  & \mu_2|v_{\varepsilon}|_4^4
			\end{pmatrix}
		\end{equation}
		is   strictly diagonally dominant, so  it is  positive definite.  Therefore, there exists a constant $C>0$ such that
		\begin{equation}
			\frac{\mu_1}{4}t_1^4 \int_{\Omega}|u_{\theta_1}|^4+\frac{\beta}{2}t_1^2t_2^2\int_{\Omega}|u_{\theta_1}|^2|v_{\varepsilon}|^2+\frac{\mu_2}{4}t_2^4 \int_{\Omega}|v_{\varepsilon}|^4\geq C(t_1^4+t_2^4).
		\end{equation}
		Then
		\begin{equation}
			\begin{aligned}
				F(t_1,t_2)&\leq \frac{1}{2}t_1^2\int_{\Omega}|\nabla u_{\theta_1}|^2-\frac{\lambda_1}{2}t_1^2\int_{\Omega}|u_{\theta_1}|^2-Ct_1^4 -\frac{\theta_1}{2}\int_{\Omega}\sbr{t_1u_{\theta_1}}^2(\log (t_1u_{\theta_1})^2-1)\\&+\frac{1}{2}t_2^2\int_{\Omega} |\nabla v_{\varepsilon}|^2-\frac{\lambda_2}{2}t_2^2\int_{\Omega}|v_{\varepsilon}|^2-Ct_2^4 -\frac{\theta_2}{2}\int_{\Omega}(t_2v_{\varepsilon})^2(\log (t_2v_{\varepsilon})^2-1).
			\end{aligned}
		\end{equation}
		Since $\lim_{s \to +\infty}\frac{s^4}{s^2\log s^2} =+\infty$, we can see that $	F(t_1,t_2)  \to -\infty$, as $| (t_1,t_2)| \to +\infty$, where $| (t_1,t_2)| =\sqrt{t_1^2+t_2^2}$. Hence, there exists a global maximum point $(s_{1,\varepsilon},s_{2,\varepsilon})\in \overline{\sbr{\R^+}^2}$.
		
		Assume that $(s_{1,\varepsilon},s_{2,\varepsilon})\in  \partial \overline{\sbr{\R^+}^2}$. Without loss of generality, we assume that $s_{1,\varepsilon}=0$ and $s_{2,\varepsilon}\neq 0$. Since $\lim_{s \to 0+} \frac{s^p}{s^2\log s^2}=0																																																																																								$ for  any $p \geq 2 $, we have
		\begin{equation}
			\begin{aligned}
				F(t_1,s_{2,\varepsilon})-F(s_{1,\varepsilon},s_{2,\varepsilon})&=\frac{1}{2}t_1^2\int_{\Omega}|\nabla u_{\theta_1}|^2-\frac{\lambda_1}{2}t_1^2\int_{\Omega}|u_{\theta_1}|^2-\frac{\mu_1}{4}t_1^4 \int_{\Omega}|u_{\theta_1}|^4-\frac{\theta_1}{2}t_1^2\log t_1^2 \int_{\Omega}|u_{\theta_1}|^2\\&-\frac{\theta_1}{2}t_1^2\int_{\Omega}u_{\theta_1}^2(\log u_{\theta_1}^2-1)-\frac{\beta}{2}t_1^2s_{2,\varepsilon}^2\int_{\Omega}|u_{\theta_1}|^2|v_{\varepsilon}|^2>0
			\end{aligned}
		\end{equation}
		for $t_1$ small enough. This contradicts to the fact that $(s_{1,\varepsilon},s_{2,\varepsilon})$ is  a global maximum point in $\overline{\sbr{\R^+}^2}$. Therefore,  $(s_{1,\varepsilon},s_{2,\varepsilon}) \not \in  \partial \overline{\sbr{\R^+}^2}$ and it is a critical point of $F(t_1,t_2)$. Also, we can see that $s_{i,\varepsilon}(i=1,2)$ are bounded from above and below for $\varepsilon$ small enough.  Then we have
		\begin{equation}
			\frac{\partial F}{\partial t_1}(s_{1,\varepsilon},s_{2,\varepsilon})=	\frac{\partial F}{\partial t_2}(s_{1,\varepsilon},s_{2,\varepsilon})=0,
		\end{equation}
		which is equivalent to $$\sbr{s_{1,\varepsilon}u_{\theta_1},s_{2,\varepsilon}v_{\varepsilon}}\in \mathcal{N}.$$
		For $\varepsilon$ small enough, since $s_{i,\varepsilon}(i=1,2)$ are bounded from above and below,  then
		\begin{equation}\label{es-6}
			\theta_2 \log s_{2,\varepsilon}^2 \int_{\Omega}|v_\varepsilon|^2 =O(\varepsilon^2|\log \varepsilon|).
		\end{equation}
		By $\sbr{s_{1,\varepsilon}u_{\theta_1},s_{2,\varepsilon}v_{\varepsilon}}\in \mathcal{N}$, we have
		\begin{equation}
			\begin{aligned}
				s_{2,\varepsilon}^2&=\cfrac{\int_{\Omega}|\nabla v_\varepsilon|^2-\lambda_{2}\int_{\Omega}|v_\varepsilon|^2-\theta_2\int_{\Omega}\sbr{v_\varepsilon}^2\log \sbr{v_\varepsilon}^2-\theta_2\log s_{2,\varepsilon}^2 \int_{\Omega}|v_\varepsilon|^2-s_{1,\varepsilon}^2\beta \int_{\Omega}|u_{\theta_1}|^2|v_\varepsilon|^2}{\mu_2\int_{\Omega}|v_\varepsilon|^4}\\
				&=\cfrac{\mathcal{S}^2+O(\varepsilon^2|\log \varepsilon|)}{\mu_2\mathcal{S}^2+O(\varepsilon^4)} \to \frac{1}{\mu_2} \text{ as } \varepsilon \to 0^+,
			\end{aligned}
		\end{equation}
		and
		\begin{equation}
			\begin{aligned}
				0&=\int_{\Omega}|\nabla u_{\theta_1}|^2-\lambda_{1}\int_{\Omega}|u_{\theta_1}|^2-s_{1,\varepsilon}^2\mu_1 \int_{\Omega}|u_{\theta_1}|^4-\theta_1 \int_{\Omega}u_{\theta_1}^2\log u_{\theta_1}^2-\theta_1\log s_{1,\varepsilon}^2 \int_{\Omega}|u_{\theta_1}|^2-s_{2,\varepsilon}^2\beta \int_{\Omega}|u_{\theta_1}|^2|v_\varepsilon|^2\\
				&=\sbr{1-s_{1,\varepsilon}^2}\mu_1 \int_{\Omega}|u_{\theta_1}|^4-\theta_1\log s_{1,\varepsilon}^2 \int_{\Omega}|u_{\theta_1}|^2+O(\varepsilon^2|\log \varepsilon|).
			\end{aligned}
		\end{equation}
		Then we can see that $s_{1,\varepsilon}^2\to 1$ as $\varepsilon \to 0^+$. Therefore, for $\varepsilon$ small enough, we have
		\begin{equation} \label{f5}
			\frac{1}{2}\leq s_{1,\varepsilon}^2\leq 2 \ \ \text{ and } \ \ 	\frac{1}{2\mu_2}\leq s_{2,\varepsilon}^2\leq \frac{2}{\mu_2}.
		\end{equation}
		Since $\sbr{s_{1,\varepsilon}u_{\theta_1},s_{2,\varepsilon}v_{\varepsilon}}\in \mathcal{N}$, there holds
		\begin{equation}\label{f6}
			\mathcal{C}_{\mathcal{N}}\leq \mathcal{L}(s_{1,\varepsilon}u_{\theta_1},s_{2,\varepsilon}v_{\varepsilon})=: f_1(s_{1,\varepsilon})+f_2(s_{2,\varepsilon})-\frac{\beta}{2}s_{1,\varepsilon}^2s_{2,\varepsilon}^2\int_{\Omega}|u_{\theta_1}|^2|v_{\varepsilon}|^2,
		\end{equation}
		where
		\begin{equation}
			f_1(s_{1}):=\frac{1}{2}s_{1}^2\int_{\Omega}|\nabla u_{\theta_1}|^2-\frac{\lambda_1}{2}s_{1}^2\int_{\Omega}|u_{\theta_1}|^2-\frac{\mu_1}{4}s_{1}^4 \int_{\Omega}|u_{\theta_1}|^4 -\frac{\theta_1}{2}\int_{\Omega}\sbr{s_{1}u_{\theta_1}}^2(\log (s_{1}u_{\theta_1})^2-1),
		\end{equation}
		and
		\begin{equation}
			f_2(s_{2}):=\frac{1}{2}s_{2}^2\int_{\Omega} |\nabla v_{\varepsilon}|^2-\frac{\lambda_2}{2}s_{2}^2\int_{\Omega}|v_{\varepsilon}|^2-\frac{\mu_2}{4}s_{2}^4 \int_{\Omega}|v_{\varepsilon}|^4 -\frac{\theta_2}{2}\int_{\Omega}(s_{2}v_{\varepsilon})^2(\log (s_{2}v_{\varepsilon})^2-1).
		\end{equation}
		%\begin{lemma}
		% We have $	f_1(s_{1,\varepsilon})\leq f_1(1)=\mathcal{C}_{\theta_1}.$
		%\end{lemma}
		%\begin{proof}
		Recalling that $u_{\theta_1}$ is a positive least energy solution of $-\Delta u=\lambda_{1}u+\mu_1 |u|^2u+ \theta_1 u\log u^2$. Then, we have
		\begin{equation}\label{f3}
			\int_{\Omega}|\nabla u_{\theta_1}|^2=\lambda_{1}\int_{\Omega}|u_{\theta_1}|^2+\mu_1 \int_{\Omega}|u_{\theta_1}|^4+\theta_1 \int_{\Omega}u_{\theta_1}^2\log u_{\theta_1}^2,
		\end{equation}
		and
		\begin{equation}\label{f31}
			\begin{aligned}
				\mathcal{C}_{\theta_1}&=\frac{1}{2}\int_{\Omega}|\nabla u_{\theta_1}|^2-\frac{\lambda_1}{2}\int_{\Omega}|u_{\theta_1}|^2-\frac{\mu_1}{4}\int_{\Omega}|u_{\theta_1}|^4 -\frac{\theta_1}{2}\int_{\Omega}u_{\theta_1}^2(\log u_{\theta_1}^2-1).
			\end{aligned}
		\end{equation}
		By a direct calculation, we can see from \eqref{f3} that
		\begin{equation}
			\begin{aligned}
				f^\prime(s_{1})&=s_{1}\int_{\Omega}|\nabla u_{\theta_1}|^2-s_{1}\lambda_1\int_{\Omega}|u_{\theta_1}|^2-s_{1}^3\mu_1 \int_{\Omega}|u_{\theta_1}|^4 -\theta_1\int_{\Omega} s_{1}u_{\theta_1}\log (s_{1}u_{\theta_1})^2\\
				&=(s_{1}-s_{1}^3)\mu_1\int_{\Omega}|u_{\theta_1}|^4 -( s_{1}\log s_{1}^2)\theta_1 \int_{\Omega}|u_{\theta_1}|^2.
			\end{aligned}
		\end{equation}
		Then we can easily see that $f^\prime(s_{1})\geq 0$ for $0<s_1\leq 1$ and $f^\prime(s_{1})\leq 0$ for $s_1\geq 1$. Therefore, by \eqref{f31},
		\begin{equation}\label{C1}
			f_1(s_{1,\varepsilon})\leq f_1(1)=\mathcal{C}_{\theta_1}.
		\end{equation}
		%\end{proof}
		On the other hand, by \eqref{es-5},\eqref{f5}, we have that
		\begin{equation}
			\begin{aligned}
				f_2(s_{2,\varepsilon})-\frac{\beta}{2}s_{1,\varepsilon}^2s_{2,\varepsilon}^2\int_{\Omega}|u_{\theta_1}|^2|v_{\varepsilon}|^2 & \leq f_2(s_{2,\varepsilon})+\frac{\theta_2}{2}s_{2,\varepsilon}^2\int_{\Omega}|v_\varepsilon|^2.
			\end{aligned}
		\end{equation}
		Therefore,
		\begin{equation}
			\begin{aligned}
				&	f_2(s_{2,\varepsilon})-\frac{\beta}{2}s_{1,\varepsilon}^2s_{2,\varepsilon}^2\int_{\Omega}|u_{\theta_1}|^2|v_{\varepsilon}|^2\\
				& \leq \frac{1}{2}s_{2,\varepsilon}^2\int_{\Omega} |\nabla v_{\varepsilon}|^2-\frac{\mu_2}{4}s_{2,\varepsilon}^4 \int_{\Omega}|v_{\varepsilon}|^4-\frac{\lambda_2-2\theta_2}{2}s_{2,\varepsilon}^2\int_{\Omega}|v_{\varepsilon}|^2 -\frac{\theta_2}{2}s_{2,\varepsilon}^2\log s_{2,\varepsilon}^2\int_{\Omega} |v_{\varepsilon}|^2-\frac{\theta_2}{2}s_{2,\varepsilon}^2\int_{\Omega} v_{\varepsilon}^2\log v_{\varepsilon}^2\\
				&\leq  \sbr{\frac{1}{2}s_{2,\varepsilon}^2-\frac{\mu_2}{4}s_{2,\varepsilon}^4}\mathcal{S}^2+O(\varepsilon^2)-\frac{s_{2,\varepsilon}^2}{2}\mbr{\sbr{\lambda_2-2\theta_2+\theta_2\log \frac{1}{2\mu_2}}\int_{\Omega}|v_{\varepsilon}|^2+\theta_2\int_{\Omega} (v_{\varepsilon})^2\log v_{\varepsilon}^2} \\
				&\leq \frac{1}{4}\mu_2^{-1}\mathcal{S}^2 -\frac{s_{2,\varepsilon}^2}{2}\mbr{\sbr{\lambda_2-2\theta_2+\theta_2\log \frac{1}{2\mu_2}} 8\omega_4\varepsilon^2|\log \varepsilon| + 8\theta_2 \log \sbr{\cfrac{8(\varepsilon^2+R^2)}{e(\varepsilon^2+4R^2)^2}}\omega_4 \varepsilon^2|\log \varepsilon|}+O(\varepsilon^2)\\
				&\leq \frac{1}{4}\mu_2^{-1}\mathcal{S}^2- 4s_{2,\varepsilon}^2 \theta_2\log \sbr{\cfrac{4 (\varepsilon^2+R^2)}{\mu_2e^{3-\lambda_{2}/\theta_2}(\varepsilon^2+4R^2)^{2}}}\omega_4 \varepsilon^2|\log \varepsilon|+O(\varepsilon^2)\\
				&\leq \frac{1}{4}\mu_2^{-1}\mathcal{S}^2- \frac{2\theta_2}{\mu_2}\log \sbr{\cfrac{4 }{25\mu_2e^{3-\lambda_{2}/\theta_2}R^2}}\omega_4 \varepsilon^2|\log \varepsilon|+O(\varepsilon^2)\\
				&<\frac{1}{4}\mu_2^{-1}\mathcal{S}^2,
			\end{aligned}
		\end{equation}
		where we choose $R>0$ small enough such that $\cfrac{4 }{25\mu_2e^{3-\lambda_{2}/\theta_2}R^2}>1$ and $\varepsilon<R$.
		
		Hence, by \eqref{f6} we have
		\begin{equation}
			\begin{aligned}
				\mathcal{C}_{\mathcal{N}} <\mathcal{C}_{\theta_1}+\frac{1}{4}\mu_2^{-1}\mathcal{S}^2.
			\end{aligned}	
		\end{equation}
		Similarly, we can also prove that $\mathcal{C}_{\mathcal{N}} <\mathcal{C}_{\theta_2}+\frac{1}{4}\mu_1^{-1}\mathcal{S}^2.$
		By  Proposition \ref{Lemma bound of single energy}, we can easily see that $\mathcal{C}_{\theta_i}<\frac{1}{4}\mu_i^{-1}\mathcal{S}^2$ for $i=1,2$. Therefore, we have
		\begin{equation}
			\mathcal{C}_{\mathcal{N}}< \min \lbr{ \mathcal{C}_{\theta_1}+\frac{1}{4}\mu_2^{-1}\mathcal{S}^2, \ \  \mathcal{C}_{\theta_2}+\frac{1}{4}\mu_1^{-1}\mathcal{S}^2,\ \ \frac{1}{4}\sbr{\mu_1^{-1}+\mu_2^{-1}}\mathcal{S}^2}.
		\end{equation}
		The proof is complete.
	\end{proof}
	Now we 	consider the following limit system
	\begin{equation} \label{System2}
		\begin{cases}
			-\Delta u=
			\mu_1|u|^{2}u+\beta |v|^{2}u, \ \ x\in \R^4\\
			-\Delta v=
			\mu_2|v|^{2}v+\beta |u|^{2}v, \ \ x\in \R^4\\
			u,v \in \mathcal{D}^{1,2}(\R^4).
		\end{cases}
	\end{equation}
	Define $\mathcal{D}=\mathcal{D}^{1,2}(\R^4)\times  \mathcal{D}^{1,2}(\R^4) $ and  a $C^2$-functional $\mathcal{E}: \mathcal{D} \to \R$ given by
	\begin{equation}
		\mathcal{E}(u,v)=\frac{1}{2} \int_{\R^4}(|\nabla u|^2+|\nabla v|^2) -\frac{1}{4}\int_{\R^4} \sbr{\mu_1|u|^4+2\beta |u|^2|v|^2 +\mu_2 |v|^4 }.
	\end{equation}
	We consider the level
	\begin{equation}\label{defi of A}
		\mathcal{A}=\inf_{(u,v)\in \widetilde{\mathcal{N}}} \mathcal{E}(u,v),
		%	=\inf_{(u,v)\in \widetilde{\mathcal{N}}}  \frac{1}{4}\int_{\R^4}(|\nabla u|^2+|\nabla v|^2).
	\end{equation}
	with
	\begin{equation} \label{defi of widetilde{mathcal N}}
		\widetilde{\mathcal{N}}=\lbr{(u,v)\in \mathcal{D}:u\not\equiv0,v\not\equiv 0,\mathcal{E}^\prime(u,v)(u,0)=0, \mathcal{E}^\prime(u,v)(0,v)=0
			%		\int_{\R^4}|\nabla u|^2=\int_{\R^4}\sbr{ \mu_1|u|^4+\beta |u|^2|v|^2},\int_{\R^4}|\nabla v|^2=\int_{\R^4}\sbr{ \mu_2|v|^4+\beta |u|^2|v|^2
		}.
	\end{equation}
	From  \cite{Zou 2012}, we know that
	\begin{equation} \label{f13}
		\mathcal{A}=\begin{cases}
			\frac{1}{4}(\mu_1^{-1}+\mu_2^{-2}) \mathcal{S}^2, &\quad \text{ if } \beta<0,\\
			\frac{1}{4}(k+l) \mathcal{S}^2, 	&\quad \text{ if } 0< \beta<\min\lbr{\mu_1,\mu_2}  \text{ or }\beta>\max\lbr{\mu_1,\mu_2} ,
		\end{cases}
	\end{equation}
	where $k,l>0$  satisfy
	\begin{equation} \label{f14}
		\begin{cases}
			&\mu_1k+\beta l=1,\\
			&\beta k+\mu_2 l=1.
		\end{cases}
	\end{equation}
	
	\begin{proposition} \label{Lemma energy estimate-2}
		Let $\beta \in   \sbr{-\beta_0,0}\cup \sbr{0,\beta_1}$. Then we have  $$\mathcal{C}_{\mathcal{N}}<\mathcal{A}.$$
	\end{proposition}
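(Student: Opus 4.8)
The plan is to split the argument according to the sign of $\beta$, because by \eqref{f13} the value of $\mathcal{A}$ is genuinely different in the two regimes. For $\beta\in[-\beta_0,0)$ the conclusion is immediate: \eqref{f13} gives $\mathcal{A}=\frac14(\mu_1^{-1}+\mu_2^{-1})\mathcal{S}^2$, which is exactly the third entry of the minimum in Proposition \ref{Lemma energy estimate-1}. Hence $\mathcal{C}_{\mathcal{N}}<\frac14(\mu_1^{-1}+\mu_2^{-1})\mathcal{S}^2=\mathcal{A}$ with no further work.

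The case $0<\beta<\beta_1$ is the substantive one. Since $\beta_1\le\min\{\mu_1,\mu_2\}$, \eqref{f13} now gives $\mathcal{A}=\frac14(k+l)\mathcal{S}^2$ with $k,l>0$ solving \eqref{f14}, and a direct computation yields $k+l=\frac{\mu_1+\mu_2-2\beta}{\mu_1\mu_2-\beta^2}<\mu_1^{-1}+\mu_2^{-1}$, so the bound of Proposition \ref{Lemma energy estimate-1} is too weak and a genuinely coupled test configuration is needed. I would use the \emph{synchronized} family $(t_1v_\varepsilon,t_2v_\varepsilon)$, where $v_\varepsilon=\xi U_{\varepsilon,y_0}$ is the truncated bubble of Proposition \ref{Lemma energy estimate-1}, and project it onto $\mathcal{N}$. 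Setting $F(t_1,t_2):=\mathcal{L}(t_1v_\varepsilon,t_2v_\varepsilon)$, the quartic form has matrix $|v_\varepsilon|_4^4\begin{pmatrix}\mu_1&\beta\\\beta&\mu_2\end{pmatrix}$, which is positive definite because $0<\beta<\min\{\mu_1,\mu_2\}$; together with the growth relations $\lim_{s\to\infty}s^4/(s^2\log s^2)=+\infty$ and $\lim_{s\to0^+}s^p/(s^2\log s^2)=0$, this lets me repeat verbatim the argument of Proposition \ref{Lemma energy estimate-1} to obtain an interior maximizer $(s_{1,\varepsilon},s_{2,\varepsilon})$, so that $(s_{1,\varepsilon}v_\varepsilon,s_{2,\varepsilon}v_\varepsilon)\in\mathcal{N}$. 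The Nehari relations combined with \eqref{es-1} and \eqref{es-2} then force $\mu_1s_{1,\varepsilon}^2+\beta s_{2,\varepsilon}^2\to1$ and $\beta s_{1,\varepsilon}^2+\mu_2s_{2,\varepsilon}^2\to1$, i.e. $s_{1,\varepsilon}^2\to k$, $s_{2,\varepsilon}^2\to l$; in particular the $s_{i,\varepsilon}$ stay bounded above and below.

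It then remains to bound $\mathcal{C}_{\mathcal{N}}\le\mathcal{L}(s_{1,\varepsilon}v_\varepsilon,s_{2,\varepsilon}v_\varepsilon)$. I would split this energy into the pure-power part
\[
\frac{s_{1,\varepsilon}^2+s_{2,\varepsilon}^2}{2}\mathcal{S}^2-\frac{\mu_1s_{1,\varepsilon}^4+2\beta s_{1,\varepsilon}^2s_{2,\varepsilon}^2+\mu_2s_{2,\varepsilon}^4}{4}\mathcal{S}^2,
\]
which is $\le\mathcal{A}$ for every $(s_1,s_2)$ by the very optimization that defines $k,l$, plus a remainder collecting the $O(\varepsilon^2)$ gradient error from \eqref{es-1}, the $O(\varepsilon^4)$ quartic error, the linear terms $-\frac{\lambda_i}{2}s_{i,\varepsilon}^2\int_\Omega v_\varepsilon^2$, and the logarithmic terms $-\frac{\theta_i}{2}s_{i,\varepsilon}^2\big[(\log s_{i,\varepsilon}^2-1)\int_\Omega v_\varepsilon^2+\int_\Omega v_\varepsilon^2\log v_\varepsilon^2\big]$. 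Since $\theta_i>0$ and, by \eqref{es-3}, the coefficient of $\omega_4\varepsilon^2|\log\varepsilon|$ in $\int_\Omega v_\varepsilon^2\log v_\varepsilon^2$ tends to $+\infty$ as the truncation radius $R\to0$, while the $\lambda_i$-terms and the $(\log s_{i,\varepsilon}^2-1)$-terms (using $s_{i,\varepsilon}$ bounded and \eqref{es-2}) contribute coefficients of $\varepsilon^2|\log\varepsilon|$ bounded independently of $R$, I can first fix $R$ small enough that the whole remainder is $\le-c\,\varepsilon^2|\log\varepsilon|+O(\varepsilon^2)$, and then let $\varepsilon\to0$. This gives $\mathcal{L}(s_{1,\varepsilon}v_\varepsilon,s_{2,\varepsilon}v_\varepsilon)<\mathcal{A}$, hence $\mathcal{C}_{\mathcal{N}}<\mathcal{A}$.

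The hard part is precisely this sign control of the order-$\varepsilon^2|\log\varepsilon|$ remainder in the coupled regime: unlike the case $\beta<0$, one cannot piggyback on Proposition \ref{Lemma energy estimate-1}, and the strictly positive logarithmic contribution $-\frac{\theta_i}{2}\int_\Omega v_\varepsilon^2\log v_\varepsilon^2$ must be shown to dominate the possibly unfavorable linear terms (when some $\lambda_i<0$). This is what forces the use of the sharp lower bound \eqref{es-3} together with the freedom to shrink the truncation radius $R$.
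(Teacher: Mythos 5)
Your proposal is correct and follows essentially the same route as the paper: the case $\beta<0$ is dispatched via Proposition \ref{Lemma energy estimate-1} and \eqref{f13}, and for $0<\beta<\beta_1$ both arguments project the synchronized bubble pair onto $\mathcal{N}$, isolate the pure-power part (maximized at the value $\mathcal{A}$ by the optimization defining $k,l$), and kill the order-$\varepsilon^2|\log\varepsilon|$ remainder using the lower bound \eqref{es-3} together with a sufficiently small truncation radius $R$. The only (cosmetic) difference is that the paper pre-scales the bubble as $w_\varepsilon=\sqrt{k}\,v_\varepsilon$, $z_\varepsilon=\sqrt{l}\,v_\varepsilon$ so that the Nehari coefficients tend to $1$, whereas you keep $(s_{1,\varepsilon}^2,s_{2,\varepsilon}^2)\to(k,l)$ in the coefficients.
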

	
	\begin{proof}
		If $\beta \in  \sbr{-\beta_0,0}$,   we can see that the conclusion  follows directly from Proposition \ref{Lemma energy estimate-1}  and formula \eqref{f13}.  Now it remains to prove the case  $0<\beta<\beta_1$.
		
		Take arbitrary $y_0\in \Omega$, then there exists $R>0$ such that $B_{2R}(y_0)\subset \Omega$. Let $\xi\in C_0^\infty(\Omega)$ be the radial function, such that  $\xi(x)\equiv1$ for $0\leq |x-y_0|\leq R$, $0\leq \xi(x)\leq 1$ for $R\leq |x-y_0|\leq 2R$, $\xi(x)\equiv0$ for $ |x-y_0|\geq  2R$. Take
		\begin{equation}\label{f64}
			v_\varepsilon(x)=\xi(x) U_{\varepsilon,y_0}(x),
		\end{equation}
		where $ U_{\varepsilon,y_0}(x)$  is defined in \eqref{limit system_1}.
		Let
		\begin{equation} \label{f19}
			w_\varepsilon(x)=\sqrt{k}v_{\varepsilon}(x), \quad z_{\varepsilon}(x)=\sqrt{l}v_{\varepsilon}(x).
		\end{equation}
		Then by \eqref{es-1}, \eqref{es-2}, we have
		\begin{equation}\label{es-7}
			\begin{aligned}
				&\int_{\Omega} |\nabla w_{\varepsilon}|^2=k\mathcal{S}^2+O(\varepsilon^2),\quad 	\int_{\Omega} |\nabla z_{\varepsilon}|^2=l\mathcal{S}^2+O(\varepsilon^2),\\
				&\int_{\Omega} |w_\varepsilon|^4=k^2\mathcal{S}^2+O(\varepsilon^2), \quad \int_{\Omega} |z_\varepsilon|^4=l^2\mathcal{S}^2+O(\varepsilon^2),\\
				& \int_{\Omega} |w_\varepsilon|^2|z_\varepsilon|^2=kl\mathcal{S}^2+O(\varepsilon^2).
			\end{aligned}
		\end{equation}
		By a similar argument as used in that of Proposition \ref{Lemma energy estimate-1},  we can see that there exists $t_{1,\varepsilon},t_{2,\varepsilon}>0$, which are bounded from above and below for $\varepsilon$ small enough, such that $(t_{1,\varepsilon}	w_\varepsilon,t_{2,\varepsilon}z_{\varepsilon})\in \mathcal{N}$ and
		\begin{equation}
			\mathcal{L}(t_{1,\varepsilon}	w_\varepsilon,t_{2,\varepsilon}z_{\varepsilon})=\max_{t_1,t_2>0} \mathcal{L}(t_{1}	w_\varepsilon,t_{2}z_{\varepsilon}).
		\end{equation}
		Therefore, we have
		%\begin{equation}
		%	\begin{aligned}
		%		\mathcal{C}_{\mathcal{N}}&\leq \mathcal{L}(t_{1,\varepsilon}	w_\varepsilon,t_{2,\varepsilon}z_{\varepsilon})\\&=\frac{1}{2}t_{1,\varepsilon}^2\int_{\Omega}|\nabla w_\varepsilon|^2-\frac{\lambda_1}{2}t_{1,\varepsilon}^2\int_{\Omega}|w_\varepsilon^+|^2-\frac{\mu_1}{4}t_{1,\varepsilon}^4 \int_{\Omega}|w_\varepsilon^+|^4-\frac{\theta_1}{2}\int_{\Omega}\sbr{t_{1,\varepsilon}w_\varepsilon^+}^2(\log (t_{1,\varepsilon}w_\varepsilon^+)^2-1)\\&+\frac{1}{2}t_{2,\varepsilon}^2\int_{\Omega} |\nabla z_{\varepsilon}|^2-\frac{\lambda_2}{2}t_{2,\varepsilon}^2\int_{\Omega}|z_{\varepsilon}^+|^2-\frac{\mu_2}{4}t_{2,\varepsilon}^4 \int_{\Omega}|z_{\varepsilon}^+|^4-\frac{\theta_2}{2}\int_{\Omega}(t_{2,\varepsilon}z_{\varepsilon}^+)^2(\log (t_{2,\varepsilon}z_{\varepsilon}^+)^2-1)\\&-\frac{\beta}{2}t_{1,\varepsilon}^2t_{2,\varepsilon}^2\int_{\Omega}|w_\varepsilon^+|^2|z_{\varepsilon}^+|^2.
		%		\\&=\frac{1}{2}\sbr{t_{1,\varepsilon}^2k+t_{2,\varepsilon}^2l}\sbr{\mathcal{S}^2+O(\varepsilon^2)}-\frac{1}{4}\sbr{\mu_1k^2t_{1,\varepsilon}^4+2\beta kl t_{1,\varepsilon}^2t_{2,\varepsilon}^2+\mu_2l^2t_{2,\varepsilon}^4}(\mathcal{S}^2+O(\varepsilon^4))\\
		%		&+
		%	\end{aligned}
		%\end{equation}
		\begin{equation}\label{f18}
			\begin{aligned}
				\mathcal{C}_{\mathcal{N}}&\leq \mathcal{L}(t_{1,\varepsilon}	w_\varepsilon,t_{2,\varepsilon}z_{\varepsilon})\\
				&=\frac{1}{2}\int_{\Omega}\sbr{t_{1,\varepsilon}^2|\nabla w_\varepsilon|^2+t_{2,\varepsilon}^2 |\nabla z_{\varepsilon}|^2}-\frac{1}{4}\int_{\Omega}\sbr{t_{1,\varepsilon}^4\mu_1|w_\varepsilon|^4+2\beta t_{1,\varepsilon}^2t_{2,\varepsilon}^2|w_\varepsilon|^2|z_{\varepsilon}|^2+\mu_2t_{2,\varepsilon}^4|z_\varepsilon|^4}\\
				&-\frac{\lambda_1-\theta_1}{2}t_{1,\varepsilon}^2\int_{\Omega}|w_{\varepsilon}|^2 -\frac{\theta_1}{2}t_{1,\varepsilon}^2\log t_{1,\varepsilon}^2\int_{\Omega} |w_{\varepsilon}|^2-\frac{\theta_1}{2}t_{1,\varepsilon}^2\int_{\Omega} w_{\varepsilon}^2\log w_{\varepsilon}^2\\
				&-\frac{\lambda_2-\theta_2}{2}t_{2,\varepsilon}^2\int_{\Omega}|z_{\varepsilon}|^2 -\frac{\theta_2}{2}t_{2,\varepsilon}^2\log t_{2,\varepsilon}^2\int_{\Omega} |z_{\varepsilon}|^2-\frac{\theta_2}{2}t_{2,\varepsilon}^2\int_{\Omega} z_{\varepsilon}^2\log z_{\varepsilon}^2\\
				&=:I_1+I_2+I_3.
			\end{aligned}
		\end{equation}
		By \eqref{es-7}, we have
		\begin{equation}
			\begin{aligned}
				I_1&=\mbr{\frac{1}{2}\sbr{t_{1,\varepsilon}^2k+t_{2,\varepsilon}^2l}-\frac{1}{4}\sbr{\mu_1k^2t_{1,\varepsilon}^4+2\beta kl t_{1,\varepsilon}^2t_{2,\varepsilon}^2+\mu_2l^2t_{2,\varepsilon}^4}}(\mathcal{S}^2+O(\varepsilon^2)).
			\end{aligned}
		\end{equation}
		Consider
		\begin{equation}
			g(t_1,t_2)=\frac{1}{2}\sbr{t_{1}^2k+t_{2}^2l}-\frac{1}{4}\sbr{\mu_1k^2t_{1}^4+2\beta kl t_{1}^2t_{2}^2+\mu_2l^2t_{2}^4},
		\end{equation}
		then it is easy to see that there exist $\widetilde{t_1},\widetilde{t_2}>0$ such that
		\begin{equation}
			g(\widetilde{t_1},\widetilde{t_2})=\max_{t_1,t_2>0}g(t_1,t_2).
		\end{equation}
		Hence, combing \eqref{f14} with $	\frac{\partial }{\partial t_1}g(t_1,t_2)|_{(\widetilde{t_1},\widetilde{t_2})}=	\frac{\partial }{\partial t_2}g(t_1,t_2)|_{(\widetilde{t_1},\widetilde{t_2})}=0$, we can easily see that $(\widetilde{t_1},\widetilde{t_2})=(1,1)$.
		So, by \eqref{f13}, \eqref{f14} we have
		\begin{equation} \label{f15}
			\begin{aligned}
				I_1&\leq g(1,1)(\mathcal{S}^2+O(\varepsilon^2))\\
				&=\mbr{\frac{1}{2}\sbr{k+l}-\frac{1}{4}\sbr{\mu_1k^2+2\beta kl +\mu_2l^2}}(\mathcal{S}^2+O(\varepsilon^2))\\
				&=\frac{1}{4}\sbr{k+l}\mathcal{S}^2+O(\varepsilon^2)=\mathcal{A}+O(\varepsilon^2),		
			\end{aligned}
		\end{equation}
		On the other hand, since  $t_{i,\varepsilon}$ ($i=1,2$) are bounded from above and below for $\varepsilon$ small enough, we have
		\begin{equation}
			\log t_{i,\varepsilon}^2 \int_{\Omega}|v_\varepsilon|^2 =O(\varepsilon^2|\log \varepsilon|)\ \  \text{ for } \ i=1,2.
		\end{equation}
		Since $\sbr{t_{1,\varepsilon}w_\varepsilon,t_{2,\varepsilon}z_{\varepsilon}}\in \mathcal{N}$, it is  straightforward  to  show that
		\begin{equation}
			\begin{aligned}
				\mu_1 kt_{1,\varepsilon}^2+\beta l	t_{2,\varepsilon}^2&=\cfrac{\int_{\Omega}|\nabla v_\varepsilon|^2-\lambda_{1}\int_{\Omega}|v_\varepsilon|^2-\theta_1\int_{\Omega}v_\varepsilon^2\log v_\varepsilon^2-\theta_1 \log k \int_{\Omega} |v_\varepsilon|^2-\theta_1\log t_{1,\varepsilon}^2 \int_{\Omega}|v_\varepsilon|^2}{\int_{\Omega}|v_\varepsilon|^4}\\
				&=\cfrac{\mathcal{S}^2+O(\varepsilon^2|\log \varepsilon|)}{\mathcal{S}^2+O(\varepsilon^4)} \to 1 \text{ as } \varepsilon \to 0^+.
			\end{aligned}
		\end{equation}
		Similarly, we have
		\begin{equation}
			\beta k t_{1,\varepsilon}^2+\mu_2  l	t_{2,\varepsilon}^2 \to 1 \text{ as } \varepsilon \to 0^+.
		\end{equation}
		Combining these with \eqref{f14}, we can see that
		\begin{equation}
			t_{1,\varepsilon} \to 1, \quad t_{2,\varepsilon} \to 1 \ \ \text{ as } \varepsilon \to 0^+,
		\end{equation}
		which implies that
		\begin{equation}
			\log t_{i,\varepsilon}^2 \int_{\Omega}|v_\varepsilon|^2 =o(\varepsilon^2|\log \varepsilon|)\ \  \text{ for } \ i=1,2.
		\end{equation}
		Hence, by   \eqref{es-3}
		\begin{equation} \label{f16}
			\begin{aligned}
				I_2&=-\frac{\lambda_1-\theta_1}{2}t_{1,\varepsilon}^2\int_{\Omega}|w_{\varepsilon}|^2 -\frac{\theta_1}{2}t_{1,\varepsilon}^2\log t_{1,\varepsilon}^2\int_{\Omega} |w_{\varepsilon}|^2-\frac{\theta_1}{2}t_{1,\varepsilon}^2\int_{\Omega} w_{\varepsilon}^2\log w_{\varepsilon}^2\\
				&=-\frac{k(\lambda_1-\theta_1)+\theta_1 k\log  k}{2}t_{1,\varepsilon}^2\int_{\Omega}|v_{\varepsilon}|^2-\frac{\theta_1k}{2}t_{1,\varepsilon}^2\int_{\Omega} v_{\varepsilon}^2\log v_{\varepsilon}^2+o(\varepsilon^2|\log \varepsilon|)\\
				&\leq -\frac{t_{1,\varepsilon}^2}{2}\mbr{\sbr{k\lambda_1-\theta_1k+\theta_1 k\log  k} 8\omega_4\varepsilon^2|\log \varepsilon| + 8\theta_1k \log \sbr{\cfrac{8(\varepsilon^2+R^2)}{e(\varepsilon^2+4R^2)^2}}\omega_4 \varepsilon^2|\log \varepsilon|}+o(\varepsilon^2|\log \varepsilon|)\\
				&\leq -4t_{1,\varepsilon}^2\log\sbr{\cfrac{8k(\varepsilon^2+R^2)}{e^{2-\lambda_{1}/\theta_1}(\varepsilon^2+4R^2)^2}}\theta_1k\omega_4 \varepsilon^2|\log \varepsilon|+o(\varepsilon^2|\log \varepsilon|)\\
				&\leq -4t_{1,\varepsilon}^2\log\sbr{\cfrac{8k}{25e^{2-\lambda_{1}/\theta_1}R^2}}\theta_1k\omega_4 \varepsilon^2|\log \varepsilon|+o(\varepsilon^2|\log \varepsilon|)\\
				&\leq -C\omega_4 \varepsilon^2|\log \varepsilon|+o(\varepsilon^2|\log \varepsilon|).
			\end{aligned}
		\end{equation}
		Similarly, we have
		\begin{equation}\label{f17}
			\begin{aligned}
				I_3&\leq -4t_{2,\varepsilon}^2\log\sbr{\cfrac{8l}{25e^{2-\lambda_{2}/\theta_2}R^2}}\theta_2l\omega_4 \varepsilon^2|\log \varepsilon|+o(\varepsilon^2|\log \varepsilon|)\\
				&\leq -C\omega_4 \varepsilon^2|\log \varepsilon|+o(\varepsilon^2|\log \varepsilon|),
			\end{aligned}
		\end{equation}
		where $C>0$ and we choose $R>0$ small enough such that $\cfrac{8k}{25e^{2-\lambda_{1}/\theta_1}R^2}>1$,  $\cfrac{8l}{25e^{2-\lambda_{2}/\theta_2}R^2}>1$ and take  $\varepsilon<R$.
		Therefore, by\eqref{f18}, \eqref{f15}, \eqref{f16}, \eqref{f17},  we can see that
		\begin{equation}
			\begin{aligned}
				\mathcal{C}_{\mathcal{N}}\leq \mathcal{A}-C\omega_4 \varepsilon^2|\log \varepsilon|+o(\varepsilon^2|\log \varepsilon|)<\mathcal{A} \quad \text{ for } \varepsilon >0 \text{ small enough}.
			\end{aligned}
		\end{equation}
		The proof is completed.
	\end{proof}

	%
	%\begin{lemma}{\rm (see \cite[Lemma 2.3]{Deng-He-Pan=Arxiv=2022})}\label{Lemma convergence of logarithmic}
	%	Let $\lbr{u_n}$ be a bounded sequence in $H_0^1(\Omega)$ such that $u_n\to u$ almost everywhere in $\Omega$ as $n\to \infty$, then we have
	%	\begin{equation}
	%		\lim_{n\to \infty} \int_{\Omega}u_n^2\log u_n^2 =\int_{\Omega} u^2\log u^2  \ 	\text{ and } \ 	\lim_{n\to \infty} \int_{\Omega}(u_n^+)^2\log (u_n^+)^2 =\int_{\Omega} (u^+)^2\log (u^+)^2.
	%	\end{equation}
	%\end{lemma}

	%Repeating the proof of Theorem 1.3 in \cite{Zou 2012} with some slight modifications, we can construct a Palais-Smale sequence at level $\mathcal{C}_{\mathcal{N}}$.
	%\begin{lemma}\label{Lemma P-S sequence}  Assume that $\beta \in  \sbr{-\beta_0,0} \cup \sbr{0,\beta_1}$, then   there exists a sequence $\lbr{(u_n,v_n)}\subset \mathcal{N}$ satisfying
	%	$$
	%	\lim_{n \to \infty}	\mathcal{L} (u_n,v_n)=\mathcal{C}_{\mathcal{N}} ,\quad \lim_{n \to \infty}	\mathcal{L}^\prime (u_n,v_n)=0.
	%	$$
	%	
	%\end{lemma}

	\section{Proof of Theorem \ref{Theorem-1}}\label{Sect3}
	
	\begin{proof}[\bf Proof of (1) and (2) in  Theorem \ref{Theorem-1}]
		
		Repeating the Scheme of the proof of Theorem 1.3 in \cite{Zou 2012} with some slight modifications, we can construct a Palais-Smale sequence at the  level $\mathcal{C}_{\mathcal{N}}$. Then
		%From Lemma \ref{Lemma P-S sequence}, 	we know that
		there exists a sequence $\lbr{(u_n,v_n)}\subset \mathcal{N}$  satisfying
		%Let $\lbr{(u_n,v_n)}\subset \mathcal{N}$  be  a minimizing sequence  of $\mathcal{C}_{\mathcal{N}}$, i.e.
		\begin{equation}\label{f7}
			\lim_{n\to \infty}\mathcal{L}(u_n,v_n)=\mathcal{C}_{\mathcal{N}},\quad  \lim_{n\to \infty}\mathcal{L}^\prime(u_n,v_n)=0.
		\end{equation}
		By Proposition \ref{Lemma energy estimate-1}, we can see that $\mathcal{L}(u_n,v_n)\leq \frac{1}{2}\sbr{\mu_1^{-1}+\mu_2^{-1}}\mathcal{S}^2$ for $n$ large enough. Then by Lemma \ref{Lemma L^4 norm bounded} and \eqref{f8}, we can see that $\lbr{(u_n,v_n)}$ is bounded in $\mathcal{H} $. Hence, we may assume that
		\begin{equation}
			\sbr{u_n,v_n} \rightharpoonup (u,v) \text{ weakly in } \mathcal{H}.
		\end{equation}
		Passing to subsequence, we may also  assume that
		\begin{equation}
			\begin{aligned}
				&u_n \rightharpoonup u, \quad v_n\rightharpoonup v \ \text{ weakly in } L^4(\Omega),\\
				&u_n \to u, \quad v_n\to v \ \text{ strongly in } L^p(\Omega) \text{ for } 2\leq p<4,\\
				&u_n \to u, \quad v_n \to v \ \text{ almost everywhere in } \Omega.
			\end{aligned}
		\end{equation}
		By  using the inequality $|s^2\log s^2| \leq Cs^{2-\tau}+Cs^{2+\tau}$, $\tau\in \sbr{0,1}$ and the dominated convergence theorem, one gets
		\begin{equation}
			\lim_{n\to \infty} \int_{\Omega} u_n^+\varphi^+\log(u_n^+)^2 =\int_{\Omega} u^+\varphi^+\log (u^+)^2 \ \  \text{ for any } \varphi\in C_0^\infty(\Omega) .
		\end{equation}
		Then by \eqref{f7}, we have $\mathcal{L}^\prime (u,v)=0$. Moreover, by using the weak-lower semicontinuity  of the norm, we have
		\begin{equation}\label{f53}
			\mathcal{L}(u,v)\leq \frac{1}{2}\sbr{\mu_1^{-1}+\mu_2^{-1}}\mathcal{S}^2.
		\end{equation}
		Let $w_n=u_n-u$ and $z_n=v_n-v$. Using the Br\'{e}zis-Lieb Lemma (see \cite{Brezis Lieb lemma} and  \cite[formula 5.37]{Zou 2012}). we have
		\begin{equation}\label{f9}
			\begin{aligned}
				&|u_n^+|^4_4=|u^+|_4^4+|w_n^+|_4^4+o_n(1), \quad |v_n^+|^4_4=|v^+|_4^4+|z_n^+|_4^4+o_n(1),\\
				&|u_n^+v_n^+|_2^2=|u^+v^+|_2^2+|w_n^+z_n^+|_2^2+o_n(1).
			\end{aligned}
		\end{equation}
		Since $(u_n,v_n)\in \mathcal{N}$ and $\mathcal{L}^\prime(u,v)=0$, 	by \eqref{f9} and  \cite[Lemma 2.3]{Deng-He-Pan=Arxiv=2022}, we have
		\begin{equation} \label{f11}
			\begin{aligned}
				|\nabla w_n|_2^2=\mu_1|w_n^+|_4^4+\beta|w_n^+z_n^+|_2^2+o_n(1), \quad 	|\nabla z_n|_2^2=\mu_2|z_n^+|_4^4+\beta|w_n^+z_n^+|_2^2+o_n(1).
			\end{aligned}
		\end{equation}
		By a direct calculation, one gets
		\begin{equation}\label{f10}
			\mathcal{L}(u_n,v_n)=\mathcal{L}(u,v)+\frac{1}{4}\int_{\Omega}|\nabla w_n|^2+\frac{1}{4}\int_{\Omega}|\nabla z_n|^2+o_n(1).
		\end{equation}
		Passing to a subsequence, we may assume that
		\begin{equation}
			\int_{\Omega}|\nabla w_n|^2=k_1+o_n(1), \quad \int_{\Omega}|\nabla z_n|^2=k_2+o_n(1).
		\end{equation}	
		Letting $n\to +\infty$ in \eqref{f10}, we have
		\begin{equation}\label{f12}
			0 \leq \mathcal{L}(u,v) \leq \mathcal{L}(u,v)+\frac{1}{4}k_1+\frac{1}{4}k_2=\lim_{n\to \infty} 	\mathcal{L}(u_n,v_n) =\mathcal{C}_{\mathcal{N}}.
		\end{equation}
		Now we claim that $u\not \equiv 0$ and $v\not \equiv 0$.	
		\vskip 0.1in
		{\bf Case 1.} $u\equiv0$ and $v\equiv 0$.
		
		Firstly, we prove that $k_1>0$ and $k_2>0$.  Without loss of generality, we assume by contradiction that $k_1=0$, then we can see that $w_n\to 0$ strongly in $H_0^1(\Omega)$ and $u_n \to 0$ strongly in $H_0^1(\Omega)$, then by the Sobolev inequality, we can see that $u_n \to 0$ strongly in $L^4(\Omega)$, which is impossible by Lemma \ref{Lemma L^4 norm bounded}. Therefore,  we have that $k_1>0$ and $k_2>0$.  Since \eqref{f11} holds, it is easy to check that there exists $t_n,s_n>0$ such that $(t_nw_n,s_nz_n) \in \widetilde{\mathcal{N}}$, which is given by \eqref{defi of widetilde{mathcal N}}. Moreover,
		$	t_n=1+o_n(1),   s_n=1+o_n(1).$
		Therefore, by \eqref{f13}  we have
		\begin{equation}
			\frac{1}{4}k_1+\frac{1}{4}k_2=\lim_{n\to \infty} \mathcal{E}(w_n,z_n)=\lim_{n\to \infty} \mathcal{E}(t_nw_n,s_nz_n)\geq \mathcal{A}.
		\end{equation}
		Hence, by \eqref{f12}, we know that $\mathcal{C}_{\mathcal{N}}\geq\mathcal{A}$, a contradiction with Proposition \ref{Lemma energy estimate-2}. Therefore, Case 1 is impossible.
		
		\vskip 0.1in

		{\bf Case 2.} $u\equiv0$, $v\not \equiv 0$ or $u \not \equiv0$, $v\equiv 0$.
		
		Without loss of generality, we may assume that $u\equiv0$, $v\not \equiv 0$. Then by Case 1, we have that $k_1>0$, and we may assume that $k_2=0$. Then we know that $ |w_n^+z_n^+|_2^2=o_n(1)$. By \eqref{f11}, we have
		\begin{equation}
			\int_{\Omega}|\nabla w_n|^2 =\mu_1|w_n^+|_4^4+o_n(1)\leq \mu_1\mathcal{S}^{-2}\sbr{\int_{\Omega}|\nabla w_n|^2 }^2.
		\end{equation}
		Thus, letting $n\to \infty$, we have  $k_1\geq \mu_1^{-1}\mathcal{S}^{2}$.  Notice that $v$ is a solution of $-\Delta w =\lambda_{2}w +\mu_2|w|^2w+\theta_2w\log w^2$, we have $\mathcal{L}(0,v)\geq \mathcal{C}_{\theta_2}$. Therefore, by \eqref{f12} we have that
		\begin{equation}
			\mathcal{C}_{\mathcal{N}}\geq \mathcal{C}_{\theta_2} +\frac{1}{4}  \mu_1^{-1}\mathcal{S}^{2},
		\end{equation}
		which is a contradiction with Proposition \ref{Lemma energy estimate-1}. Therefore, Case 2 is impossible.
		
		Since Case 1 and 2 are both impossible, we get that $u\not \equiv 0$  and $v\not \equiv 0$. Therefore, $ (u,v)\in \mathcal{N}$ and by \eqref{f12} we have that $\mathcal{L}(u,v)=\mathcal{C}_{\mathcal{N}}$.
		Then combining \eqref{f53}  with Proposition \ref{Lemma natural constraint}, \ref{Lemma natural constraint2} and  Lemma \ref{Lemma natural constraint3}, $(u,v)$ is a solution of system \eqref{System1}. Since  $\mathcal{L}^\prime(u,v)=0$, we can see that
		\begin{equation}
			0=\mathcal{L}^\prime(u,v)(u^-,0)=\int_{\Omega}|\nabla u^-|^2, \quad 	0=\mathcal{L}^\prime(u,v)(0,v^-)=\int_{\Omega}|\nabla v^-|^2.
		\end{equation}
		which implies that $u\geq 0$, $v\geq 0$. By the Morse's iteration, the solutions $u,v$ belong to $ L^{\infty}(\Omega)$. Then the H\"{o}lder estimate implies that $u,v\in C^{0,\gamma}(\Omega)$ for any $0<\gamma<1$. Define  $g_i:[0,+\infty) \to \R$, $i=1,2$  by
		\begin{equation}
			g_i(s):=\begin{cases}
				2\theta_i|s\log s^2|, \quad& s>0,\\
				0, \quad &s=0.
			\end{cases}
		\end{equation}
		Then we follow the arguments  in  \cite{Deng-He-Pan=Arxiv=2022,vazquez=AMO=1984}, and get that $u ,v\in C^2(\Omega)$ and $u,v>0$ in $\Omega$. This completes the proof.
		% Finally, we can see from Theorem \ref{Theorem-2} that  $(u,v)$ is a positive least energy solution of  system \eqref{System1}.
	\end{proof}

	It remains to prove (3) of Theorem \ref{Theorem-1}.  We will follow the strategies in \cite{Zou 2012} and  postpone the proof to the end of this section. Before proceeding, we introduce some definitions and lemma to the proof. From now, we assume that $\beta>\max\lbr{\mu_1,\mu_2}$.
	Let
	\begin{equation}
		\mathcal{C}_M:=\inf_{\gamma\in\Gamma}\max_{t\in \mbr{0,1}} \mathcal{L}(\gamma(t)),
	\end{equation}
	where $\Gamma=\lbr{\gamma \in C([0,1],\mathcal{H}): \gamma(0)=0, \mathcal{L}(\gamma(1))<0}$.  By a similar argument as the one used in Proposition \ref{Lemma energy estimate-1}, we can see that  for any $(u,v)\in \mathcal{H}$ with $(u,v)\neq (0,0)$, there exists $s_{u,v}>0$ such that
	\begin{equation}
		\begin{aligned}
			\max_{t>0} \mathcal{L}(tu,tv) &= \mathcal{L}(s_{u,v}u,s_{u,v}v).
		\end{aligned}
	\end{equation}
	Moreover, we have $(s_{u,v}u,s_{u,v}v) \in \mathcal{M}$, where
	\begin{equation}
		\mathcal{M}=\lbr{(u,v)\in \mathcal{H}\setminus \lbr{(0,0)}: \mathcal{L}^{\prime}(u,v)(u,v)=0}.
	\end{equation}
	Notice that $\theta_{i}>0$ for $i=1,2$, it is not difficult to check that
	\begin{equation} \label{f41}
		\mathcal{C}_M=\inf_{(u,v)\in \mathcal{H}\setminus \lbr{(0,0)}} \max_{t>0} \ \mathcal{L}(tu,tv)=\inf_{(u,v)\in \mathcal{M}}\mathcal{L}(u,v).
	\end{equation}
	Since $\mathcal{N}\subset \mathcal{M}$, one has that
	\begin{equation}\label{f29}
		\mathcal{C}_M\leq \mathcal{C}_{\mathcal{N}}.
	\end{equation}
	Recall \eqref{f18} -- \eqref{f17}, we can also prove that if $\beta>\max \lbr{\mu_1,\mu_2}$,
	\begin{equation} \label{f21}
		\mathcal{C}_M\leq \max_{t >0}\mathcal{L}(tw_\varepsilon,tv_\varepsilon)\leq \max_{t_1,t_2>0}\mathcal{L}(t_1w_\varepsilon,t_2v_\varepsilon)<\mathcal{A},
	\end{equation}
	where $w_\varepsilon,v_\varepsilon$  is defined in \eqref{f19}.
	
	\begin{lemma} \label{Lemma mountain pass structure}
		The functional $\mathcal{L}$ has a mountain pass geometry structure, that is,
		
		\begin{enumerate}
			\item[(i) ]  there exists $\alpha, \zeta>0$, such that $\mathcal{L}(u,v)\geq \alpha>0 $ for all $	\left\| (u,v)\right\|_{\mathcal{H}} =\zeta$;
			\item[(ii) ] there exists $(w,z) \in \mathcal{H}$, such that $\left\| (w,z)\right\|_{\mathcal{H}} \geq \zeta $  and $\mathcal{L}(w,z)<0$,
		\end{enumerate}
		where  $	\left\| (u,v)\right\|^2_{\mathcal{H}} := \int_{\Omega} \sbr{|\nabla u|^2+|\nabla v|^2}$.

	\end{lemma}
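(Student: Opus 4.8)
The plan is to verify the two mountain-pass conditions separately, disposing of the (easier) condition (ii) first and then concentrating on (i), which is the real obstacle. For (ii), I would fix any nonnegative pair $(u_0,v_0)\in\mathcal{H}$ with $u_0,v_0\not\equiv0$ and test along the ray $t\mapsto(tu_0,tv_0)$ for $t>0$. Since $u_0^+=u_0$ and $v_0^+=v_0$, the quartic block of $\mathcal{L}(tu_0,tv_0)$ is $-\tfrac{t^4}{4}\big(\mu_1|u_0|_4^4+2\beta|u_0v_0|_2^2+\mu_2|v_0|_4^4\big)$, whose coefficient is strictly positive because $\mu_1,\mu_2>0$ and $\beta>\max\{\mu_1,\mu_2\}>0$. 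The remaining terms grow at most like $t^2$ and $t^2\log t$, so the $-t^4$ term dominates and $\mathcal{L}(tu_0,tv_0)\to-\infty$ as $t\to+\infty$. Choosing $t$ large enough that simultaneously $\mathcal{L}(tu_0,tv_0)<0$ and $t\,\|(u_0,v_0)\|_{\mathcal{H}}\geq\zeta$ produces the point $(w,z)$ required in (ii).

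For (i) the difficulty is that, with $\lambda_i\in\R$ arbitrary, the quadratic form $\tfrac12|\nabla u|_2^2-\tfrac{\lambda_1}{2}|u^+|_2^2$ need not be positive definite (take $u$ near the first eigenfunction when $\lambda_1\geq\lambda_1(\Omega)$), so positivity near the origin cannot come from the Dirichlet and linear terms and must be extracted from the logarithmic terms. I would first rewrite
\[
-\frac{\theta_1}{2}\int_{\Omega}(u^+)^2(\log(u^+)^2-1)=-\frac{\theta_1}{2}\int_{\Omega}(u^+)^2\log(u^+)^2+\frac{\theta_1}{2}|u^+|_2^2,
\]
and likewise for $v$, and then apply the logarithmic Sobolev inequality recalled before \eqref{f8} with a parameter $a>0$ chosen so small that $\tfrac{a}{\pi}\theta_i<\tfrac12$ for $i=1,2$. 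This bounds $\int_{\Omega}(u^+)^2\log(u^+)^2$ from above, costing at most half of each Dirichlet term, so that after collecting terms one is left with
\[
\mathcal{L}(u,v)\geq \frac14\|(u,v)\|_{\mathcal{H}}^2+\Phi_1(|u^+|_2^2)+\Phi_2(|v^+|_2^2)-\frac{\mu_1}{4}|u^+|_4^4-\frac{\mu_2}{4}|v^+|_4^4-\frac{\beta}{2}|u^+v^+|_2^2,
\]
where $\Phi_i(t)=-\tfrac{\theta_i}{2}\,t\log t+C_i\,t$ with explicit constants $C_i$ depending only on $\lambda_i,\theta_i,a$.

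The crux is then the elementary but decisive observation that $\Phi_i(t)=t\big(-\tfrac{\theta_i}{2}\log t+C_i\big)\geq0$ as soon as $t$ is small: since $\theta_i>0$, the factor $-\tfrac{\theta_i}{2}\log t\to+\infty$ as $t\to0^+$ and therefore dominates the linear constant $C_i$ whatever its sign. This is precisely the mechanism by which the logarithmic term, rather than $\lambda_i u$, governs the local geometry. By Poincar\'e's inequality $|u^+|_2^2,|v^+|_2^2\leq C_P\|(u,v)\|_{\mathcal{H}}^2$, so prescribing $\|(u,v)\|_{\mathcal{H}}=\zeta$ small forces both arguments into the range where $\Phi_1,\Phi_2\geq0$; I would then discard these nonnegative contributions and estimate the quartic remainder through the Sobolev embedding $|u^+|_4^4\leq\mathcal{S}^{-2}|\nabla u|_2^4$ together with $|u^+v^+|_2^2\leq\tfrac12(|u^+|_4^4+|v^+|_4^4)$, obtaining $\mathcal{L}(u,v)\geq\tfrac14\zeta^2-C\zeta^4=:\alpha$. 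Shrinking $\zeta$ makes $\alpha>0$, which establishes (i) and, with (ii), completes the proof. I expect the delicate point to be the bookkeeping in choosing $a$ and $\zeta$ so that the Dirichlet loss, the linear constants $C_i$, and the smallness threshold for $\Phi_i$ are mutually compatible; everything else is routine.
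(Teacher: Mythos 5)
Your proof of (ii) is essentially the paper's: fix a positive pair, let the quartic term (whose coefficient is positive since $\beta>\max\{\mu_1,\mu_2\}$) dominate $t^2\log t^2$ as $t\to+\infty$. For (i), however, you take a genuinely different route. The paper never splits off the logarithm: it absorbs the linear term into it, writing
\begin{equation}
\frac{\lambda_1}{2}\int_{\Omega}|u^+|^2+\frac{\theta_1}{2}\int_{\Omega}(u^+)^2\bigl(\log (u^+)^2-1\bigr)=\frac{\theta_1}{2}\int_{\Omega}(u^+)^2\log\bigl(e^{\lambda_1/\theta_1-1}(u^+)^2\bigr),
\end{equation}
and then applies the pointwise inequality $s^2\log(cs^2)\leq ce^{-1}s^4$ (valid since $\theta_1>0$), so the entire linear-plus-logarithmic block is dominated by a quartic term; Sobolev embedding then yields $\mathcal{L}(u,v)\geq\frac12\|(u,v)\|_{\mathcal{H}}^2-C\|(u,v)\|_{\mathcal{H}}^4$ and (i) follows at once. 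You instead invoke the logarithmic Sobolev inequality (the one the paper uses in \eqref{f8}), pay a quarter of the Dirichlet energy, and then exploit the sign of $\Phi_i(t)=-\tfrac{\theta_i}{2}t\log t+C_it$ for small $t$, with Poincar\'e forcing $|u^+|_2^2$ into that range on a small sphere. Your argument is correct — the log-Sobolev inequality applies to $u^+\in H_0^1(\Omega)$, the choice $\tfrac{a}{\pi}\theta_i<\tfrac12$ is exactly the one the paper makes elsewhere, and $\Phi_i\geq0$ near $0$ because $\theta_i>0$ — and it has the conceptual merit of making explicit that the logarithmic term is \emph{favorable} for small $L^2$-mass, which is the phenomenon the paper emphasizes in the introduction. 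What it costs is the extra bookkeeping you yourself flag (compatibility of $a$, $\zeta$, the Poincar\'e constant and the thresholds for $\Phi_i$), all of which the paper's one-line pointwise bound avoids. Both proofs use $\theta_i>0$ in an essential way; neither extends to $\theta_i<0$.
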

	
	\begin{proof}
		Since  $\theta_i>0$, it follows from the inequality $s^2\log s^2 \leq e^{-1}s^4$ for all $s>0$ that
		\begin{equation}
			\begin{aligned}
				&\frac{\lambda_1}{2}\int_{\Omega}|u^+|^2+\frac{\theta_1}{2}\int_{\Omega}(u^+)^2(\log (u^+)^2-1)\\&=\frac{\theta_1}{2} \int_{\Omega}(u^+)^2(\log (u^+)^2+\frac{\lambda_{1}}{\theta_1}-1) \leq \frac{\theta_1}{2}\int_{\Omega}(u^+)^2(\log e^{\frac{\lambda_{1}}{\theta_1}}(u^+)^2)\\
				&\leq  \frac{\theta_1}{2} e^{\frac{\lambda_{1}}{\theta_1}-1} \int_{\Omega}|u^+|^4 \leq \frac{\theta_1}{2} e^{\frac{\lambda_{1}}{\theta_1}-1} \mathcal{S}^{-2} \sbr{\int_{\Omega} |\nabla u|^2}^2.
			\end{aligned}
		\end{equation}
		Similarly, we have
		\begin{equation}
			\frac{\lambda_2}{2}\int_{\Omega}|v^+|^2+\frac{\theta_2}{2}\int_{\Omega}(v^+)^2(\log (v^+)^2-1) \leq \frac{\theta_2}{2} e^{\frac{\lambda_{2}}{\theta_2}-1} \mathcal{S}^{-2} \sbr{\int_{\Omega} |\nabla v|^2}^2.
		\end{equation}
		Notice that $\frac{\beta}{2}\int_{\Omega}|u^+|^2|v^+|^2  \leq \frac{\beta}{4}\sbr{\int_{\Omega}|u^+|^4+|v^+|^4}$ and $ \int_{\Omega}|u^+|^4 \leq \mathcal{S}^{-2} \sbr{\int_{\Omega} |\nabla u|^2}^2$,  we have
		\begin{equation} \label{}
			\begin{aligned}
				\mathcal{L}(u,v)
				%			\frac{1}{2}\int_{\Omega} |\nabla u|^2-\frac{\lambda_1}{2}\int_{\Omega}|u^+|^2-\frac{\mu_1}{4} \int_{\Omega}|u^+|^4-\frac{\theta_1}{2}\int_{\Omega}(u^+)^2(\log (u^+)^2-1)\\&+\frac{1}{2}\int_{\Omega} |\nabla v|^2-\frac{\lambda_2}{2}\int_{\Omega}|v^+|^2-\frac{\mu_2}{4} \int_{\Omega}|v^+|^4-\frac{\theta_2}{2}\int_{\Omega}(v^+)^2(\log (v^+)^2-1)-\frac{\beta}{2}\int_{\Omega}|u^+|^2|v^+|^2 \\
				&\geq \frac{1}{2}\int_{\Omega} |\nabla u|^2-\sbr{\frac{\mu_1+\beta}{4} +\frac{\theta_1}{2} e^{\frac{\lambda_{1}}{\theta_1}-1} }\mathcal{S}^{-2} \sbr{\int_{\Omega} |\nabla u|^2}^2\\
				&+\frac{1}{2}\int_{\Omega} |\nabla v|^2-\sbr{\frac{\mu_2+\beta}{4} +\frac{\theta_2}{2} e^{\frac{\lambda_{2}}{\theta_2}-1}} \mathcal{S}^{-2} \sbr{\int_{\Omega} |\nabla v|^2}^2\\
				&\geq  \frac{1}{2} 	\left\| (u,v)\right\|_{\mathcal{H}}-C	\left\| (u,v)\right\|_{\mathcal{H}}^2,
			\end{aligned}
		\end{equation}
		which implies that there exists $\alpha>0$ and $\zeta>0$ such that $\mathcal{L}(u,v)\geq \alpha>0 $ for all $	\left\| (u,v)\right\|_{\mathcal{H}} =\zeta$.
		
		On the other hand, let $\varphi_1,\varphi_2 \in H_0^1(\Omega)\setminus \lbr{0}$ be fixed positive functions, then  for any $t>0$, we have
		\begin{equation}
			\begin{aligned}
				\mathcal{L}(t\varphi_1,t\varphi_2)&=\frac{t^2}{2}\int_{\Omega}\sbr{|\nabla \varphi_1|^2+|\nabla \varphi_2|^2}-\frac{t^4}{2} \int_{\Omega}\sbr{\mu_1|\varphi_1|^4+2\beta|\varphi_1|^2|\varphi_2|^2+\mu_2 |\varphi_2|^4}\\
				&-\frac{t^2\log t^2}{2} \int_{\Omega}\sbr{\theta_1\varphi_1^2+\theta_2\varphi_2^2}-\frac{t^2}{2}\int_{\Omega}\sbr{\theta_1\varphi_1^2\log \sbr{e^{\frac{\lambda_{1}}{\theta_1}-1}\varphi_1^2}+\theta_2\varphi_2^2\log \sbr{e^{\frac{\lambda_{2}}{\theta_2}-1}\varphi_2^2}}\\
				&\to -\infty \quad \text{ as } t\to +\infty,
			\end{aligned}
		\end{equation}
		since  $\lim_{t \to +\infty}\frac{t^4}{t^2\log t^2} =+\infty$. Therefore, we can choose $t_0>0$ large enough such that
		\begin{equation}
			\mathcal{L}(t_0\varphi_1,t_0\varphi_2)<0, \quad \text{ and } \quad 	\left\| (t_0\varphi_1,t_0\varphi_2)\right\|_{\mathcal{H}} > \zeta.
		\end{equation}
		This completes the proof.
	\end{proof}

	\begin{proposition} \label{Lemma energy estimate-3}
		Let $\beta_2$ be the largest root of the equation
		\begin{equation}
			h(\beta):=\beta^2-\frac{2}{\Lambda}\beta +\frac{\mu_1+\mu_2}{\Lambda}-\mu_1\mu_2=0,
		\end{equation}
		where $\Lambda$ is defined in \eqref{defi of Lambda}. Then $\beta_2>\max\lbr{\mu_1,\mu_2}$, and for any $\beta>\beta_2$, we have
		\begin{equation}
			\mathcal{C}_M< \min \lbr{\mathcal{C}_{\theta_1},\mathcal{C}_{\theta_2}}.
		\end{equation}
	\end{proposition}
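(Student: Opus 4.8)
The plan is to sandwich $\mathcal{C}_M$ between the explicit upper bound $\mathcal{A}$ already available from \eqref{f21} and the lower bound for the single-equation levels supplied by Proposition \ref{Lemma bound of single energy}. Concretely, I would establish the chain
\begin{equation}
	\mathcal{C}_M < \mathcal{A} \leq \frac{1}{4}\Lambda\mathcal{S}^2 \leq \min\{\mathcal{C}_{\theta_1},\mathcal{C}_{\theta_2}\},
\end{equation}
in which the first (strict) inequality is exactly \eqref{f21}, the last one follows from the lower bound $\mathcal{C}_{\theta_i} \geq \frac{\mu_i\mathcal{S}^2}{4(\mu_i+\theta_i e^{\lambda_i/\theta_i-1})^2} \geq \frac{1}{4}\Lambda\mathcal{S}^2$ provided by Proposition \ref{Lemma bound of single energy} together with the definition \eqref{defi of Lambda} of $\Lambda$, and only the middle inequality needs to be verified. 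Thus the whole statement reduces to the elementary claim $\mathcal{A} \leq \frac{1}{4}\Lambda\mathcal{S}^2$ for $\beta > \beta_2$.

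For the middle step I would first compute $\mathcal{A}$ explicitly. Since $\beta > \max\{\mu_1,\mu_2\}$, formula \eqref{f13} gives $\mathcal{A} = \frac{1}{4}(k+l)\mathcal{S}^2$ with $k,l$ solving \eqref{f14}; Cramer's rule yields $k = \frac{\beta-\mu_2}{\beta^2-\mu_1\mu_2}$ and $l = \frac{\beta-\mu_1}{\beta^2-\mu_1\mu_2}$, hence
\begin{equation}
	\mathcal{A} = \frac{\mathcal{S}^2}{4}\cdot\frac{2\beta-\mu_1-\mu_2}{\beta^2-\mu_1\mu_2}.
\end{equation}
Because $\beta^2 - \mu_1\mu_2 > 0$ in this regime, clearing the positive denominator shows that $\mathcal{A} \leq \frac{1}{4}\Lambda\mathcal{S}^2$ is equivalent to $2\beta - \mu_1 - \mu_2 \leq \Lambda(\beta^2-\mu_1\mu_2)$, i.e. to $\Lambda\, h(\beta) \geq 0$. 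As $\Lambda > 0$ and $h$ is an upward-opening parabola, this holds for every $\beta$ outside the interval bounded by its two roots, in particular for $\beta > \beta_2$.

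It then remains to prove $\beta_2 > \max\{\mu_1,\mu_2\}$, and here I would assume without loss of generality $\mu_1 \geq \mu_2$. The key observation is that $\theta_i > 0$ forces $\mu_i + \theta_i e^{\lambda_i/\theta_i-1} > \mu_i$, so each term in \eqref{defi of Lambda} is strictly smaller than $\frac{1}{\mu_i}$, whence $\frac{1}{\Lambda} > \mu_1 = \max\{\mu_1,\mu_2\}$. This immediately makes the discriminant $\frac{4}{\Lambda^2}-4\big(\frac{\mu_1+\mu_2}{\Lambda}-\mu_1\mu_2\big) = 4\big(\frac{1}{\Lambda}-\mu_1\big)\big(\frac{1}{\Lambda}-\mu_2\big)$ strictly positive, so $h$ has two distinct real roots and $\beta_2$ is well defined. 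A direct evaluation gives $h(\mu_1) = (\mu_1-\mu_2)\big(\mu_1-\frac{1}{\Lambda}\big) \leq 0$. If $\mu_1 > \mu_2$ this is strictly negative, so $\mu_1$ lies strictly between the two roots and $\beta_2 > \mu_1$; if $\mu_1 = \mu_2$ then $\mu_1$ is a root, and since the vertex sits at $\frac{1}{\Lambda} > \mu_1$ it is the smaller one, so $\beta_2 = \frac{2}{\Lambda}-\mu_1 > \mu_1$.

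The only genuinely delicate point I anticipate is the bookkeeping that recasts the defining quadratic $h$ as the inequality $\mathcal{A} \leq \frac{1}{4}\Lambda\mathcal{S}^2$ while correctly tracking the sign of $\beta^2-\mu_1\mu_2$, and noticing that the strictness of \eqref{f21} is precisely what upgrades the non-strict bound $\mathcal{A} \leq \frac{1}{4}\Lambda\mathcal{S}^2$ to the strict conclusion $\mathcal{C}_M < \min\{\mathcal{C}_{\theta_1},\mathcal{C}_{\theta_2}\}$; the remainder is routine algebra resting on \eqref{f21} and Proposition \ref{Lemma bound of single energy}.
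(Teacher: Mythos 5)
Your proposal is correct and follows essentially the same route as the paper: both use $\mathcal{C}_M<\mathcal{A}=\frac14(k+l)\mathcal{S}^2$ from \eqref{f21} and \eqref{f13}, identify $k+l=\frac{2\beta-\mu_1-\mu_2}{\beta^2-\mu_1\mu_2}$, observe that $h(\beta)>0$ for $\beta>\beta_2$ is exactly $\Lambda>k+l$, and close the chain with the lower bound $\mathcal{C}_{\theta_i}\geq\frac14\Lambda\mathcal{S}^2$ from Proposition \ref{Lemma bound of single energy}. Your treatment of $\beta_2>\max\{\mu_1,\mu_2\}$ is in fact slightly more careful than the paper's, which only evaluates $h(\max\{\mu_1,\mu_2\})\leq 0$ and thus literally yields $\beta_2\geq\max\{\mu_1,\mu_2\}$, whereas you resolve the equality case $\mu_1=\mu_2$ explicitly via the location of the vertex at $\frac{1}{\Lambda}$.
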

	
	\begin{proof}
		Without loss of generality, we assume that $ \mu_2:=\max\lbr{\mu_1,\mu_2}$, then by \eqref{defi of Lambda}, we can see that $\Lambda < \mu_2^{-1}$,  and  so $h(\mu_2)\leq 0$, which implies that $\beta\geq \mu_2=\max\lbr{\mu_1,\mu_2}$.
		
		For any $\beta>\beta_2$, we have  $h(\beta)>0$ and
		\begin{equation}\label{f20}
			\Lambda >\cfrac{2\beta-\mu_1-\mu_2}{\beta^2-\mu_1\mu_2}=k+l,
		\end{equation}
		where $k,l$ is given by \eqref{f14}. Hence, by  \eqref{f13}, \eqref{f21}, \eqref{f20} and Proposition \ref{Lemma bound of single energy}, we have
		\begin{equation}\label{f22}
			\min \lbr{\mathcal{C}_{\theta_1},\mathcal{C}_{\theta_2}} \geq \frac{1}{4} \Lambda \mathcal{S}^2 >\frac{1}{4} (k+l)\mathcal{S}^2 =\mathcal{A}>\mathcal{C}_M.
		\end{equation}
		Therefore, the proof is complete.
	\end{proof}
	
	\begin{lemma} \label{Lemma L^4 bounded in M}
		Assume that $\beta>\beta_2$, then there exist  a constant  $C_3>0$, such that for any $(u,v)\in \mathcal{M}$,  there holds
		\begin{equation}
			\int_{\Omega} (|u^+|^4+|v^+|^4)\geq C_3.
		\end{equation}
		Here, $C_3$   depends on $\beta$, $\mu_i$,  $\lambda_{i}$, $i=1,2$.
	\end{lemma}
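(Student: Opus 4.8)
The plan is to exploit the single scalar identity defining $\mathcal{M}$ together with the Sobolev inequality and the elementary estimate $s^2\log s^2\le e^{-1}s^4$, exactly in the spirit of the lower bounds already obtained in Lemma \ref{Lemma L^4 norm bounded} and Proposition \ref{Lemma bound of single energy}. First I would record that, for $(u,v)\in\mathcal{M}$, a direct computation of the Gateaux derivative turns the constraint $\mathcal{L}'(u,v)(u,v)=0$ into
\begin{equation}
|\nabla u|_2^2+|\nabla v|_2^2=\lambda_1|u^+|_2^2+\lambda_2|v^+|_2^2+\mu_1|u^+|_4^4+\mu_2|v^+|_4^4+\theta_1\int_\Omega(u^+)^2\log(u^+)^2+\theta_2\int_\Omega(v^+)^2\log(v^+)^2+2\beta|u^+v^+|_2^2.
\end{equation}

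Next I would estimate the right-hand side from above. Since $\theta_i>0$, I group each pair of linear and logarithmic terms as $\theta_i\int_\Omega(w^+)^2\log\big(e^{\lambda_i/\theta_i}(w^+)^2\big)$ and apply $s^2\log s^2\le e^{-1}s^4$ (after the rescaling $w\mapsto e^{\lambda_i/(2\theta_i)}w$), which gives $\lambda_i|w^+|_2^2+\theta_i\int_\Omega(w^+)^2\log(w^+)^2\le\theta_i e^{\lambda_i/\theta_i-1}|w^+|_4^4$ for $w=u,v$. For the coupling term I use Young's inequality, $2\beta|u^+v^+|_2^2\le 2\beta|u^+|_4^2|v^+|_4^2\le\beta\big(|u^+|_4^4+|v^+|_4^4\big)$, where the sign is legitimate because $\beta>\beta_2>0$. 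On the left-hand side the Sobolev inequality yields $\mathcal{S}|u^+|_4^2\le|\nabla u|_2^2$ and $\mathcal{S}|v^+|_4^2\le|\nabla v|_2^2$. Writing $a_i:=\mu_i+\theta_i e^{\lambda_i/\theta_i-1}$, these combine into
\begin{equation}
\mathcal{S}\left(|u^+|_4^2+|v^+|_4^2\right)\le (a_1+\beta)|u^+|_4^4+(a_2+\beta)|v^+|_4^4\le\max\{a_1+\beta,a_2+\beta\}\left(|u^+|_4^4+|v^+|_4^4\right).
\end{equation}

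To conclude I would set $T:=|u^+|_4^2+|v^+|_4^2$, $P:=|u^+|_4^4+|v^+|_4^4$, and $M:=\max\{a_1+\beta,a_2+\beta\}>0$, so that the previous display reads $\mathcal{S}T\le MP$. If $T=0$, then $u^+\equiv v^+\equiv0$, whence the identity above forces $|\nabla u|_2^2+|\nabla v|_2^2=0$, i.e. $(u,v)=(0,0)$, contradicting $(u,v)\in\mathcal{M}$; hence $T>0$. Combining $\mathcal{S}T\le MP$ with the elementary inequality $P\le T^2$ gives $\mathcal{S}\le MT$, that is $T\ge\mathcal{S}/M$; feeding this back into $P\ge\mathcal{S}T/M$ produces
\begin{equation}
\int_\Omega\left(|u^+|^4+|v^+|^4\right)=P\ge\frac{\mathcal{S}^2}{M^2}=\frac{\mathcal{S}^2}{\max\{a_1+\beta,a_2+\beta\}^2}=:C_3>0,
\end{equation}
a positive constant depending only on $\beta,\mu_i,\lambda_i,\theta_i$, as claimed.

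The argument is short, and the only genuine subtlety is the non-triviality step: because $\mathcal{M}$ imposes only the single constraint $\mathcal{L}'(u,v)(u,v)=0$ (and not separate conditions on $u$ and $v$), one must invoke that very identity to exclude $u^+\equiv v^+\equiv0$, rather than any independent control on the two components. I also note that the hypothesis $\beta>\beta_2$ enters here only through $\beta>0$, which secures the correct sign in the Young estimate of the coupling term; the sharper information $\beta>\beta_2$ is used elsewhere, namely in the energy comparison of Proposition \ref{Lemma energy estimate-3}, and is not needed for this coercivity-type lower bound.
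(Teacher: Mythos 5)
Your proof is correct and follows essentially the same route as the paper's: Sobolev inequality on the left, the bound $s^2\log s^2\le e^{-1}s^4$ for the combined linear and logarithmic terms, and Young's inequality for the coupling term, yielding $\mathcal{S}\bigl(|u^+|_4^2+|v^+|_4^2\bigr)\le C\bigl(|u^+|_4^4+|v^+|_4^4\bigr)$. The only difference is that you spell out the final step (the exclusion of $u^+\equiv v^+\equiv 0$ via the constraint identity, and the inequality $P\le T^2$) which the paper leaves implicit.
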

	\begin{proof}
		%	Since \eqref{f22} holds,   we can see from \eqref{f1} and $\theta_1,\theta_2,\beta>0$ that
		%	\begin{equation}
		%		|u^+|^4_4\leq 2\mu_1^{-1}( k+l)\mathcal{S}^2 \ \ \text{ and } \ \ 	|v^+|^4_4\leq 2\mu_2^{-1}(  k+l)\mathcal{S}^2 .
		%	\end{equation}
		%Therefore, there exists a constant $C_4>0$ such that $\int_{\Omega} (|u^+|^4+|v^+|^4) \leq C_4.$
		%
		%On the other hand,
		Since $(u,v)\in \mathcal{M}$,  we have
		\begin{equation}
			\begin{aligned}
				\mathcal{S}(|u^+|_4^2+|v^+|_4^2)& \leq |\nabla u|_2^2+|\nabla v|_2^2\\&=\mu_1|u^+|_4^4+2\beta|u^+v^+|_2^2+\mu_2|v^+|_4^4 +\theta_1\int_{\Omega}( u^+)^2\log (e^{\frac{\lambda_{1}}{\theta_1}}(u^+)^2)+\theta_2\int_{\Omega}( v^+)^2\log (e^{\frac{\lambda_{2}}{\theta_2}}(v^+)^2)\\
				&\leq (\mu_1+\beta+\theta_1e^{\frac{\lambda_{1}}{\theta_1}-1} )|u^+|^4+(\mu_2+\beta+\theta_2e^{\frac{\lambda_{2}}{\theta_2}-1})|v^+|_4^4\\
				&\leq C(|u^+|_4^4+|v^+|_4^4).
			\end{aligned}
		\end{equation}
		Therefore, there exists a constant $C_3>0$ such that $\int_{\Omega} (|u^+|^4+|v^+|^4) \geq C_3.$ This completes the proof.
	\end{proof}

	%
	%
	%Then by the Br\'{e}zis-Lieb Lemma (see \cite{Brezis Lieb lemma} and  \cite[Lemma 3.3]{Zou 2015}). we have
	%\begin{equation}\label{f36}
	%	\begin{aligned}
	%		&|u_n^+|^4_4=|u^+|_4^4+|w_n^+|_4^4+o_n(1), \quad |v_n^+|^4_4=|v^+|_4^4+|z_n^+|_4^4+o_n(1),\\
	%		&|u_n^+v_n^+|_2^2=|u^+v^+|_2^2+|w_n^+z_n^+|_2^2+o_n(1).
	%	\end{aligned}
	%\end{equation}	
	%Since $(u_n,v_n)\in \mathcal{N}$ and $\mathcal{L}^\prime(u,v)=0$, 	by \eqref{f9} and Lemma \ref{Lemma convergence of logarithmic}, we have
	%	\begin{equation} \label{f37}
	%		\begin{aligned}
	%			|\nabla w_n|_2^2=\mu_1|w_n^+|_4^4+\beta|w_n^+z_n^+|_2^2+o_n(1), \quad 	|\nabla z_n|_2^2=\mu_2|z_n^+|_4^4+\beta|w_n^+z_n^+|_2^2+o_n(1).
	%		\end{aligned}
	%\end{equation}
	%By a direct calculation, one gets
	%\begin{equation}\label{f38}
	%	\mathcal{L}(u_n,v_n)=\mathcal{L}(u,v)+\frac{1}{4}\int_{\Omega}|\nabla w_n|^2+\frac{1}{4}\int_{\Omega}|\nabla z_n|^2+o_n(1).
	%\end{equation}
	%Passing to subsequence, we may assume that
	%\begin{equation}
	%	\int_{\Omega}|\nabla w_n|^2=k_1+o_n(1), \quad \int_{\Omega}|\nabla z_n|^2=k_2+o_n(1).
	%\end{equation}	
	%Letting $n\to +\infty$ in \eqref{f38}, we have
	%\begin{equation}\label{f39}
	%	0 \leq \mathcal{L}(u,v) \leq \mathcal{L}(u,v)+\frac{1}{4}k_1+\frac{1}{4}k_2=\lim_{n\to \infty} 	\mathcal{L}(u_n,v_n) =\mathcal{C}.
	%\end{equation}

	\begin{proof}[\bf Proof of (3) in  Theorem \ref{Theorem-1}]
		Let $\beta>\beta_2$. By Lemma \ref{Lemma mountain pass structure} and the mountain pass theorem (see  \cite{Ambrosotti=JFA=1973,william=1996}), there exists a sequence $\lbr{(u_n,v_n)}\subset \mathcal{H}$ such that
		\begin{equation} \label{f35}
			\lim_{n\to \infty} \mathcal{L}(u_n,v_n)=\mathcal{C}_M, \quad \lim_{n\to \infty} \mathcal{L}^\prime(u_n,v_n)=0.
		\end{equation}
		By \eqref{f22}, we know that $\mathcal{L}(u_n,v_n)\leq  \frac{1}{2}(k+l)\mathcal{S}^2$ for $n$ large enough. Then we can see  from \eqref{f23} and  \eqref{f8} that $\lbr{(u_n,v_n)}$ is bounded in $\mathcal{H}$. So, we may assume that 	\begin{equation}
			\sbr{u_n,v_n} \rightharpoonup (u,v) \text{ weakly in } \mathcal{H}.
		\end{equation}
		%	Passing to subsequence, we may also  assume that
		%\begin{equation}
		%	\begin{aligned}
		%		&u_n \rightharpoonup u, \quad v_n\rightharpoonup v \ \text{ weakly in } L^4(\Omega),\\
		%		&u_n \to u, \quad v_n\to v \ \text{ strongly in } L^p(\Omega) \text{ for } 2\leq p<4,\\
		%		&u_n \to u, \quad v_n \to v \ \text{ almost everywhere in } \Omega.
		%	\end{aligned}
		%\end{equation}
		%By  using the inequality $|s^2\log s^2| \leq Cs^{2-\tau}+Cs^{2+\tau}$, $\tau\in \sbr{0,1}$ and the dominated convergence theorem, one gets
		%\begin{equation}
		%	\lim_{n\to \infty} \int_{\Omega} u_n^+\varphi^+\log(u_n^+)^2 =\int_{\Omega} u^+\varphi^+\log (u^+)^2 \ \  \text{ for any } \varphi\in C_0^\infty(\Omega) .
		%\end{equation}
		Let $w_n=u_n-u$ and $z_n=v_n-v$.
		% By the Br\'{e}zis-Lieb Lemma (see \cite{Brezis Lieb lemma} and  \cite[Lemma 3.3]{Zou 2015}), we have
		%	\begin{equation}\label{f36}
		%		\begin{aligned}
		%			&|u_n^+|^4_4=|u^+|_4^4+|w_n^+|_4^4+o_n(1), \quad |v_n^+|^4_4=|v^+|_4^4+|z_n^+|_4^4+o_n(1),\\
		%			&|u_n^+v_n^+|_2^2=|u^+v^+|_2^2+|w_n^+z_n^+|_2^2+o_n(1).
		%		\end{aligned}
		%	\end{equation}	
		%	Since $(u_n,v_n)\in \mathcal{M}$ and $\mathcal{L}^\prime(u,v)=0$, 	by \eqref{f9} and Lemma \ref{Lemma convergence of logarithmic}, we have
		For simplicity,  we still use the same symbol as  in the proof of (1) and (2) in Theorem \ref{Theorem-1}. So,
		%\begin{equation} \label{f24}
		%	\begin{aligned}
		%		|\nabla w_n|_2^2=\mu_1|w_n^+|_4^4+\beta|w_n^+z_n^+|_2^2+o_n(1), \quad 	|\nabla z_n|_2^2=\mu_2|z_n^+|_4^4+\beta|w_n^+z_n^+|_2^2+o_n(1).
		%	\end{aligned}
		%\end{equation}
		%Also,
		we have $\mathcal{L}^\prime (u,v)=0$ and
		\begin{equation}\label{f28}
			0 \leq \mathcal{L}(u,v) \leq \mathcal{L}(u,v)+\frac{1}{4}(k_1+k_2)=\lim_{n\to \infty} 	\mathcal{L}(u_n,v_n) =\mathcal{C}_M<\mathcal{A}.
		\end{equation}
		Next,  we will show that $u\not \equiv 0$ and $v\not \equiv 0$.	
		
		{\bf Case 1.} $u\equiv0$ and $v\equiv 0$.
		
		Firstly, we deduce from \eqref{f1}, \eqref{f41} and  Lemma \ref{Lemma L^4 bounded in M} that   $\mathcal{C}_M>0$. Then by \eqref{f28}, we have   $k_1+k_2=4\mathcal{C}_M>0$.  Then by a similar   argument as used in the proof of \cite[Theorem 1.3]{Zou 2012}, we can see that
		\begin{equation}
			\mathcal{C}_M\geq \mathcal{A},
		\end{equation}
		a contradiction with \eqref{f21}. Therefore, Case 1 is impossible.
		
		{\bf Case 2.} $u\equiv0$, $v\not \equiv 0$ or $u \not \equiv0$, $v\equiv 0$.
		
		Without loss of generality, we may assume that $u\equiv0$, $v\not \equiv 0$.  Notice that $v$ is a solution of $-\Delta w =\lambda_{2}w +\mu_2|w|^2w+\theta_2w\log w^2$, we have $\mathcal{L}(0,v)\geq \mathcal{C}_{\theta_2}$. Therefore, by \eqref{f28} we have that 	$\mathcal{C}_M\geq \mathcal{L}(0,v)\geq  \mathcal{C}_{\theta_2}  $, which is a contradiction with Proposition \ref{Lemma energy estimate-3}. Therefore, Case 2 is impossible.
		
		Since Case 1 and 2 are both impossible, we get that $u\not \equiv 0$  and $v\not \equiv 0$. Since  $\mathcal{L}^\prime(u,v)=0$, we can see that $\sbr{u,v}\in \mathcal{N}$. Combining with this fact,  we deduce from \eqref{f29} and \eqref{f28} that $\mathcal{C}_M\leq \mathcal{C}_{\mathcal{N}}\leq \mathcal{L}(u,v)\leq \mathcal{C}_M$.  Hence, $\mathcal{L}(u,v)=\mathcal{C}_M=\mathcal{C}_{\mathcal{N}}$. By $\mathcal{L}^\prime (u,v)=0$ , we know that $(u,v)$ is a  least energy solution of \eqref{System1}. By a similar argument as used in the proof of (1)-(2) in Theorem \ref{Theorem-1}, we can show that $u,v>0$ and $u,v \in C^2(\Omega)$.  This completes the proof.
	\end{proof}

	\section{Proof of Theorem \ref{Theorem-2}}  \label{Sect4}
	
	\begin{lemma} \label{lm}
		When $\beta < 0$, we assume that one of the following holds:
		\begin{itemize}
			\item[(i)] $(\lambda_1,\mu_1,\theta_1; \lambda_2,\mu_2,\theta_2) \in A_1$,
			\item[(ii)] $(\lambda_1,\mu_1,\theta_1; \lambda_2,\mu_2,\theta_2) \in A_2$,
			\item[(iii)] $(\lambda_1,\mu_1,\theta_1; \lambda_2,\mu_2,\theta_2) \in A_3$;
		\end{itemize}
	    when $\beta > 0$, we assume that there exists $\epsilon > 0$ such that one of the following holds:
	    \begin{itemize}
	    	\item[(iv)] $(\lambda_1,\mu_1+\beta\epsilon,\theta_1; \lambda_2,\mu_2+\frac{\beta}{\epsilon},\theta_2) \in A_1$,
	    	\item[(v)] $(\lambda_1,\mu_1+\beta\epsilon,\theta_1; \lambda_2,\mu_2+\frac{\beta}{\epsilon},\theta_2) \in A_2$,
	    	\item[(vi)] $(\lambda_1,\mu_1+\beta\epsilon,\theta_1; \lambda_2,\mu_2+\frac{\beta}{\epsilon},\theta_2) \in A_3$.
	    \end{itemize}
    Then there exist $\delta, \rho > 0$ such that $\mathcal{L}(u,v) \geq \delta$ for all $\sqrt{|\nabla u|_2^2+|\nabla v|_2^2} = \rho$.
	\end{lemma}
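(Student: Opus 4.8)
The plan is to reduce the two–variable estimate to a single scalar inequality in $\rho=\sqrt{|\nabla u|_2^2+|\nabla v|_2^2}$, obtained by bounding each term of $\mathcal{L}$ from below. The first step is to dispose of the coupling term. When $\beta<0$ the term $-\frac{\beta}{2}\int_\Omega|u^+|^2|v^+|^2\geq 0$ may simply be discarded. When $\beta>0$ I would instead apply Young's inequality $|u^+|^2|v^+|^2\leq \frac{\epsilon}{2}|u^+|^4+\frac{1}{2\epsilon}|v^+|^4$, so that $-\frac{\beta}{2}\int_\Omega|u^+|^2|v^+|^2\geq -\frac{\beta\epsilon}{4}|u^+|_4^4-\frac{\beta}{4\epsilon}|v^+|_4^4$; this turns the coupling into a shift $\mu_1\mapsto\mu_1+\beta\epsilon$, $\mu_2\mapsto\mu_2+\frac{\beta}{\epsilon}$ of the quartic coefficients. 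This is exactly why hypotheses (iv)--(vi) are hypotheses (i)--(iii) read with these shifted masses, and after this step the two sign cases are handled identically.

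The key new ingredient is the pointwise control of the logarithmic term. Grouping the lower–order part as $-\frac{\theta_i}{2}\int_\Omega(u^+)^2\big(\log(u^+)^2-1+\gamma_i\big)$, I would use the elementary inequality $t(\log t-1+\gamma)\geq -e^{-\gamma}$ for all $t\geq 0$ (the minimum being attained at $t=e^{-\gamma}$); since $\theta_i<0$ this produces a lower bound of constant form $\frac{\theta_i}{2}e^{-\gamma_i}|\Omega|$. The choice of $\gamma_i$ is dictated by $\lambda_i$: when $\lambda_i\in[0,\lambda_1(\Omega))$ I keep $\lambda_i$ separate and control $-\frac{\lambda_i}{2}|u^+|_2^2$ by Poincar\'e's inequality $|u^+|_2^2\leq\lambda_1(\Omega)^{-1}|\nabla u|_2^2$, taking $\gamma_i=0$; when $\lambda_i\in\mathbb{R}$ is unrestricted I instead absorb $-\frac{\lambda_i}{2}|u^+|_2^2$ into the logarithm, taking $\gamma_i=\lambda_i/\theta_i$. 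These two options correspond precisely to the distinction between $A_1$, $A_2$ and $A_3$.

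Together with the Sobolev inequality $|u^+|_4^4\leq \mathcal{S}^{-2}|\nabla u|_2^4$ (valid since $|\nabla u^+|_2\leq|\nabla u|_2$), these reductions give
\[
\mathcal{L}(u,v)\geq \tfrac{c_1}{2}|\nabla u|_2^2+\tfrac{c_2}{2}|\nabla v|_2^2-\tfrac{\mu_1}{4}\mathcal{S}^{-2}|\nabla u|_2^4-\tfrac{\mu_2}{4}\mathcal{S}^{-2}|\nabla v|_2^4+C_\theta,
\]
where $c_i\in\{1,\tfrac{\lambda_1(\Omega)-\lambda_i}{\lambda_1(\Omega)}\}>0$ and $C_\theta$ is the constant from the log terms (and $\mu_i$ is replaced by the shifted mass when $\beta>0$). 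Using $\tfrac{c_1}{2}a^2+\tfrac{c_2}{2}b^2\geq\tfrac{c_{\min}}{2}(a^2+b^2)$ and $a^4+b^4\leq(a^2+b^2)^2$ with $c_{\min}=\min\{c_1,c_2\}$ and $\mu_{\max}=\max\{\mu_1,\mu_2\}$, this collapses to the scalar bound $\mathcal{L}(u,v)\geq\psi(\rho):=\tfrac{c_{\min}}{2}\rho^2-\tfrac{\mu_{\max}}{4}\mathcal{S}^{-2}\rho^4+C_\theta$. Maximizing the quartic $\psi$ over $\rho$ gives $\max_\rho\psi=\tfrac{c_{\min}^2\mathcal{S}^2}{4\mu_{\max}}+C_\theta$ at $\rho^2=c_{\min}\mathcal{S}^2/\mu_{\max}$, and a direct check shows that $4\max_\rho\psi$ equals exactly the defining quantity of the relevant set $A_1$, $A_2$ or $A_3$. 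Hence this maximum is strictly positive under the hypothesis, and choosing $\rho$ to be its maximizer and $\delta:=\max_\rho\psi>0$ finishes the argument, since $\psi$ depends only on $\rho$ and not on the direction $(u,v)$.

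The only genuinely delicate point is the logarithmic term: one must split into the two regimes according to whether $\lambda_i$ is subcritical (Poincar\'e, $\gamma_i=0$) or arbitrary (absorbed, $\gamma_i=\lambda_i/\theta_i$), and verify that the resulting $C_\theta$ reproduces after multiplication by $4$ the exact constants $2(\theta_1+\theta_2)|\Omega|$, $2(\theta_1+\theta_2e^{-\lambda_2/\theta_2})|\Omega|$ and $2(\theta_1e^{-\lambda_1/\theta_1}+\theta_2e^{-\lambda_2/\theta_2})|\Omega|$ appearing in $A_1$, $A_2$, $A_3$. Everything else reduces to the routine optimization of a one–variable quartic.
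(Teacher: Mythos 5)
Your proposal is correct and follows essentially the same route as the paper: discard the coupling when $\beta<0$, use Young's inequality to shift $\mu_1\mapsto\mu_1+\beta\epsilon$, $\mu_2\mapsto\mu_2+\beta/\epsilon$ when $\beta>0$, control the logarithmic term via $t(\log t-1+\gamma)\geq -e^{-\gamma}$ with $\gamma_i=0$ (plus Poincar\'e) or $\gamma_i=\lambda_i/\theta_i$ according to the case, apply Sobolev to the quartic, and maximize the resulting one-variable function of $\rho$, whose maximum is one quarter of the defining quantity of $A_1$, $A_2$ or $A_3$. This matches the paper's proof (which cites Deng--He--Pan, Lemma 2.1, for the scalar estimates but writes out the same bounds and the same choices of $\rho$ and $\delta$).
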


    \begin{proof}
    	{\bf Case 1.} $\beta < 0$.
    	
        In this case, we have
        \begin{align}
        	\mathcal{L}(u,v)&\geq\frac{1}{2}\int_{\Omega} |\nabla u|^2-\frac{\lambda_1}{2}\int_{\Omega}|u^+|^2-\frac{\mu_1}{4} \int_{\Omega}|u^+|^4-\frac{\theta_1}{2}\int_{\Omega}(u^+)^2(\log (u^+)^2-1)\\&+\frac{1}{2}\int_{\Omega} |\nabla v|^2-\frac{\lambda_2}{2}\int_{\Omega}|v^+|^2-\frac{\mu_2}{4} \int_{\Omega}|v^+|^4-\frac{\theta_2}{2}\int_{\Omega}(v^+)^2(\log (v^+)^2-1).
        \end{align}
        Like the proof of \cite[Lemma 2.1]{Deng-He-Pan=Arxiv=2022}, one can obtain:

    	(i) If $(\lambda_1,\mu_1,\theta_1; \lambda_2,\mu_2,\theta_2) \in A_1$, then
    	\begin{align}
    		&\mathcal{L}(u,v)\\
    		\geq& \frac12\frac{\lambda_1(\Omega)-\lambda_1}{\lambda_1(\Omega)}|\nabla u|_2^2 - \frac{\mu_1}{4S^2}|\nabla u|_2^4 + \frac{\theta_1}{2}|\Omega| + \frac12\frac{\lambda_1(\Omega)-\lambda_2}{\lambda_1(\Omega)}|\nabla v|_2^2 - \frac{\mu_2}{4S^2}|\nabla v|_2^4 + \frac{\theta_2}{2}|\Omega| \\
    		\geq& \frac12\min\{\frac{\lambda_1(\Omega)-\lambda_1}{\lambda_1(\Omega)},\frac{\lambda_1(\Omega)-\lambda_2}{\lambda_1(\Omega)}\}(|\nabla u|_2^2+|\nabla v|_2^2) - \frac{\max\{\mu_1,\mu_2\}}{4S^2}(|\nabla u|_2^2+|\nabla v|_2^2)^2 + \frac{\theta_1+\theta_2}{2}|\Omega|.
    	\end{align}
    	Let $\rho = \sqrt{\frac{\min\{\lambda_1(\Omega)-\lambda_1,\lambda_1(\Omega)-\lambda_2\}}{\lambda_1(\Omega)\max\{\mu_1,\mu_2\}}}S$ and $\delta = \frac14 \frac{(\min\{\lambda_1(\Omega)-\lambda_1,\lambda_1(\Omega)-\lambda_2\})^2}{\lambda_1(\Omega)^2\max\{\mu_1,\mu_2\}}S^2 + \frac{\theta_1+\theta_2}{2}|\Omega|$, we deduce that
    	\begin{align}
    		\mathcal{L}(u,v) \geq \frac14 \frac{(\min\{\lambda_1(\Omega)-\lambda_1,\lambda_1(\Omega)-\lambda_2\})^2}{\lambda_1(\Omega)^2\max\{\mu_1,\mu_2\}}S^2 + \frac{\theta_1+\theta_2}{2}|\Omega|= \delta > 0.
    	\end{align}
    	
    	(ii) If $(\lambda_1,\mu_1,\theta_1; \lambda_2,\mu_2,\theta_2) \in A_2$, then
    	\begin{align}
    		&\mathcal{L}(u,v)\\
    		\geq& \frac12\frac{\lambda_1(\Omega)-\lambda_1}{\lambda_1(\Omega)}|\nabla u|_2^2 - \frac{\mu_1}{4S^2}|\nabla u|_2^4 + \frac{\theta_1}{2}|\Omega| + \frac12|\nabla v|_2^2 - \frac{\mu_2}{4S^2}|\nabla v|_2^4 + \frac{\theta_2}{2}e^{-\frac{\lambda_{2}}{\theta_{2}}}|\Omega| \\
    		\geq& \frac12\frac{\lambda_1(\Omega)-\lambda_1}{\lambda_1(\Omega)}(|\nabla u|_2^2+|\nabla v|_2^2) - \frac{1}{4S^2}\max\{\mu_1,\mu_2\}(|\nabla u|_2^2+|\nabla v|_2^2)^2 + \frac{\theta_1+\theta_2e^{-\frac{\lambda_{2}}{\theta_{2}}}}{2}|\Omega|.
    	\end{align}
    	Let $\rho = \sqrt{\frac{\lambda_1(\Omega)-\lambda_1}{\lambda_1(\Omega)\max\{\mu_1,\mu_2\}}}S$ and $\delta = \frac14 \frac{(\lambda_1(\Omega)-\lambda_1)^2}{\lambda_1(\Omega)^2\max\{\mu_1,\mu_2\}}S^2 +  \frac{\theta_1+\theta_2e^{-\frac{\lambda_{2}}{\theta_{2}}}}{2}|\Omega|$, we deduce that
    	\begin{align}
    		\mathcal{L}(u,v) \geq \frac14 \frac{(\lambda_1(\Omega)-\lambda_1)^2}{\lambda_1(\Omega)^2\max\{\mu_1,\mu_2\}}S^2 +  \frac{\theta_1+\theta_2e^{-\frac{\lambda_{2}}{\theta_{2}}}}{2}|\Omega| = \delta > 0.
    	\end{align}

    	(iii) If $(\lambda_1,\mu_1,\theta_1; \lambda_2,\mu_2,\theta_2) \in A_3$, then
    	\begin{align}
    		\mathcal{L}(u,v) \geq& \frac12|\nabla u|_2^2 - \frac{\mu_1}{4S^2}|\nabla u|_2^4 + \frac{\theta_1}{2}e^{-\frac{\lambda_{1}}{\theta_{1}}}|\Omega| + \frac12|\nabla v|_2^2 - \frac{\mu_2}{4S^2}|\nabla v|_2^4 + \frac{\theta_2}{2}e^{-\frac{\lambda_{2}}{\theta_{2}}}|\Omega| \\
    		\geq& \frac12(|\nabla u|_2^2+|\nabla v|_2^2) - \frac{1}{4S^2}\max\{\mu_1,\mu_2\}(|\nabla u|_2^2+|\nabla v|_2^2)^2 + \frac{\theta_1e^{-\frac{\lambda_{1}}{\theta_{1}}}+\theta_2e^{-\frac{\lambda_{2}}{\theta_{2}}}}{2}|\Omega|.
    	\end{align}
    	Let $\rho = \sqrt{\frac{1}{\max\{\mu_1,\mu_2\}}}S$ and $\delta = \frac14 \frac{1}{\max\{\mu_1,\mu_2\}}S^2 + \frac{\theta_1e^{-\frac{\lambda_{1}}{\theta_{1}}}+\theta_2e^{-\frac{\lambda_{2}}{\theta_{2}}}}{2}|\Omega|$, we deduce that
    	\begin{align}
    		\mathcal{L}(u,v) \geq \frac14 \frac{1}{\max\{\mu_1,\mu_2\}}S^2 + \frac{\theta_1e^{-\frac{\lambda_{1}}{\theta_{1}}}+\theta_2e^{-\frac{\lambda_{2}}{\theta_{2}}}}{2}|\Omega| = \delta > 0.
    	\end{align}

    	{\bf Case 2.} $\beta > 0$.
    	
    	Since $\beta\int_\Omega|u_n^+|^2|v_n^+|^2 \leq \frac12\beta\int_\Omega(\epsilon|u_n^+|^4+\frac{1}{\epsilon}|v_n^+|^4)$, we have
    	\begin{align}
    		\mathcal{L}(u,v)&\geq\frac{1}{2}\int_{\Omega} |\nabla u|^2-\frac{\lambda_1}{2}\int_{\Omega}|u^+|^2-\frac{\mu_1+\beta\epsilon}{4} \int_{\Omega}|u^+|^4-\frac{\theta_1}{2}\int_{\Omega}(u^+)^2(\log (u^+)^2-1)\\&+\frac{1}{2}\int_{\Omega} |\nabla v|^2-\frac{\lambda_2}{2}\int_{\Omega}|v^+|^2-\frac{\mu_2+\frac{\beta}{\epsilon}}{4} \int_{\Omega}|v^+|^4-\frac{\theta_2}{2}\int_{\Omega}(v^+)^2(\log (v^+)^2-1).
    	\end{align}
        Then similar to the Case 1, we can complete the proof.
    \end{proof}

    \begin{lemma} \label{energy level}
    	Under the hypotheses of Lemma \ref{lm}. Let $\mathcal{C}_\rho$ be given by Theorem \ref{Theorem-2}. Then $-\infty < \mathcal{C}_\rho < 0$.
    \end{lemma}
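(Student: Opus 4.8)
The plan is to prove the two bounds separately. Boundedness from below, $\mathcal{C}_\rho>-\infty$, will follow from the Sobolev embedding on the bounded set $B_\rho$, while the strict negativity $\mathcal{C}_\rho<0$ comes from the behaviour of the logarithmic terms near the origin, which is precisely where the standing assumption $\theta_1,\theta_2<0$ (built into $A_1,A_2,A_3$) is used.

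For the lower bound, take any $(u,v)\in B_\rho$, so that $|\nabla u|_2^2+|\nabla v|_2^2<\rho^2$. By the Sobolev and Poincar\'e inequalities, $|u^+|_2,|u^+|_4,|v^+|_2,|v^+|_4$ are bounded by a constant depending only on $\rho$ and $\Omega$. Consequently the terms $-\tfrac{\lambda_i}{2}|u^+|_2^2$ and $-\tfrac{\mu_i}{4}|u^+|_4^4$ are bounded below, and so is the coupling term $-\tfrac{\beta}{2}\int_\Omega |u^+|^2|v^+|^2$ (it is nonnegative when $\beta<0$, and bounded by $\tfrac{|\beta|}{2}|u^+|_4^2|v^+|_4^2$ via H\"older when $\beta>0$). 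For the logarithmic part I would use the elementary inequality $s(\log s-1)\ge -1$ for $s\ge 0$ with $s=(u^+)^2$, giving $\int_\Omega (u^+)^2(\log(u^+)^2-1)\ge -|\Omega|$; since $\theta_i<0$ we have $-\tfrac{\theta_i}{2}>0$, so $-\tfrac{\theta_i}{2}\int_\Omega (u^+)^2(\log(u^+)^2-1)\ge \tfrac{\theta_i}{2}|\Omega|$, a finite lower bound. Collecting all contributions gives $\mathcal{L}(u,v)\ge -C$ uniformly on $B_\rho$, hence $\mathcal{C}_\rho>-\infty$.

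For the upper bound I would exhibit one admissible point of negative energy. Fix positive functions $\varphi_1,\varphi_2\in H_0^1(\Omega)$ and expand, for $0<t<1$,
\begin{equation}
\mathcal{L}(t\varphi_1,t\varphi_2)=-\frac{t^2\log t^2}{2}\bigl(\theta_1|\varphi_1|_2^2+\theta_2|\varphi_2|_2^2\bigr)+O(t^2)\qquad\text{as }t\to0^+,
\end{equation}
where the $O(t^2)$ absorbs the Dirichlet, linear and quartic terms together with the $t$-independent part of the logarithmic terms. Because $\theta_1,\theta_2<0$, the coefficient $-\tfrac12(\theta_1|\varphi_1|_2^2+\theta_2|\varphi_2|_2^2)$ is strictly positive, while $t^2\log t^2<0$ for $0<t<1$; thus the leading term is negative and of order $t^2|\log t|$, which dominates the $O(t^2)$ remainder. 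Hence $\mathcal{L}(t\varphi_1,t\varphi_2)\to0^-$ and in particular $\mathcal{L}(t\varphi_1,t\varphi_2)<0$ for $t$ small enough. For such $t$ one also has $\sqrt{|\nabla(t\varphi_1)|_2^2+|\nabla(t\varphi_2)|_2^2}=t\sqrt{|\nabla\varphi_1|_2^2+|\nabla\varphi_2|_2^2}<\rho$, so $(t\varphi_1,t\varphi_2)\in B_\rho$, and therefore $\mathcal{C}_\rho<0$.

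Both halves are essentially routine, and the only delicate point is the sign bookkeeping of the logarithmic term: one must check that its singular part $-\tfrac12 t^2\log t^2(\theta_1|\varphi_1|_2^2+\theta_2|\varphi_2|_2^2)$ is negative for small $t$ and of larger order than every other term. This is the structural heart of the lemma --- near the origin the logarithm beats the quadratic Dirichlet energy, so $(0,0)$ is not a local minimum and configurations of negative energy accumulate at it, exactly as announced in the introduction for the regime $\theta_i<0$.
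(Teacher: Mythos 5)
Your proof is correct and follows essentially the same approach as the paper: the lower bound comes from Sobolev embedding on $B_\rho$ together with the elementary bound on $s(\log s-1)$, and the strict negativity comes from scaling a test function by small $t$ so that the $t^2\log t^2$ term, whose coefficient has a definite sign because $\theta_i<0$, dominates. The only cosmetic difference is that the paper tests with the semi-trivial pair $(tu,0)$ while you scale both components, and you spell out the lower bound that the paper leaves as a remark; both variants are fine.
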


    \begin{proof}
    	When $|\nabla u|_2^2+|\nabla v|_2^2 < \rho^2$, it is not difficult to verify that $\mathcal{L}(u,v) > -\infty$, yielding that $\mathcal{C}_\rho > -\infty$. We show $\mathcal{C}_\rho < 0$ here. Note that $\theta_1 < 0$. Fix $(u,0)$ such that $|\nabla u|_2 < \rho$. Then we consider $\mathcal{L}(tu,0)$ with $t < 1$:
    	\begin{align}
    		&\mathcal{L}(tu,0) \\
    		=& t^2\left(\frac{1}{2}\int_{\Omega} |\nabla u|^2-\frac{\lambda_1}{2}\int_{\Omega}|u^+|^2-\frac{\mu_1}{4}t^2 \int_{\Omega}|u^+|^4-\frac{\theta_1}{2}\int_{\Omega}(u^+)^2(\log (u^+)^2-1)-\theta_1\log t\int_{\Omega}(u^+)^2\right).
    	\end{align}
    Choose $u$ such that $\int_{\Omega}(u^+)^2 > 0$. Then for small $t$ we have $\mathcal{L}(tu,0) < 0$. This completes the proof.
    \end{proof}

    \begin{lemma}[Boundedness of (PS) sequence] \label{boundedness}
    	Let $\{(u_n,v_n)\}$ be a (PS)$_c$ sequence and $\theta_i < 0, i = 1,2$. Then $\{(u_n,v_n)\}$ is bounded in $\mathcal{H}$.
    \end{lemma}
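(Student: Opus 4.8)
The plan is to derive a coercive quadratic bound by testing the two Palais--Smale conditions against the scaling direction $(u_n,v_n)$ itself. Since $\mathcal{L}(u_n,v_n)=c+o(1)$ and $\|\mathcal{L}'(u_n,v_n)\|_{\mathcal{H}^*}=o(1)$, so that $\mathcal{L}'(u_n,v_n)(u_n,v_n)=o(1)\|(u_n,v_n)\|_{\mathcal{H}}$, I would form the combination $4\mathcal{L}(u_n,v_n)-\mathcal{L}'(u_n,v_n)(u_n,v_n)$. The factor $4$ is dictated by the critical growth: it is the unique constant for which the quartic terms $\mu_i|u^+|_4^4,\ \mu_i|v^+|_4^4$ and the coupling term $\beta\int_\Omega(u^+)^2(v^+)^2$ cancel identically. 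In particular $\beta$ disappears from the resulting identity, which is exactly why no restriction on $\beta$ enters the statement.

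Carrying out the computation, and using $\frac{d}{ds}\big(s^2(\log s^2-1)\big)=2s\log s^2$ for the derivative of the logarithmic part, I expect to reach
\[
\|(u_n,v_n)\|_{\mathcal{H}}^2 = 4c+o(1)+o(1)\|(u_n,v_n)\|_{\mathcal{H}} + g_1(u_n)+g_2(v_n),
\]
where, after collecting the surviving logarithmic and $L^2$ terms,
\[
g_1(u)=\theta_1\int_\Omega (u^+)^2\log\!\big(e^{\lambda_1/\theta_1-2}(u^+)^2\big),\qquad g_2(v)=\theta_2\int_\Omega (v^+)^2\log\!\big(e^{\lambda_2/\theta_2-2}(v^+)^2\big).
\]
Everything then reduces to bounding $g_1,g_2$ from above by constants depending only on the data.

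The key step is the estimate of $g_i$. Writing $w=e^{(\lambda_1/\theta_1-2)/2}u^+$ one has $g_1(u)=\theta_1 e^{2-\lambda_1/\theta_1}\int_\Omega w^2\log w^2$, and the elementary pointwise inequality $s\log s\ge -e^{-1}$ (applied with $s=w^2$) gives $\int_\Omega w^2\log w^2\ge -e^{-1}|\Omega|$. Because $\theta_1<0$, the prefactor $\theta_1 e^{2-\lambda_1/\theta_1}$ is negative, so this lower bound turns into the upper bound $g_1(u)\le -\theta_1 e^{1-\lambda_1/\theta_1}|\Omega|=:C_1$, and likewise $g_2(v)\le C_2$. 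Substituting back yields $\|(u_n,v_n)\|_{\mathcal{H}}^2\le (4c+C_1+C_2)+o(1)+o(1)\|(u_n,v_n)\|_{\mathcal{H}}$, and solving this quadratic inequality in $\|(u_n,v_n)\|_{\mathcal{H}}$ gives the claimed uniform bound.

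The main obstacle — and the only place where $\theta_i<0$ is genuinely used — is precisely the control of the logarithmic contribution: the term $\int_\Omega(u^+)^2\log(u^+)^2$ has indefinite sign and is not of lower order with respect to $|\nabla u|_2^2$, so a naive splitting (estimating $|u^+|_2^2$ by Poincar\'e and discarding the logarithm) would yield boundedness only under an extra smallness hypothesis on $|\theta_i|$. The decisive point is to absorb the mass term into the logarithm \emph{before} applying $s\log s\ge -e^{-1}$: for $\theta_i<0$ the combined expression $g_i$ is then uniformly bounded above no matter how large $u_n^+$ grows, which is exactly what preserves the coercivity of $\|(u_n,v_n)\|_{\mathcal{H}}^2$.
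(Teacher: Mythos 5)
Your proof is correct and follows essentially the same route as the paper: both form the combination $\mathcal{L}(u_n,v_n)-\tfrac14\mathcal{L}'(u_n,v_n)(u_n,v_n)$ to cancel the quartic and coupling terms, absorb the $\lambda_i$ mass terms into the logarithm to get $\theta_i\int_\Omega (u_n^+)^2\log\bigl(e^{\lambda_i/\theta_i-2}(u_n^+)^2\bigr)$, and bound this by the constant $-\theta_i e^{1-\lambda_i/\theta_i}|\Omega|$ via $s\log s\ge -e^{-1}$ and $\theta_i<0$, before closing with the quadratic-versus-linear inequality in the norm. No gaps.
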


    \begin{proof}
    	Since $\mathcal{L}'(u_n,v_n) \to 0$, we have $\mathcal{L}'(u_n,v_n)(u_n,v_n) = o_n(|\nabla u_n|_2+|\nabla v_n|_2)$. Then for large $n$, we have
    	\begin{align}
    		& c + |\nabla u_n|_2+|\nabla v_n|_2 + 1 \\
    		&\geq \mathcal{L}(u_n,v_n) - \frac14 \mathcal{L}'(u_n,v_n)(u_n,v_n) \\
    		&= \frac{1}{4}|\nabla u_n|_2^2 -\frac{\theta_1}{4}\int_{\Omega}(u_n^+)^2 \log \sbr{e^{\frac{\lambda_{1}}{\theta_1}-2} (u_n^+)^2} +\frac{1}{4}|\nabla v_n|_2^2 -\frac{\theta_2}{4}\int_{\Omega}(v_n^+)^2 \log \sbr{e^{\frac{\lambda_{2}}{\theta_2}-2} (v_n^+)^2}.
    	\end{align}
        Since $\theta_1 < 0$, using $t\log t \geq -e^{-1}$, one gets
        \begin{align}
        	\frac{\theta_1}{4}\int_{\Omega}(u_n^+)^2 \log \sbr{e^{\frac{\lambda_{1}}{\theta_1}-2} (u_n^+)^2} &\leq \frac{\theta_1}{4}\int_{e^{\frac{\lambda_{1}}{\theta_1}-2} (u_n^+)^2\leq1}(u_n^+)^2 \log \sbr{e^{\frac{\lambda_{1}}{\theta_1}-2} (u_n^+)^2} \\
        	&\leq -\frac{\theta_1}{4}e^{2-\frac{\lambda_{1}}{\theta_1}}\int_{e^{\frac{\lambda_{1}}{\theta_1}-2} (u_n^+)^2\leq1} e^{-1} \\
        	&\leq -\frac{\theta_1}{4}e^{1-\frac{\lambda_{1}}{\theta_1}}|\Omega|.
        \end{align}
         Similarly, we have $$\frac{\theta_2}{4}\int_{\Omega}(v_n^+)^2 \log \sbr{e^{\frac{\lambda_{2}}{\theta_2}-2} (v_n^+)^2} \leq -\frac{\theta_2}{4}e^{1-\frac{\lambda_{2}}{\theta_2}}|\Omega|$$ since $\theta_2 < 0$. For $n$ large enough, we have
         \begin{align}
         	c + |\nabla u_n|_2+|\nabla v_n|_2 + 1 \geq \frac{1}{4}|\nabla u_n|_2^2 + \frac{1}{4}|\nabla v_n|_2^2 +\frac{\theta_1}{4}e^{1-\frac{\lambda_{1}}{\theta_1}}|\Omega| +\frac{\theta_2}{4}e^{1-\frac{\lambda_{2}}{\theta_2}}|\Omega|,
         \end{align}
         yielding to the boundedness of $\{(u_n,v_n)\}$ in $\mathcal{H}$.
    \end{proof}

    \begin{proof}[\bf Proof Theorem \ref{Theorem-2}]
    	By Lemma \ref{lm}, we can take a minimizing sequence $\lbr{(u_n,v_n)} \subset B_{\rho-\tau}$ for $\mathcal{C}_\rho$ with $\tau > 0$ small enough. By Ekeland's variational principle, we can assume that $\mathcal{L}'(u_n,v_n) \to 0$. By Lemma \ref{boundedness}, we can see that $\lbr{(u_n,v_n)}$ is bounded in $\mathcal{H} $. Hence, we may assume that
    	\begin{equation}
    		\sbr{u_n,v_n} \rightharpoonup (u,v) \text{ weakly in } \mathcal{H}.
    	\end{equation}
    	Passing to a subsequence, we may also assume that
    	\begin{equation}
    		\begin{aligned}
    			&u_n \rightharpoonup u, \quad v_n\rightharpoonup v \ \text{ weakly in } L^4(\Omega),\\
    			&u_n \to u, \quad v_n\to v \ \text{ strongly in } L^p(\Omega) \text{ for } 2\leq p<4,\\
    			&u_n \to u, \quad v_n \to v \ \text{ almost everywhere in } \Omega.
    		\end{aligned}
    	\end{equation}
    	By the weak lower semi-continuity of the norm, we see that $(u,v) \in B_\rho$. Similar to the proof of Theorem \ref{Theorem-1} (1) and (2), we have $\mathcal{L}^\prime (u,v)=0$ and $u \geq 0, v \geq 0$. Let $w_n=u_n-u$ and $z_n=v_n-v$. Then similar to the proof of Theorem \ref{Theorem-1} (1) and (2) one gets
    	\begin{equation}
    		\begin{aligned}
    			|\nabla w_n|_2^2=\mu_1|w_n^+|_4^4+o_n(1), \quad 	|\nabla z_n|_2^2=\mu_2|z_n^+|_4^4+o_n(1).
    		\end{aligned}
    	\end{equation}
    	\begin{equation} \label{eq energy}
    		\mathcal{L}(u_n,v_n)=\mathcal{L}(u,v)+\frac{1}{4}\int_{\Omega}|\nabla w_n|^2+\frac{1}{4}\int_{\Omega}|\nabla z_n|^2+o_n(1).
    	\end{equation}
    	Passing to a subsequence, we may assume that
    	\begin{equation}
    		\int_{\Omega}|\nabla w_n|^2=k_1+o_n(1), \quad \int_{\Omega}|\nabla z_n|^2=k_2+o_n(1).
    	\end{equation}	
    	Letting $n\to +\infty$ in \eqref{eq energy}, we have
    	\begin{equation}
    		\mathcal{C}_\rho \leq \mathcal{L}(u,v) \leq \mathcal{L}(u,v)+\frac{1}{4}k_1+\frac{1}{4}k_2=\lim_{n\to \infty} 	\mathcal{L}(u_n,v_n) = \mathcal{C}_\rho,
    	\end{equation}
        showing that $k_1 = k_2 = 0$. Hence, up to a subsequence we obtain
        \begin{equation}
        	\sbr{u_n,v_n} \to (u,v) \text{ strongly in } \mathcal{H}.
        \end{equation}
    	Since $\mathcal{L}(u,v) = \mathcal{C}_\rho < 0$ we have $(u,v) \neq (0,0)$.
    	
    	It remains to show that $(u,v)$ is not semi-trivial. We argue by contradiction. WLOG, assume that $u \equiv 0$, $v \neq 0$. We consider $\mathcal{L}(tw,v)$ where $t > 0$ is small enough such that $(tw,v) \in B_\rho$:
    	\begin{align}
    		\mathcal{L}(tw,v) = \mathcal{L}(0,v) &+ t^2(\frac{1}{2}\int_{\Omega} |\nabla w|^2-\frac{\lambda_1}{2}\int_{\Omega}|w^+|^2-\frac{\mu_1}{4}t^2 \int_{\Omega}|w^+|^4\\
    	    &-\frac{\theta_1}{2}\int_{\Omega}(w^+)^2(\log (u^+)^2-1)-\theta_1\log t\int_{\Omega}(w^+)^2 - \frac\beta 2\int_{\Omega}(w^+)^2(v^+)^2).
    	\end{align}
    	Choose $w$ such that $\int_{\Omega}(w^+)^2 > 0$. Then for small $t$ we have $\mathcal{L}(tw,v) < \mathcal{L}(0,v) = \mathcal{C}_\rho$, in a contradiction with the definition of $\mathcal{C}_\rho$. Hence $u \neq 0, v \neq 0$. By a similar argument as used in the proof of (1)-(2) in Theorem \ref{Theorem-1}, we can show that $u,v>0$ and $u,v \in C^2(\Omega)$.  This completes the proof.
    \end{proof}
	
	\section{Proof of Theorem \ref{least ene solu}} \label{sect l-e-s}
	
	\begin{lemma} \label{energy level of l-e-s}
		Under the hypotheses of Lemma \ref{lm}. We further assume that $2\min\{\theta_1,\theta_2\} \geq -\lambda_{1}(\Omega)$ or $\beta > 0$ or $\beta \in (-\sqrt{\mu_1\mu_2},0)$. Then $-\infty < \mathcal{C}_K < 0$.
	\end{lemma}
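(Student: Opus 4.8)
The plan is to establish the two inequalities $\mathcal{C}_K<0$ and $\mathcal{C}_K>-\infty$ separately; the first is essentially free from the previous section, while the second is the substantive point.

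For the upper bound I would invoke Theorem \ref{Theorem-2}: under the present hypotheses (which contain those of Lemma \ref{lm}) it produces a positive solution $(\tilde u,\tilde v)$ of \eqref{System1} with $\mathcal{L}(\tilde u,\tilde v)=\mathcal{C}_\rho<0$. A solution is by definition a critical point of $\mathcal{L}$, so $(\tilde u,\tilde v)\in K$; in particular $K\neq\emptyset$ and $\mathcal{C}_K\leq\mathcal{L}(\tilde u,\tilde v)=\mathcal{C}_\rho<0$.

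For the lower bound, fix any $(u,v)\in K$. First I would note, as in the proof of (1)--(2) of Theorem \ref{Theorem-1}, that testing $\mathcal{L}'(u,v)=0$ against $(u^-,0)$ and $(0,v^-)$ forces $\int_{\Omega}|\nabla u^-|^2=\int_{\Omega}|\nabla v^-|^2=0$, hence $u,v\geq0$ and $u^+=u$, $v^+=v$. Since $\mathcal{L}'(u,v)(u,v)=0$, I can reuse the algebraic identity behind \eqref{f23} and Lemma \ref{boundedness}: writing $\mathcal{L}(u,v)=\mathcal{L}(u,v)-\tfrac14\mathcal{L}'(u,v)(u,v)$ cancels both the quartic terms $\mu_i\int_{\Omega}u^4$ and the coupling term $\beta\int_{\Omega}u^2v^2$ (the term that is dangerous when $\beta<0$), leaving
\begin{equation}
    \mathcal{L}(u,v)=\tfrac14|\nabla u|_2^2-\tfrac{\theta_1}{4}\int_{\Omega}u^2\log\sbr{e^{\lambda_1/\theta_1-2}u^2}+\tfrac14|\nabla v|_2^2-\tfrac{\theta_2}{4}\int_{\Omega}v^2\log\sbr{e^{\lambda_2/\theta_2-2}v^2}.
\end{equation}
Because $\theta_i<0$ in every case of Lemma \ref{lm}, I would bound the logarithmic integrals exactly as in Lemma \ref{boundedness}: discarding the region $\{e^{\lambda_i/\theta_i-2}u^2>1\}$, where the integrand has favorable sign, and applying $t\log t\geq -e^{-1}$ on the complement, so that $-\tfrac{\theta_i}{4}\int_{\Omega}u^2\log(e^{\lambda_i/\theta_i-2}u^2)\geq \tfrac{\theta_i}{4}e^{1-\lambda_i/\theta_i}|\Omega|$. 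Dropping the nonnegative gradient terms then gives
\[
    \mathcal{L}(u,v)\geq \tfrac{\theta_1}{4}e^{1-\lambda_1/\theta_1}|\Omega|+\tfrac{\theta_2}{4}e^{1-\lambda_2/\theta_2}|\Omega|,
\]
a finite constant independent of $(u,v)$. Taking the infimum over $K$ yields $\mathcal{C}_K>-\infty$.

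The only genuine obstacle is this lower bound, and it is exactly the sign-indefiniteness of $u\log u^2$: a direct estimate of $\int_{\Omega}u^2\log u^2$ has no lower bound as $|u|_\infty\to\infty$. The device that resolves it is the Nehari-type subtraction $\mathcal{L}-\tfrac14\mathcal{L}'(\cdot)(\cdot)$, which removes the quartic and coupling terms (so no restriction on $\beta$ enters here) and reduces matters to the elementary inequality $t\log t\geq -e^{-1}$. I expect that the supplementary hypothesis $2\min\{\theta_1,\theta_2\}\geq-\lambda_1(\Omega)$ or $\beta>0$ or $\beta\in(-\sqrt{\mu_1\mu_2},0)$ does not actually enter the proof of this particular lemma, which uses only $\theta_i<0$; it is recorded here because it is needed later in the proof of Theorem \ref{least ene solu}, where one shows that $\mathcal{C}_K$ is attained by a nontrivial nonnegative solution.
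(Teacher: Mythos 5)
Your proof is correct, and for the lower bound it takes a genuinely different (and more economical) route than the paper. The paper splits $\mathcal{C}_K>-\infty$ into three cases matching the supplementary hypothesis: when $2\min\{\theta_1,\theta_2\}\geq-\lambda_1(\Omega)$ it uses the $\tfrac14$-subtraction but keeps the residual terms $\tfrac{\theta_i}{2}|u^+|_2^2$ separate, controlling them by Poincar\'e via $\tfrac14|\nabla u|_2^2+\tfrac{\theta_1}{2}|u^+|_2^2\geq(\tfrac14\lambda_1(\Omega)+\tfrac{\theta_1}{2})|u^+|_2^2\geq0$ (this is where that hypothesis is consumed); when $\beta>0$ or $\beta\in(-\sqrt{\mu_1\mu_2},0)$ it instead uses the $\tfrac12$-subtraction, which leaves the quartic and coupling terms, and the sign restrictions on $\beta$ are exactly what keeps the quadratic form $\mu_1|u^+|_4^4+\mu_2|v^+|_4^4+2\beta|u^+v^+|_2^2$ coercive against the $L^2$ terms. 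You instead run the single identity
\begin{equation}
	\mathcal{L}(u,v)-\tfrac14\mathcal{L}'(u,v)(u,v)=\tfrac14|\nabla u|_2^2-\tfrac{\theta_1}{4}\int_{\Omega}(u^+)^2\log\bigl(e^{\lambda_1/\theta_1-2}(u^+)^2\bigr)+\tfrac14|\nabla v|_2^2-\tfrac{\theta_2}{4}\int_{\Omega}(v^+)^2\log\bigl(e^{\lambda_2/\theta_2-2}(v^+)^2\bigr),
\end{equation}
absorbing the $\tfrac{\theta_i}{2}|u^+|_2^2$ terms into the logarithm exactly as in Lemma \ref{boundedness}, and then the pointwise inequality $s\log(cs)\geq-e^{-1}c^{-1}$ together with $|\Omega|<\infty$ gives the uniform bound $\mathcal{L}(u,v)\geq\tfrac{\theta_1}{4}e^{1-\lambda_1/\theta_1}|\Omega|+\tfrac{\theta_2}{4}e^{1-\lambda_2/\theta_2}|\Omega|$ with no case distinction and no use of the supplementary hypothesis. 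I have checked the algebra and it is sound; since the quartic and coupling terms cancel entirely under the $\tfrac14$-subtraction, your version actually proves the lemma under the hypotheses of Lemma \ref{lm} alone, i.e.\ the extra assumption in the statement is not needed for this conclusion (and, since Theorem \ref{least ene solu} uses that assumption only through this lemma, your observation would let it be dropped there as well). The upper bound $\mathcal{C}_K\leq\mathcal{C}_\rho<0$ via the critical point from Theorem \ref{Theorem-2} is identical to the paper's.
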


    \begin{proof}
    	Notice that $(u,v) \in K$ where $(u,v)$ is the solution given by Theorem \ref{Theorem-2}. Hence $\mathcal{C}_K \leq \mathcal{C}_\rho < 0$. Now we show that $\mathcal{C}_K > -\infty$.
    	
    	{\bf Case 1.} $2\min\{\theta_1,\theta_2\} \geq -\lambda_{1}(\Omega)$.
    	
    	For any $(u,v) \in K$ one gets
    	\begin{align}
    		\mathcal{L}(u,v)&=\mathcal{L}(u,v)-\frac{1}{4}\mathcal{L}^\prime(u,v)(u,v)\\
    		&= \frac{1}{4}|\nabla u|_2^2 -\frac{\theta_1}{4}\int_{\Omega}(u^+)^2 \log \sbr{e^{\frac{\lambda_{1}}{\theta_1}} (u^+)^2}+\frac{\theta_1}{2}|u^+|_2^2 \\
    		& +\frac{1}{4}|\nabla v|_2^2 -\frac{\theta_2}{4}\int_{\Omega}(v^+)^2 \log \sbr{e^{\frac{\lambda_{2}}{\theta_2}} (v^+)^2}+\frac{\theta_2}{2}|v^+|_2^2 .
    	\end{align}
        Since $\theta_1 < 0$, using $t\log t \geq -e^{-1}$, one gets
        \begin{align}
        	\frac{\theta_1}{4}\int_{\Omega}(u_n^+)^2 \log \sbr{e^{\frac{\lambda_{1}}{\theta_1}} (u_n^+)^2} &\leq \frac{\theta_1}{4}\int_{e^{\frac{\lambda_{1}}{\theta_1}} (u_n^+)^2\leq1}(u_n^+)^2 \log \sbr{e^{\frac{\lambda_{1}}{\theta_1}} (u_n^+)^2} \\
        	&\leq -\frac{\theta_1}{4}e^{-\frac{\lambda_{1}}{\theta_1}}\int_{e^{\frac{\lambda_{1}}{\theta_1}} (u_n^+)^2\leq1} e^{-1} \\
        	&\leq -\frac{\theta_1}{4}e^{-\frac{\lambda_{1}}{\theta_1}-1}|\Omega|.
        \end{align}
        Similarly, we have $$\frac{\theta_2}{4}\int_{\Omega}(v_n^+)^2 \log \sbr{e^{\frac{\lambda_{2}}{\theta_2}-2} (v_n^+)^2} \leq -\frac{\theta_2}{4}e^{-\frac{\lambda_{2}}{\theta_2}-1}|\Omega|$$ since $\theta_2 < 0$. Moreover, we have
        $$
        \frac{1}{4}|\nabla u|_2^2 + \frac{\theta_1}{2}|u^+|_2^2 \geq (\frac{1}{4}\lambda_{1}(\Omega) + \frac{\theta_1}{2})|u^+|_2^2 \geq 0
        $$
        and similarly $\frac{1}{4}|\nabla v|_2^2 + \frac{\theta_2}{2}|v^+|_2^2 \geq 0$. Hence,
        $$
        \mathcal{L}(u,v) \geq \frac{\theta_1}{4}e^{-\frac{\lambda_{1}}{\theta_1}-1}|\Omega| + \frac{\theta_2}{4}e^{-\frac{\lambda_{2}}{\theta_2}-1}|\Omega| > -\infty,
        $$
        showing that $\mathcal{C}_K > -\infty$.

        {\bf Case 2.} $\beta > 0$.

        For any $(u,v) \in K$ we obtain
        \begin{align}
        	&\mathcal{L}(u,v)-\frac{1}{2}\mathcal{L}^\prime(u,v)(u,v)\\
        	&=\frac{1}{4}\sbr{\mu_1|u^+|_4^4+\mu_2|v^+|_4^4+2\beta |u^+v^+|_2^2}+\frac{\theta_1}{2}|u^+|_2^2+\frac{\theta_2}{2}|v^+|_2^2 \\
        	&\geq \frac{1}{4}\sbr{\mu_1|u^+|_4^4+\mu_2|v^+|_4^4} + \frac{\theta_1}{2}|\Omega|^{\frac12}|u^+|_4^2 + \frac{\theta_2}{2}|\Omega|^{\frac12}|v^+|_4^2,
        \end{align}
        yielding that $\mathcal{C}_K > -\infty$.

        {\bf Case 3.} $\beta \in (-\sqrt{\mu_1\mu_2},0)$.

        In this case,
        $$
        2\beta |u^+v^+|_2^2 \geq \beta\sqrt{\frac{\mu_1}{\mu_2}}|u^+|_4^4 + \beta\sqrt{\frac{\mu_2}{\mu_1}}|v^+|_4^4.
        $$
        Then for any $(u,v) \in K$ we have
        \begin{align}
        	&\mathcal{L}(u,v)-\frac{1}{2}\mathcal{L}^\prime(u,v)(u,v)\\
        	&=\frac{1}{4}\sbr{(\mu_1+\beta\sqrt{\frac{\mu_1}{\mu_2}})|u^+|_4^4+(\mu_2+\beta\sqrt{\frac{\mu_2}{\mu_1}})|v^+|_4^4}+ \frac{\theta_1}{2}|\Omega|^{\frac12}|u^+|_4^2 + \frac{\theta_2}{2}|\Omega|^{\frac12}|v^+|_4^2.
        \end{align}
        From $\mu_1+\beta\sqrt{\frac{\mu_1}{\mu_2}} > 0$ and $\mu_2+\beta\sqrt{\frac{\mu_2}{\mu_1}} > 0$ we deduce that $\mathcal{C}_K > -\infty$.
    \end{proof}

    \begin{proof}[\bf Proof Theorem \ref{least ene solu}]
    	Take a minimizing sequence $\lbr{(u_n,v_n)} \subset K$ for $\mathcal{C}_K$. Then $\mathcal{L}'(u_n,v_n) = 0$. By Lemma \ref{boundedness}, we can see that $\lbr{(u_n,v_n)}$ is bounded in $\mathcal{H} $. Hence, we may assume that
    	\begin{equation}
    		\sbr{u_n,v_n} \rightharpoonup (u,v) \text{ weakly in } \mathcal{H}.
    	\end{equation}
    	Passing to subsequence, we may also assume that
    	\begin{equation}
    		\begin{aligned}
    			&u_n \rightharpoonup u, \quad v_n\rightharpoonup v \ \text{ weakly in } L^4(\Omega),\\
    			&u_n \to u, \quad v_n\to v \ \text{ strongly in } L^p(\Omega) \text{ for } 2\leq p<4,\\
    			&u_n \to u, \quad v_n \to v \ \text{ almost everywhere in } \Omega.
    		\end{aligned}
    	\end{equation}
    	Similar to the proof of Theorem \ref{Theorem-1} (1) and (2), we have $\mathcal{L}^\prime (u,v)=0$ and $u \geq 0, v \geq 0$. Let $w_n=u_n-u$ and $z_n=v_n-v$. Then similar to the proof of Theorem \ref{Theorem-1} (1) and (2) one gets
    	\begin{equation}
    		\begin{aligned}
    			|\nabla w_n|_2^2=\mu_1|w_n^+|_4^4+o_n(1), \quad 	|\nabla z_n|_2^2=\mu_2|z_n^+|_4^4+o_n(1).
    		\end{aligned}
    	\end{equation}
    	\begin{equation} \label{eq energy2}
    		\mathcal{L}(u_n,v_n)=\mathcal{L}(u,v)+\frac{1}{4}\int_{\Omega}|\nabla w_n|^2+\frac{1}{4}\int_{\Omega}|\nabla z_n|^2+o_n(1).
    	\end{equation}
    	Passing to subsequence, we may assume that
    	\begin{equation}
    		\int_{\Omega}|\nabla w_n|^2=k_1+o_n(1), \quad \int_{\Omega}|\nabla z_n|^2=k_2+o_n(1).
    	\end{equation}	
    	Letting $n\to +\infty$ in \eqref{eq energy2}, we have
    	\begin{equation}
    		\mathcal{C}_K \leq \mathcal{L}(u,v) \leq \mathcal{L}(u,v)+\frac{1}{4}k_1+\frac{1}{4}k_2=\lim_{n\to \infty} 	\mathcal{L}(u_n,v_n) = \mathcal{C}_K,
    	\end{equation}
    	showing that $k_1 = k_2 = 0$. Hence, up to a subsequence we obtain
    	\begin{equation}
    		\sbr{u_n,v_n} \to (u,v) \text{ strongly in } \mathcal{H}.
    	\end{equation}
    	Since $\mathcal{L}(u,v) = \mathcal{C}_K < 0$ we have $(u,v) \neq (0,0)$.
    \end{proof}

	\section{Proof of Theorem \ref{Theorem-5}} \label{Sect6}
	
First, we give two energy estimates, which play important roles in the proof of Theorem \ref{Theorem-5}.
	\begin{proposition}\label{Lemma energy estimate-4}
		Assume that  $\beta \neq 0$, then we have
		\begin{equation}
			\mathcal{C}_M<\min\lbr{\frac{1}{4}\mu_1^{-1}\mathcal{S}^2, \frac{1}{4}\mu_2^{-1}\mathcal{S}^2}<\frac{1}{4}(\mu_1^{-1}+\mu_2^{-1})\mathcal{S}^2,
		\end{equation}
	where $\mathcal{C}_M$ is defined in \eqref{defi of B neqgative}.
	\end{proposition}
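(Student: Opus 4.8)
The plan is to bound $\mathcal{C}_M$ from above by testing the mountain–pass functional along a semi‑trivial ray built from a concentrating bubble, thereby reducing the estimate to the single–equation computation already carried out for $f_2$ in Proposition \ref{Lemma energy estimate-1}. Since $\mathcal{C}_M$ is a single number, it suffices to bound it by the smaller of the two thresholds: assume without loss of generality that $\mu_1=\max\{\mu_1,\mu_2\}$, so that $\min\{\tfrac14\mu_1^{-1}\mathcal{S}^2,\tfrac14\mu_2^{-1}\mathcal{S}^2\}=\tfrac14\mu_1^{-1}\mathcal{S}^2$, and aim to prove $\mathcal{C}_M<\tfrac14\mu_1^{-1}\mathcal{S}^2$; the remaining inequality $\tfrac14\mu_1^{-1}\mathcal{S}^2<\tfrac14(\mu_1^{-1}+\mu_2^{-1})\mathcal{S}^2$ is trivial because $\mu_2^{-1}>0$. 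First I would take the truncated Aubin--Talenti bubble $v_\varepsilon=\xi U_{\varepsilon,y_0}$ from \eqref{limit system_1} and consider the path $\gamma(t)=(tTv_\varepsilon,0)$, $t\in[0,1]$. Because $\theta_1<0$ but the quartic term dominates the logarithm, $\mathcal{L}(sv_\varepsilon,0)\to-\infty$ as $s\to+\infty$, so I may fix $T$ so large that $\mathcal{L}(Tv_\varepsilon,0)<\mathcal{L}(\tilde u,\tilde v)<0$; then $\gamma\in\Gamma$ and $\max_{t\in[0,1]}\mathcal{L}(\gamma(t))=\max_{s\ge0}\mathcal{L}(sv_\varepsilon,0)$, whence $\mathcal{C}_M\le\max_{s\ge0}\mathcal{L}(sv_\varepsilon,0)$.

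Next I would analyze $g(s):=\mathcal{L}(sv_\varepsilon,0)$ exactly as in Proposition \ref{Lemma energy estimate-1}. Expanding the logarithm gives $g(s)=\tfrac{s^2}{2}\int_\Omega|\nabla v_\varepsilon|^2-\tfrac{\mu_1 s^4}{4}\int_\Omega v_\varepsilon^4-\tfrac{s^2}{2}(\lambda_1-\theta_1)\int_\Omega v_\varepsilon^2-\tfrac{\theta_1 s^2\log s^2}{2}\int_\Omega v_\varepsilon^2-\tfrac{\theta_1 s^2}{2}\int_\Omega v_\varepsilon^2\log v_\varepsilon^2$. By \eqref{es-1} the first two terms contribute $\tfrac14\mu_1^{-1}\mathcal{S}^2+O(\varepsilon^2)$, with the maximizer at $s^2=\mu_1^{-1}(1+o(1))$; one checks, as for $s_{2,\varepsilon}$ before, that this maximizer is bounded above and below for $\varepsilon$ small. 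Substituting \eqref{es-2} and the upper bound \eqref{es-4} (note that $-\theta_1/2>0$, so \eqref{es-4} is the correct direction here) collapses the remaining terms into a single $\varepsilon^2|\log\varepsilon|$ contribution, and I expect to arrive at
\begin{equation}
	\max_{s\ge0}\mathcal{L}(sv_\varepsilon,0)\le\frac14\mu_1^{-1}\mathcal{S}^2-4s^2\omega_4\Big(\lambda_1+\theta_1\log\frac{32}{\mu_1 R^2}\Big)\varepsilon^2|\log\varepsilon|+O(\varepsilon^2),
\end{equation}
where $R$ is the localization radius of the cut‑off $\xi$.

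The decisive step, and the main obstacle, is fixing the sign of this $\varepsilon^2|\log\varepsilon|$ correction. In the companion estimate for $\theta_i>0$ the analogous bracket was made negative by taking $R$ \emph{small}, which is always admissible; here $\theta_1<0$, and the bracket $\lambda_1+\theta_1\log\tfrac{32}{\mu_1R^2}$ is positive precisely when $\tfrac{32e^{\lambda_1/\theta_1}}{\mu_1R^2}<1$, i.e. only when $R$ is taken \emph{large}. This is exactly where the geometric hypothesis enters: since the construction requires $B_{2R}(y_0)\subset\Omega$, such an $R$ exists only because $R_{\max}$ is large enough, which under the standing hypotheses of this section is guaranteed by $\tfrac{32e^{\lambda_i/\theta_i}}{\mu_iR_{\max}^2}<1$. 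With $R$ so chosen the bracket is a fixed positive constant, the correction is $\le -c\,\varepsilon^2|\log\varepsilon|$ for some $c>0$, and for $\varepsilon$ small it dominates the $O(\varepsilon^2)$ remainder; hence $\mathcal{C}_M\le\max_{s\ge0}\mathcal{L}(sv_\varepsilon,0)<\tfrac14\mu_1^{-1}\mathcal{S}^2$. The symmetric bound, obtained by placing the bubble in the second component, together with the trivial strict inequality against $\tfrac14(\mu_1^{-1}+\mu_2^{-1})\mathcal{S}^2$, then yields the full chain of inequalities.
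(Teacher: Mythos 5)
Your proposal is correct and follows essentially the same route as the paper: the paper also bounds $\mathcal{C}_M$ by $\max_{t>0}\mathcal{L}(tv_\varepsilon,0)$ along the semi-trivial bubble ray with $R=R_{\max}$ and then invokes the single-equation estimate of Deng et al.\ (their Lemma 3.5), which is exactly the computation you carry out explicitly. You correctly identify both the role of the endpoint condition $\mathcal{L}(\gamma(1))<\mathcal{L}(\tilde u,\tilde v)$ in the definition of $\Gamma$ and the point where the standing hypothesis $\tfrac{32e^{\lambda_i/\theta_i}}{\mu_i R_{\max}^2}<1$ of Theorem \ref{Theorem-5} is needed to fix the sign of the $\varepsilon^2|\log\varepsilon|$ correction when $\theta_1<0$.
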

	\begin{proof} 
		Without loss of generality, we only prove the inequality 	$\mathcal{C}_M<\frac{1}{4}\mu_1^{-1}\mathcal{S}^2$.  By the definition of $\mathcal{C}_M$,  we can easily check   that
		$$	\mathcal{C}_M\leq \inf_{(u,v)\in \mathcal{H}\setminus \lbr{(0,0)}} \max_{t>0} \ \mathcal{L}(tu,tv)\leq \max_{t >0} \mathcal{L}(tv_{\varepsilon},0),$$
		where $v_\varepsilon$ is defined in \eqref{f64} with $R=R_{\max}$.  Then the proof  follows directly from that in \cite[Lemma 3.5]{Deng-He-Pan=Arxiv=2022}  and  we omit the details.
	\end{proof}
	
	\begin{proposition}\label{Lemma energy estimate-5}
		Assume that   $\beta\in \sbr{-\infty,0}\cup \sbr{ 0,\min\lbr{\mu_1,\mu_2}}  \cup  \sbr{\max\lbr{\mu_1,\mu_2},+\infty} $ and
		$$	\frac{32 e^{{\lambda_{i}}/{\theta_{i}}}}{\mu_iR_{\max}^2}<1, \text{ with } \ \ R_{\max}:=\sup\lbr{r>0:\exists x\in \Omega, \text{ s.t. } B_r(x)\subset \Omega}, ~i=1,2,$$
		then we have
		\begin{equation}
			\mathcal{C}_M< \mathcal{A},
		\end{equation}
		where  $\mathcal{A}$ is given by \eqref{defi of A}.
	\end{proposition}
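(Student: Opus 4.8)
The plan is to split the argument according to the two branches of the formula \eqref{f13} for $\mathcal{A}$. When $\beta<0$, \eqref{f13} gives $\mathcal{A}=\frac14(\mu_1^{-1}+\mu_2^{-1})\mathcal{S}^2$, and the desired bound $\mathcal{C}_M<\mathcal{A}$ is \emph{exactly} the rightmost inequality already established in Proposition \ref{Lemma energy estimate-4}; nothing further is required, and the hypothesis on $R_{\max}$ is not even used in this regime. It therefore remains to treat the two cases $0<\beta<\min\{\mu_1,\mu_2\}$ and $\beta>\max\{\mu_1,\mu_2\}$, for which $\mathcal{A}=\frac14(k+l)\mathcal{S}^2$ with $k,l>0$ solving \eqref{f14}.

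For these cases I would reuse the test pair $w_\varepsilon=\sqrt{k}\,v_\varepsilon$, $z_\varepsilon=\sqrt{l}\,v_\varepsilon$ from \eqref{f19}, but now with the cut-off radius taken as large as the geometry permits, namely $R=R_{\max}$ in \eqref{f64}. Along the ray $t\mapsto(tw_\varepsilon,tz_\varepsilon)$ the quartic term forces $\mathcal{L}\to-\infty$, so for $T$ large one has $\mathcal{L}(Tw_\varepsilon,Tz_\varepsilon)<\mathcal{L}(\tilde u,\tilde v)$, and the truncated, reparametrized ray is an admissible path in the class $\Gamma$ of \eqref{defi of B neqgative}. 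Hence
\begin{equation}
	\mathcal{C}_M\leq\max_{t>0}\mathcal{L}(tw_\varepsilon,tz_\varepsilon)\leq\max_{t_1,t_2>0}\mathcal{L}(t_1w_\varepsilon,t_2z_\varepsilon),
\end{equation}
and I would evaluate the right-hand side exactly as in \eqref{f18}--\eqref{f17}. The existence of an interior maximizer $(t_{1,\varepsilon},t_{2,\varepsilon})$ and the convergence $t_{i,\varepsilon}\to1$ transcribe verbatim, since those steps use only $\lim_{s\to+\infty}s^4/(s^2\log s^2)=+\infty$ and the membership equations, neither of which sees the sign of $\theta_i$; likewise the leading block still gives $I_1\leq\mathcal{A}+O(\varepsilon^2)$ via \eqref{f14} and \eqref{es-7}.

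The only genuinely new point---and the crux of the proof---is the estimate of the logarithmic blocks $I_2,I_3$. In Proposition \ref{Lemma energy estimate-2} the prefactor $-\theta_i/2$ was negative and the \emph{lower} bound \eqref{es-3} was invoked; now $\theta_i<0$, so $-\theta_i/2>0$ and one must instead use the \emph{upper} bound \eqref{es-4} for $\int_\Omega v_\varepsilon^2\log v_\varepsilon^2$. Repeating the same algebra, the $\varepsilon^2|\log\varepsilon|$–coefficient of $I_2$ becomes $-4t_{1,\varepsilon}^2\theta_1 k\,\omega_4\,\log\!\sbr{\frac{8k\,e^{\lambda_1/\theta_1}(\varepsilon^2+4R^2)}{(\varepsilon^2+R^2)^2}}$, whose bracketed argument tends, as $\varepsilon\to0$ with $R=R_{\max}$, to $\frac{32\,k\,e^{\lambda_1/\theta_1}}{R_{\max}^2}$. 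Solving \eqref{f14} yields $\mu_1 k=1-\beta l<1$ (because $\beta l>0$ in both admissible ranges), hence $k<\mu_1^{-1}$; combined with the hypothesis $32\,e^{\lambda_1/\theta_1}/(\mu_1R_{\max}^2)<1$ this makes the limiting argument strictly less than $1$, so the logarithm is negative for $\varepsilon$ small, and since $-\theta_1 k>0$ the whole coefficient is strictly negative. Thus $I_2\leq-C\omega_4\varepsilon^2|\log\varepsilon|+o(\varepsilon^2|\log\varepsilon|)$ with $C>0$, and symmetrically $I_3\leq-C\omega_4\varepsilon^2|\log\varepsilon|+o(\varepsilon^2|\log\varepsilon|)$ using $l<\mu_2^{-1}$. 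Adding the three blocks gives
\begin{equation}
	\mathcal{C}_M\leq\mathcal{A}-C\omega_4\varepsilon^2|\log\varepsilon|+o(\varepsilon^2|\log\varepsilon|)<\mathcal{A}
\end{equation}
for all sufficiently small $\varepsilon>0$.

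The main obstacle is precisely this sign reversal. In the $\theta_i>0$ setting of Proposition \ref{Lemma energy estimate-2} the logarithmic correction was automatically favourable and one could freely shrink $R$; here it helps only when $R$ is taken as large as $\Omega$ allows (whence $R=R_{\max}$) and only under the quantitative threshold $32\,e^{\lambda_i/\theta_i}/(\mu_iR_{\max}^2)<1$. The one small algebraic step that makes this threshold bite is the verification that $k<\mu_1^{-1}$ and $l<\mu_2^{-1}$ throughout the admissible $\beta$-ranges, which upgrades the stated hypothesis (phrased with $\mu_i$) to the sharper inequality actually needed (phrased with $k,l$); everything else is a faithful transcription of the computation culminating in \eqref{f17}.
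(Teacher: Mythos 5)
Much of your proposal matches the paper's proof: the $\beta<0$ case via Proposition \ref{Lemma energy estimate-4} and \eqref{f13}, the choice $R=R_{\max}$ in \eqref{f64}, and above all the key new point that for $\theta_i<0$ one must switch from the lower bound \eqref{es-3} to the upper bound \eqref{es-4} and then use $k<\mu_1^{-1}$, $l<\mu_2^{-1}$ so that the hypothesis $32e^{\lambda_i/\theta_i}/(\mu_iR_{\max}^2)<1$ makes the logarithm negative. However, there is a genuine gap in your reduction step when $\beta>\max\lbr{\mu_1,\mu_2}$. You bound $\mathcal{C}_M$ by $\max_{t_1,t_2>0}\mathcal{L}(t_1w_\varepsilon,t_2z_\varepsilon)$ and claim the evaluation of \eqref{f18}--\eqref{f17} "transcribes verbatim", in particular that $(1,1)$ maximizes $g(t_1,t_2)$ so that $I_1\leq\mathcal{A}+O(\varepsilon^2)$. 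But for $\beta>\max\lbr{\mu_1,\mu_2}$ the Hessian block $\bigl(\begin{smallmatrix}\mu_1k^2&\beta kl\\ \beta kl&\mu_2l^2\end{smallmatrix}\bigr)$ has determinant $k^2l^2(\mu_1\mu_2-\beta^2)<0$, so $(1,1)$ is a \emph{saddle} of $g$, and the two-parameter supremum escapes to the boundary: sending $t_1\to0^+$,
\begin{equation}
\sup_{t_1,t_2>0}\mathcal{L}(t_1w_\varepsilon,t_2z_\varepsilon)\;\geq\;\sup_{t_2>0}\mathcal{L}(0,t_2z_\varepsilon)\;\geq\;\frac{1}{4\mu_2}\mathcal{S}^2+o(1)\;>\;\frac{1}{4}(k+l)\mathcal{S}^2=\mathcal{A},
\end{equation}
since $\mu_2\sbr{2\beta-\mu_1-\mu_2}-\sbr{\beta^2-\mu_1\mu_2}=-(\beta-\mu_2)^2<0$ gives $k+l<\mu_2^{-1}$. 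So your intermediate quantity is strictly larger than $\mathcal{A}$ and cannot deliver the conclusion. Relatedly, the assertion that the interior-maximizer argument of Proposition \ref{Lemma energy estimate-2} "does not see the sign of $\theta_i$" is incorrect: the exclusion of boundary maximizers there rests on $-\frac{\theta_1}{2}t_1^2\log t_1^2>0$ for $t_1$ small, which requires $\theta_1>0$.

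The repair is what the paper actually does: drop the two-parameter maximization entirely and work only with the diagonal ray $G(t)=\mathcal{L}(tw_\varepsilon,tz_\varepsilon)$, which already gives $\mathcal{C}_M\leq\max_{t>0}G(t)$. Along the diagonal, the relations \eqref{f14} yield $\mu_1k^2+2\beta kl+\mu_2l^2=k+l$, so the leading block is $\sbr{\frac{t^2}{2}(k+l)-\frac{t^4}{4}(k+l)}\sbr{\mathcal{S}^2+O(\varepsilon^2)}\leq\mathcal{A}+O(\varepsilon^2)$ for \emph{every} $t>0$, with no need to locate or classify a two-dimensional maximizer; one only needs $t_\varepsilon$ bounded above and below and $t_\varepsilon\to1$, which follows from the single scalar criticality equation. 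With that substitution, your treatment of $I_2$ and $I_3$ is exactly the paper's and the proof closes.
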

	\begin{proof}
		
		Firstly, for the case $\beta <0$,    the conclusion  follows directly from Proposition \ref{Lemma energy estimate-4}  and formula \eqref{f13}.  Now we turn to the  proof for the case  $\beta\in  \sbr{ 0,\min\lbr{\mu_1,\mu_2}}  \cup  \sbr{\max\lbr{\mu_1,\mu_2},+\infty}$.
		
		Without loss of generality, we assume that there exist $y_0\in \Omega$ such that $y_0$ is the geometric center of $\Omega$, i.e. $R_{\max}=\mathrm{dist}(y_0,\partial \Omega)$. For simplicity, we still use the notations in the proof of  Proposition \ref{Lemma energy estimate-2} and take $R=R_{\max}$. Let  $G(t):=\mathcal{L}(tw_{\varepsilon},tz_{\varepsilon})$ (recall that $w_{\varepsilon}, z_{\varepsilon}$ is defined in\eqref{f19}), then a direct computation gives that
		\begin{equation}
			\begin{aligned}
				G(t)& =\mathcal{L}(t 	w_\varepsilon,t z_{\varepsilon})\\
				&=\frac{1}{2}t^2\int_{\Omega}\sbr{|\nabla w_\varepsilon|^2+ |\nabla z_{\varepsilon}|^2}-\frac{1}{4}t^4\int_{\Omega}\sbr{\mu_1|w_\varepsilon|^4+2\beta |w_\varepsilon|^2|z_{\varepsilon}|^2+\mu_2|z_\varepsilon|^4}\\
				&-\frac{\lambda_1-\theta_1}{2}t^2\int_{\Omega}|w_{\varepsilon}|^2 -\frac{\theta_1}{2}t^2\log t^2\int_{\Omega} |w_{\varepsilon}|^2-\frac{\theta_1}{2}t^2\int_{\Omega} w_{\varepsilon}^2\log w_{\varepsilon}^2\\
				&-\frac{\lambda_2-\theta_2}{2}t^2\int_{\Omega}|z_{\varepsilon}|^2 -\frac{\theta_2}{2}t^2\log t^2\int_{\Omega} |z_{\varepsilon}|^2-\frac{\theta_2}{2}t^2\int_{\Omega} z_{\varepsilon}^2\log z_{\varepsilon}^2\\
				&	=:I_1(t)+I_2(t)+I_3(t).
			\end{aligned}
		\end{equation}
		It is easy to see that $G(0)=0$ and $\lim_{t \to +\infty} G(t)=-\infty$, then we can find $t_{\varepsilon}\in \sbr{0,+\infty}$ such that
		\begin{equation}
			\max_{t >0}	\mathcal{L}(tw_{\varepsilon},tz_{\varepsilon})	=\max_{t >0} G(t)=G(t_{\varepsilon}).
		\end{equation}
		So, we have
		\begin{equation}
			\begin{aligned}
				\int_{\Omega}\sbr{|\nabla w_\varepsilon|^2+ |\nabla z_{\varepsilon}|^2}&=\int_{\Omega}\sbr{\lambda_{1}|w_\varepsilon|^2+\lambda_{2}|z_\varepsilon|^2} +\theta_{1}\int_{\Omega} w_{\varepsilon}^2\log w_{\varepsilon}^2+\theta_{2}\int_{\Omega} z_{\varepsilon}^2\log z_{\varepsilon}^2\\
				&+ t_\varepsilon^2\int_{\Omega}\sbr{\mu_1|w_\varepsilon|^4+2\beta |w_\varepsilon|^2|z_{\varepsilon}|^2+\mu_2|z_\varepsilon|^4}+\log t_\varepsilon^2
				\int_{\Omega}\sbr{\theta_{1}|w_\varepsilon|^2+\theta_{2}|z_\varepsilon|^2}.
			\end{aligned}
		\end{equation}
		A standard argument implies that $t_{\varepsilon}$ is bounded from below and above for $\varepsilon$ small enough. Moreover,  we can see from  \eqref{f14} and \eqref{es-7} that $t_{\varepsilon}\to 1$ as $\varepsilon\to 0^+$. Therefore,
		\begin{equation}
			\log t_\varepsilon^2 \int_{\Omega}\theta_{1}|w_\varepsilon|^2=o(\varepsilon^2|\log \varepsilon|), \quad  \log t_\varepsilon^2 \int_{\Omega}\theta_{2}|z_\varepsilon|^2=o(\varepsilon^2|\log \varepsilon|).
		\end{equation}
		By using \eqref{f14} and \eqref{es-7} once more, we have
		\begin{equation}
			\begin{aligned}
				I_1(t_{\varepsilon})&=\mbr{\frac{1}{2}(k+l)t_{\varepsilon}^2-\frac{1}{4}(\mu_1k^2+2\beta kl+\mu_2l^2)t_{\varepsilon}^4}(\mathcal{S}^2+O(\varepsilon^2))\\
				&\leq \frac{1}{4}(k+l)\mathcal{S}^2+O(\varepsilon^2)=\mathcal{A}+O(\varepsilon^2).
			\end{aligned}
		\end{equation}
		On the other hand, since $\theta_{1}<0$, by \eqref{es-4} we have
		\begin{equation}
			\begin{aligned}
				I_2(t_\varepsilon)&=-\frac{\lambda_1-\theta_1}{2}t_{\varepsilon}^2\int_{\Omega}|w_{\varepsilon}|^2 -\frac{\theta_1}{2}t_{\varepsilon}^2\log t_{\varepsilon}^2\int_{\Omega} |w_{\varepsilon}|^2-\frac{\theta_1}{2}t_{\varepsilon}^2\int_{\Omega} w_{\varepsilon}^2\log w_{\varepsilon}^2\\
				&=-\frac{k(\lambda_1-\theta_1)+\theta_1 k\log  k}{2}t_{\varepsilon}^2\int_{\Omega}|v_{\varepsilon}|^2-\frac{\theta_1k}{2}t_{\varepsilon}^2\int_{\Omega} v_{\varepsilon}^2\log v_{\varepsilon}^2+o(\varepsilon^2|\log \varepsilon|)\\
				&\leq -\frac{t_{\varepsilon}^2}{2}\mbr{\sbr{k\lambda_1-\theta_1k+\theta_1 k\log  k} 8\omega_4\varepsilon^2|\log \varepsilon| + 8\theta_1k \log \sbr{\cfrac{8e(\varepsilon^2+4R^2)}{(\varepsilon^2+R^2)^2}}\omega_4 \varepsilon^2|\log \varepsilon|}+o(\varepsilon^2|\log \varepsilon|)\\
				&\leq -4t_{\varepsilon}^2\log\sbr{\cfrac{8ke^{\lambda_{1}/\theta_1}(\varepsilon^2+4R^2)}{(\varepsilon^2+R^2)^2}}\theta_1k\omega_4 \varepsilon^2|\log \varepsilon|+o(\varepsilon^2|\log \varepsilon|)\\
				&\leq -4t_{\varepsilon}^2\log\sbr{\cfrac{32ke^{\lambda_{1}/\theta_1}}{R^2}}\theta_1k\omega_4 \varepsilon^2|\log \varepsilon|+o(\varepsilon^2|\log \varepsilon|).
			\end{aligned}
		\end{equation}
		Similarly, we have
		\begin{equation}
			\begin{aligned}
				I_3(t_\varepsilon)\leq -4t_{\varepsilon}^2\log\sbr{\cfrac{32le^{\lambda_{2}/\theta_2}}{R^2}}\theta_2k\omega_4 \varepsilon^2|\log \varepsilon|+o(\varepsilon^2|\log \varepsilon|).
			\end{aligned}
		\end{equation}
		Recall that $(k,l)$ is a solution of \eqref{f14}, it is easy to see from $0<\beta<\min\lbr{\mu_1,\mu_2}$ and $\beta>\max \lbr{\mu_1,\mu_2}$ that $0<k<\frac{1}{\mu_1}$ and $0<l<\frac{1}{\mu_2}$.
		Then we can deduce from the assumptions that $\frac{32ke^{\lambda_{1}/\theta_1}}{R^2}<1$, $\frac{32le^{\lambda_{2}/\theta_2}}{R^2}<1$, and
		\begin{equation}
			I_2(t_\varepsilon)\leq -C\log\sbr{\cfrac{32ke^{\lambda_{1}/\theta_1}}{  R^2}}\theta_1k\omega_4 \varepsilon^2|\log \varepsilon|+o(\varepsilon^2|\log \varepsilon|),
		\end{equation}
		\begin{equation}
			I_3(t_\varepsilon)\leq -C\log\sbr{\cfrac{32le^{\lambda_{2}/\theta_2}}{R^2}}\theta_2k\omega_4 \varepsilon^2|\log \varepsilon|+o(\varepsilon^2|\log \varepsilon|).
		\end{equation}
		So,
		\begin{equation}
			\mathcal{C}_M\leq \max_{t >0} \mathcal{L}(t_{\varepsilon}	w_\varepsilon,t_{2,\varepsilon}z_{\varepsilon})= I_1(t_{\varepsilon})+I_2(t_{\varepsilon})+I_3(t_{\varepsilon})<\mathcal{A},
		\end{equation}
		for $\varepsilon$ small enough. This completes the proof.
	\end{proof}

In the end of this section, we present the proof of Theorem \ref{Theorem-5}.

\begin{proof}[\bf Proof of Theorem \ref{Theorem-5}.]  Under the hypotheses of Theorem \ref{Theorem-5}. By Lemma \ref{lm}, we can easily see that the functional $\mathcal{L}$ has  a  mountain pass structure. Then by the mountain pass theorem, there exists a sequence $\lbr{\sbr{u_n,v_n}}\in \mathcal{H}$ such that
	\begin{equation} \label{f65}
		\lim_{n\to \infty} \mathcal{L}\sbr{u_n,v_n}=\mathcal{C}_M>0,\quad \text{ and }\quad 	\lim_{n\to \infty} \mathcal{L}^\prime\sbr{u_n,v_n}=0.
	\end{equation}
 By Lemma \ref{boundedness}, we can see that $\lbr{(u_n,v_n)}$ is bounded in $\mathcal{H} $.  So   there exists  $(u,v)\in \mathcal{H} $, such that up to subsequence,
	\begin{equation}
		\begin{aligned}
			&u_n \rightharpoonup u, \quad  v_n \rightharpoonup v, \text{ weakly in } H_0^1(\Omega),\\
			&u_n \rightharpoonup u, \quad v_n\rightharpoonup v \ \text{ weakly in } L^4(\Omega),\\
			&u_n \to u, \quad v_n\to v \ \text{ strongly in } L^p(\Omega) \text{ for } 2\leq p<4,\\
			&u_n \to u, \quad v_n \to v \ \text{ almost everywhere in } \Omega.
		\end{aligned}
	\end{equation}
	It is easy to see that  $ \mathcal{L}^\prime\sbr{u_n,v_n}(\varphi_1,\varphi_2)
	\to 0$ as $n\to \infty$ for any  $\varphi_1,\varphi_2 \in H_0^1(\Omega)$. So, $(u,v)$ is a weak solution of the system
	\begin{equation}
		\begin{cases}
			-\Delta u=\lambda_{1}u^++
			\mu_1|u^+|^{2}u^++\beta |v^+|^{2}u^++\theta_1 u^+\log (u^+)^2, & \quad x\in \Omega,\\
			-\Delta v=\lambda_{2}v^++
			\mu_2|v^+|^{2}v^++\beta |u^+|^{2}v^++\theta_2 v^+\log (v^+)^2,  &\quad x\in \Omega.
		\end{cases}
	\end{equation}
	Set $w_n=u_n-u$ and $z_n=v_n-v$, by a similar argument as used in the proof of Theorem \ref{Theorem-1}, we have
	\begin{equation} \label{f66}
		\begin{aligned}
			|\nabla w_n|_2^2=\mu_1|w_n^+|_4^4+\beta|w_n^+z_n^+|_2^2+o_n(1), \quad 	|\nabla z_n|_2^2=\mu_2|z_n^+|_4^4+\beta|w_n^+z_n^+|_2^2+o_n(1).
		\end{aligned}
	\end{equation}
	and
	\begin{equation}\label{f67}
		\mathcal{L}(u,v)+\frac{1}{4}k_1+\frac{1}{4}k_2=	\lim_{n\to \infty} \mathcal{L}(u_n,v_n)=\mathcal{C}_M,
	\end{equation}
	where  	$k_1:=\lim_{n\to \infty}\int_{\Omega}|\nabla w_n|^2\geq 0$ and $ k_2:=\lim_{n\to \infty}\int_{\Omega}|\nabla z_n|^2\geq 0$.
	
	Now we claim that $(u,v)\neq (0,0)$. Supposing by contradiction, if  $(u,v)=(0,0)$, then $\mathcal{L}(u,v)=0$. We can see from \eqref{f65} that  $k_1+k_2>0$. By contradiction,  if $k_1>0$, $k_2=0$, then we have $|w_n^+z_n^+|_2^2=o_n(1)$.  By \eqref{f66}, we have
	\begin{equation}
		\int_{\Omega}|\nabla w_n|^2 =\mu_1|w_n^+|_4^4+o_n(1)\leq \mu_1\mathcal{S}^{-2}\sbr{\int_{\Omega}|\nabla w_n|^2 }^2.
	\end{equation}
	Letting $n\to \infty$, we have  $k_1\geq \mu_1^{-1}\mathcal{S}^{2}$. Hence, by \eqref{f67}, we know that $\mathcal{C}_M\geq \frac{1}{4}\mu_1^{-1}\mathcal{S}^{2}$, which contradicts to Proposition \ref{Lemma energy estimate-4}. Similarly, the case $k_1=0$, $k_2>0$ also implies a contradiction.  So both $k_1$ and $k_2$ are positive.
	Following the same proof as Case 1 in the proof of (1)-(2) of Theorem \ref{Theorem-1}, we have
	\begin{equation}
		\mathcal{C}_M\geq \mathcal{A},
	\end{equation}
	a contradiction with Proposition \ref{Lemma energy estimate-5}. Hence, $(u,v)\in \mathcal{H}\setminus\lbr{(0,0)}$.  Finally, by a similar argument as used in that of  the proof of (1)-(2) of Theorem \ref{Theorem-1}, we have $u\geq0, v\geq 0$, and $u,v \in C^2(\Omega)$. This completes the proof.
\end{proof}

	\section{The nonexistence results}	\label{Sect5}
	
	\begin{proof}[\bf Proof of Theorem \ref{Theorem-3}]
	
	WLOG, we assume $(\lambda_{1}, \mu_{1}, \theta_{1}) \in \varSigma_2$. The proof of $(\lambda_{2}, \mu_{2}, \theta_{2}) \in \varSigma_2$ is similar. We argue by contradiction. Assume that the system \eqref{System1}  has a positive solution $(u,v)$. Multiply the equation for $u$ in \eqref{System1} by $e_1$ where $e_1(x)$ is the first eigenfunction corresponding to $\lambda_1(\Omega)$, and integrate over $\Omega$, which yields
	\begin{equation} \label{eq5.1}
		\lambda_1(\Omega)\int_{\Omega}ue_1 = \int_{\Omega}\nabla u \nabla e_1 = \int_{\Omega}(\lambda_{1} + \mu_1|u|^2 + \theta_{1}\log u^2 + \beta |v|^2)ue_1.
	\end{equation}
    Let $g(t) = \mu_1 t + \theta_1 \log t + \lambda_{1}, t > 0$. Since $g'(t) = \mu_1 + \frac{\theta_1}{t}$, we obtain
    $$
    g(t) \geq g(-\frac{\theta_1}{\mu_1}) = \lambda_{1} + |\theta_1| + \theta_1\log|\theta_1| - \theta_1\log\mu_1.
    $$
    Noticing $(\lambda_{1}, \mu_{1}, \theta_{1}) \in \varSigma_2$, one gets that $g(t) \geq \lambda_{1}(\Omega)$ for all $t > 0$. Then \eqref{eq5.1} yields that
    $$
    \lambda_1(\Omega)\int_{\Omega}ue_1 \geq \int_{\Omega}(\lambda_{1}(\Omega) + \beta |v|^2)ue_1,
    $$
    i.e.
    $$
    \beta\int_{\Omega}|v|^2ue_1 \leq 0.
    $$
    This is a contradiction since $\beta > 0$ and $\int_{\Omega}|v|^2ue_1 > 0$. The proof is complete.
	\end{proof}

	\begin{proof}[\bf Proof of Theorem \ref{Theorem-4}]
	Assume that the system \eqref{System1}  has a positive solution $(u,v)$. Multiply the equation for $u$ in \eqref{System1} by $v$, the equation for $v$ by $u$, and integrate over $\Omega$, which yields
	\begin{equation} \label{eq5.2}
		\int_{\Omega}\left((\lambda_{2}-\lambda_{1})+(\mu_2-\beta)|v|^2-(\mu_1-\beta)|u|^2+\theta_2\log v^2-\theta_1\log u^2\right)uv = 0.
	\end{equation}	
    Let $g(s,t) = (\mu_2-\beta)s-(\mu_1-\beta)t+\theta_2\log s-\theta_1\log t, s,t > 0$. Since $g'_s(s,t) = \mu_2-\beta+\frac{\theta_2}{s}, g'_t(s,t) = \beta - \mu_1 - \frac{\theta_1}{t}$, we obtain
    $$
    g(s,t) \geq g(-\frac{\theta_2}{\mu_2-\beta},\frac{\theta_1}{\beta-\mu_1}) =-\theta_{1} \log \sbr{\frac{\theta_{1}}{\beta-\mu_{1}}}+\theta_{2} \log \sbr{\frac{\theta_{2}}{\beta-\mu_{2}}}.
    $$
    Under the conditions of Theorem \ref{Theorem-4}, one gets that $g(s,t) > \lambda_{1}-\lambda_{2}$ for all $s,t > 0$. Then \eqref{eq5.2} provides a contradiction. The proof is complete.
		
	\end{proof}

    \appendix

    \section{Single equation with $\theta < 0$} \label{single equation}
    In this appendix, we consider the following equation:	
    	\begin{equation} \label{equation of u}
    		\begin{cases}
    			-\Delta u=\lambda u + \mu |u|^{2}u+\theta u\log u^2, & \quad x\in \Omega\\
    			u=0, &\quad  x \in \partial \Omega,
    		\end{cases}
    	\end{equation}
        where $\Omega \subset \mathbb{R}^4$ is bounded and $\mu > 0, \theta < 0$.
    	We define the associated modified energy functional
    	\begin{equation}
    		J(u)=\frac{1}{2} \int_{\Omega}|\nabla u|^2-\frac{\lambda}{2} \int_{\Omega}|u^+|^2-\frac{\mu}{4}\int_{\Omega} |u^+|^4-\frac{\theta}{2} \int_{\Omega} (u^+)^2\sbr{\log (u^+)^2-1}.
    	\end{equation}
    	Set
    	$$
    	\varSigma_3 := \left\{(\lambda,\mu,\theta): \lambda \in [0,\lambda_{1}(\Omega)), \mu > 0, \theta < 0, \frac{(\lambda_1(\Omega)-\lambda)^2}{\lambda_1(\Omega)^2\mu}S^2 + 2\theta|\Omega| > 0\right\},
    	$$
    	$$
    	\varSigma_4 := \left\{(\lambda,\mu,\theta): \lambda \in \mathbb{R}, \mu > 0, \theta < 0, \mu^{-1}S^2 +  2\theta e^{-\frac{\lambda}{\theta}}|\Omega| > 0\right\}.
    	$$
    	
    	\begin{lemma} \label{lem lm}
    		Assume that $(\lambda,\mu,\theta) \in \varSigma_3\cup\varSigma_4$.
    		Then there exist $\delta, \rho > 0$ such that $J(u) \geq \delta$ for all $|\nabla u|_2 = \rho$.
    	\end{lemma}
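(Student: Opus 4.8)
The plan is to follow the scheme of Lemma~\ref{lm}, specializing it to the single equation and treating the two constituents $\varSigma_3$ and $\varSigma_4$ of the hypothesis $\varSigma_3\cup\varSigma_4$ separately. In both cases the only analytic inputs are the Sobolev inequality $|u^+|_4^4 \le \mathcal{S}^{-2}|\nabla u|_2^4$ (valid since $|\nabla u^+|_2 \le |\nabla u|_2$) and an elementary pointwise lower bound for the logarithmic term, after which the problem reduces to balancing a quadratic against a quartic in $\rho = |\nabla u|_2$.

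First, when $(\lambda,\mu,\theta) \in \varSigma_3$, I would exploit $\lambda \in [0,\lambda_1(\Omega))$ to absorb the quadratic term into the Dirichlet energy via the eigenvalue inequality $\int_\Omega (u^+)^2 \le \lambda_1(\Omega)^{-1}|\nabla u|_2^2$, giving $\frac12|\nabla u|_2^2 - \frac\lambda2\int_\Omega (u^+)^2 \ge \frac12\frac{\lambda_1(\Omega)-\lambda}{\lambda_1(\Omega)}|\nabla u|_2^2$. For the logarithmic term, since $\theta < 0$ and $t(\log t - 1) \ge -1$ for $t \ge 0$, one has $-\frac\theta2\int_\Omega (u^+)^2(\log (u^+)^2 - 1) \ge \frac\theta2|\Omega|$. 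Combining with the Sobolev bound yields $J(u) \ge \frac12\frac{\lambda_1(\Omega)-\lambda}{\lambda_1(\Omega)}\rho^2 - \frac{\mu}{4\mathcal{S}^2}\rho^4 + \frac\theta2|\Omega|$ on $\{|\nabla u|_2 = \rho\}$. Choosing $\rho^2 = \frac{\lambda_1(\Omega)-\lambda}{\lambda_1(\Omega)}\frac{\mathcal{S}^2}{\mu}$ maximizes the quadratic-minus-quartic and produces $J(u) \ge \frac14\frac{(\lambda_1(\Omega)-\lambda)^2}{\lambda_1(\Omega)^2\mu}\mathcal{S}^2 + \frac\theta2|\Omega| =: \delta$, which is positive precisely by the defining inequality of $\varSigma_3$.

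In the case $(\lambda,\mu,\theta) \in \varSigma_4$ there is no sign restriction on $\lambda$, so the eigenvalue trick is unavailable; instead I would merge the quadratic and logarithmic contributions, writing $-\frac\lambda2\int_\Omega (u^+)^2 - \frac\theta2\int_\Omega (u^+)^2(\log (u^+)^2 - 1) = -\frac\theta2\int_\Omega (u^+)^2\bigl(\log (u^+)^2 - 1 + \tfrac\lambda\theta\bigr)$ and use that $t(\log t - 1 + \tfrac\lambda\theta)$ attains its minimum $-e^{-\lambda/\theta}$ at $t = e^{-\lambda/\theta}$; since $\theta < 0$ this gives the lower bound $\frac\theta2 e^{-\lambda/\theta}|\Omega|$. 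Keeping the full coefficient $\frac12$ on the Dirichlet energy and applying Sobolev as before, I obtain $J(u) \ge \frac12\rho^2 - \frac{\mu}{4\mathcal{S}^2}\rho^4 + \frac\theta2 e^{-\lambda/\theta}|\Omega|$, and the choice $\rho^2 = \mathcal{S}^2/\mu$ gives $J(u) \ge \frac14\mu^{-1}\mathcal{S}^2 + \frac\theta2 e^{-\lambda/\theta}|\Omega| =: \delta > 0$ by the definition of $\varSigma_4$.

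I do not expect any serious obstacle: this lemma is the one-component specialization of Lemma~\ref{lm}, and its entire content is the two minimization estimates for $t(\log t - 1)$ and $t(\log t - 1 + \lambda/\theta)$ together with the standard quadratic-versus-quartic optimization. The only point requiring genuine care is the sign of $\theta$: because $\theta < 0$, multiplying the pointwise logarithmic lower bounds by $-\theta/2 > 0$ preserves the inequality direction, and $\delta$ comes out positive exactly because the defining conditions of $\varSigma_3$ and $\varSigma_4$ assert that the optimized Sobolev constant plus the boundary term harvested from the logarithm is positive.
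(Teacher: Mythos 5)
Your proof is correct and is essentially the paper's own argument: the paper omits the proof of this lemma by deferring to \cite{Deng-He-Pan=Arxiv=2022}, but your two cases are precisely the single-component specializations of cases (i) and (iii) in the proof of Lemma \ref{lm}, using the same eigenvalue absorption for $\varSigma_3$, the same pointwise minimizations of $t(\log t-1)$ and $t(\log t-1+\lambda/\theta)$, and the same optimal choice of $\rho$. The resulting values of $\delta$ match those in Lemma \ref{lm}, and positivity follows exactly from the defining inequalities of $\varSigma_3$ and $\varSigma_4$, so there is nothing to correct.
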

    	
    	\begin{proof}
%    		{\bf Case 1.} $(\lambda,\mu,\theta) \in \varSigma_3$.
%    		
%    		We have
%    		\begin{align}
%    			J(u)
%    			\geq \frac12\frac{\lambda_1(\Omega)-\lambda}{\lambda_1(\Omega)}|\nabla u|_2^2 - \frac{\mu}{4S^2}|\nabla u|_2^4 + \frac{\theta}{2}|\Omega|.
%    		\end{align}
%    		Let $\rho = \sqrt{\frac{\lambda_1(\Omega)-\lambda}{\lambda_1(\Omega)\mu}}S$ and $\delta = \frac14 \frac{(\lambda_1(\Omega)-\lambda)^2}{\lambda_1(\Omega)^2\mu}S^2 + \frac{\theta}{2}|\Omega|$, we deduce that
%    		\begin{align}
%    			J(u) \geq \frac14 \frac{(\lambda_1(\Omega)-\lambda)^2}{\lambda_1(\Omega)^2\mu}S^2 + \frac{\theta}{2}|\Omega|= \delta > 0.
%    		\end{align}
%    		
%    		{\bf Case 2.} $(\lambda,\mu,\theta) \in \varSigma_4$.
%    		
%    		We have
%    		\begin{align}
%    			J(u) \geq \frac12|\nabla u|_2^2 - \frac{\mu}{4S^2}|\nabla u|_2^4 + \frac{\theta}{2}e^{-\frac{\lambda}{\theta}}|\Omega|.
%    		\end{align}
%    		Let $\rho = \sqrt{\frac{1}{\mu}}S$ and $\delta = \frac14\mu^{-1}S^2 + \frac{\theta e^{-\frac{\lambda}{\theta}}}{2}|\Omega|$, we deduce that
%    		\begin{equation}
%    			J(u) \geq \frac14 \mu^{-1}S^2 + \frac{\theta e^{-\frac{\lambda}{\theta}}}{2}|\Omega| = \delta > 0.
%    		\end{equation}
The proof can be found in \cite{Deng-He-Pan=Arxiv=2022}, so we omit it.
    	\end{proof}

    	\begin{theorem}[Existence of a local minima] \label{l-m}
    		Assume that $(\lambda,\mu,\theta) \in \varSigma_3\cup\varSigma_4$. Define
    		$$
    		\tilde{c}_\rho := \inf_{|\nabla u|_2 < \rho}J(u),
    		$$
    		where $\rho$ is given by Lemma \ref{lem lm}.
    		Then equation \eqref{equation of u} has a positive solution $u$ such that $J(u) = \tilde{c}_\rho$.
    	\end{theorem}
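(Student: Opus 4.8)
The plan is to realize $u$ as a constrained minimizer of $J$ on the open ball $B_\rho = \{u \in H_0^1(\Omega): |\nabla u|_2 < \rho\}$, following the scheme of Theorem~\ref{Theorem-2} but in the simpler single-component setting, where there is no coupling term and no semi-trivial alternative to exclude. The barrier supplied by Lemma~\ref{lem lm}, namely $J(u) \ge \delta > 0$ on the sphere $|\nabla u|_2 = \rho$, will confine the minimizer strictly inside $B_\rho$, so that the infimum is attained at an interior point and is therefore automatically a free critical point.

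First I would establish that $-\infty < \tilde c_\rho < 0$, the analogue of Lemma~\ref{energy level}. Boundedness below is immediate: on $B_\rho$ the Dirichlet norm is bounded, hence so is $|u^+|_4^4$ by Sobolev, and the logarithmic term is controlled via $|s^2\log s^2| \le C s^{2-\tau} + C s^{2+\tau}$, so $J$ is bounded below there. For strict negativity I fix any $u$ with $|\nabla u|_2 < \rho$ and $\int_\Omega (u^+)^2 > 0$ and examine
\begin{equation*}
J(tu) = t^2\Big(\tfrac12\int_\Omega|\nabla u|^2 - \tfrac{\lambda}{2}\int_\Omega|u^+|^2 - \tfrac{\mu}{4}t^2\int_\Omega|u^+|^4 - \tfrac{\theta}{2}\int_\Omega(u^+)^2(\log(u^+)^2-1) - \theta\log t\int_\Omega(u^+)^2\Big);
\end{equation*}
since $\theta < 0$, the term $-\theta(\log t)\int_\Omega(u^+)^2 \to -\infty$ as $t \to 0^+$, so the bracket is negative for small $t$ and $J(tu) < 0$, giving $\tilde c_\rho < 0$.

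Next I would take a minimizing sequence $\{u_n\} \subset B_{\rho-\tau}$ for $\tau > 0$ small; since $\tilde c_\rho < 0 < \delta$ the minimizing sequence stays away from the boundary sphere, so Ekeland's variational principle produces $\{u_n\}$ with $J(u_n) \to \tilde c_\rho$ and $J'(u_n) \to 0$. This sequence is bounded (it lies in $B_\rho$), so up to a subsequence $u_n \rightharpoonup u$ in $H_0^1(\Omega)$, strongly in $L^p$ for $2 \le p < 4$, and a.e. Passing to the limit in $J'(u_n)$ — using the $L^p$-convergence and the dominated-convergence control of the logarithmic nonlinearity exactly as in the proof of Theorem~\ref{Theorem-1} — gives $J'(u) = 0$, and testing against $u^-$ forces $u \ge 0$. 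By weak lower semicontinuity $u \in B_\rho$, whence $J(u) \ge \tilde c_\rho$ by definition of the infimum.

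It remains to recover compactness. Writing $w_n = u_n - u$ and invoking the Br\'ezis--Lieb lemma together with \cite[Lemma 2.3]{Deng-He-Pan=Arxiv=2022} for the logarithmic term, I expect the Nehari-type identity $|\nabla w_n|_2^2 = \mu|w_n^+|_4^4 + o_n(1)$ and the energy splitting $J(u_n) = J(u) + \tfrac14\int_\Omega|\nabla w_n|^2 + o_n(1)$. Setting $k = \lim_n \int_\Omega |\nabla w_n|^2 \ge 0$ and combining with $J(u) \ge \tilde c_\rho$ yields $\tilde c_\rho \le J(u) \le J(u) + \tfrac14 k = \tilde c_\rho$, which forces $k = 0$ and $J(u) = \tilde c_\rho$. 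Hence $u_n \to u$ strongly, and since $\tilde c_\rho < 0 = J(0)$ we get $u \not\equiv 0$. Finally, Morse iteration and the H\"older estimate give $u \in C^{0,\gamma}(\Omega) \cap L^\infty(\Omega)$, and the strong maximum principle argument of \cite{Deng-He-Pan=Arxiv=2022,vazquez=AMO=1984} upgrades this to $u \in C^2(\Omega)$ with $u > 0$. The main obstacle will be the two places where the logarithmic nonlinearity must be tamed — the weak continuity of $J'$ and the Br\'ezis--Lieb-type splitting of both the energy and the constraint — but both are handled by the cited lemmas, after which the interior-minimizer scheme closes without the semi-trivial complications present in the system case.
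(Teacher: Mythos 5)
Your proposal is correct and follows essentially the same route as the paper's own proof: establish $-\infty<\tilde c_\rho<0$ by scaling $J(tu)$ with $t\to0^+$ (exploiting $\theta<0$), take an Ekeland minimizing sequence confined to $B_{\rho-\tau}$ by the barrier of Lemma \ref{lem lm}, pass to the weak limit to get a nonnegative critical point, and then use the Br\'ezis--Lieb splitting of both the energy and the constraint to force $k=0$ and strong convergence, with nontriviality coming from $\tilde c_\rho<0$ and positivity from the regularity/maximum-principle argument. No substantive differences from the paper's argument.
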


    	\begin{proof}
    		Similar to Lemma \ref{energy level} we obtain that $-\infty < \tilde{c}_\rho < 0$. By Lemma \ref{lem lm}, we can take a minimizing sequence $\lbr{u_n}$ for $\tilde{c}_\rho$ with $|\nabla u_n|_2 < \rho - \tau$ and $\tau > 0$ small enough. By Ekeland's variational principle, we can assume that $J'(u_n) \to 0$. Similar to Lemma \ref{boundedness}, we can see that $\lbr{u_n}$ is bounded in $H_0^1(\Omega)$. Hence, we may assume that
    		\begin{equation}
    			u_n \rightharpoonup u \text{ weakly in } H_0^1(\Omega).
    		\end{equation}
    		Passing to subsequence, we may also assume that
    		\begin{equation}
    			\begin{aligned}
    				&u_n \rightharpoonup u \ \text{ weakly in } L^4(\Omega),\\
    				&u_n \to u \ \text{ strongly in } L^p(\Omega) \text{ for } 2\leq p<4,\\
    				&u_n \to u \ \text{ almost everywhere in } \Omega.
    			\end{aligned}
    		\end{equation}
    		By the weak lower semi-continuity of the norm, we see that $|\nabla u|_2 < \rho$. Similar to the proof of Theorem \ref{Theorem-1} (1) and (2), we have $J^\prime (u)=0$ and $u \geq 0$. Let $w_n=u_n-u$. Then similar to the proof of Theorem \ref{Theorem-1} (1) and (2) one gets
    		\begin{equation}
    			\begin{aligned}
    				|\nabla w_n|_2^2=\mu|w_n^+|_4^4+o_n(1).
    			\end{aligned}
    		\end{equation}
    		\begin{equation} \label{eq energy 3}
    			J(u_n)=J(u)+\frac{1}{4}\int_{\Omega}|\nabla w_n|^2+o_n(1).
    		\end{equation}
    		Passing to subsequence, we may assume that
    		\begin{equation}
    			\int_{\Omega}|\nabla w_n|^2=k+o_n(1).
    		\end{equation}	
    		Letting $n\to +\infty$ in \eqref{eq energy 3}, we have
    		\begin{equation}
    			\tilde{c}_\rho \leq J(u) \leq J(u)+\frac{1}{4}k=\lim_{n\to \infty} J(u_n) = \tilde{c}_\rho,
    		\end{equation}
    		showing that $k=0$. Hence, up to a subsequence we obtain
    		\begin{equation}
    			u_n \to u \text{ strongly in } H_0^1(\Omega).
    		\end{equation}
    		Since $J(u) = \tilde{c}_\rho < 0$ we have $u \neq 0$. Then by a similar argument as used in the proof of (1)-(2) in Theorem \ref{Theorem-1}, we can show that $u>0$ and $u \in C^2(\Omega)$.  This completes the proof.
    	\end{proof}
    	
    	\begin{theorem}[Existence of the least energy solution] \label{l-e-s}
    		Assume that $(\lambda,\mu,\theta) \in \varSigma_3\cup\varSigma_4$. Define
    		$$
    		\tilde{c}_\mathcal{K} := \inf_{u\in \mathcal{K}}J(u)
    		$$
    		where
    		$$\mathcal{K} = \{u \in H^1_0(\Omega): J'(u) = 0\}.$$
    		Then equation \eqref{equation of u} has a positive least energy solution $u$ such that $J(u) = \tilde{c}_\mathcal{K}$.
    	\end{theorem}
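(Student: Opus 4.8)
The plan is to produce $u$ as a minimizer of $J$ over the \emph{entire} critical set $\mathcal{K}$, following the scheme of Theorem \ref{least ene solu} specialized to the scalar equation. The first step is to check that the level $\tilde{c}_\mathcal{K}$ is finite and strictly negative. For the upper bound, the positive local minimum produced by Theorem \ref{l-m} is a critical point, hence lies in $\mathcal{K}$ and satisfies $J(u)=\tilde{c}_\rho<0$, so $\tilde{c}_\mathcal{K}\le\tilde{c}_\rho<0$. For the lower bound I would use that every $u\in\mathcal{K}$ obeys $J'(u)u=0$, which gives
$$J(u)=J(u)-\tfrac12 J'(u)u=\tfrac{\mu}{4}|u^+|_4^4+\tfrac{\theta}{2}|u^+|_2^2\ge \tfrac{\mu}{4}|u^+|_4^4+\tfrac{\theta}{2}|\Omega|^{1/2}|u^+|_4^2,$$
where the inequality is Hölder's together with $\theta<0$. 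The right-hand side is a quadratic in $|u^+|_4^2$ with positive leading coefficient, hence bounded below by $-\theta^2|\Omega|/(4\mu)$, so $\tilde{c}_\mathcal{K}>-\infty$. Note that, unlike the coupled situation in Lemma \ref{energy level of l-e-s}, no extra sign restriction on $\theta$ is needed here because there is no coupling term to absorb.

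Next I would choose a minimizing sequence $\{u_n\}\subset\mathcal{K}$, so that $J'(u_n)=0$ and $J(u_n)\to\tilde{c}_\mathcal{K}$; this is automatically a Palais--Smale sequence. Boundedness in $H_0^1(\Omega)$ follows exactly as in Lemma \ref{boundedness}: from $J(u_n)=J(u_n)-\tfrac14 J'(u_n)u_n=\tfrac14|\nabla u_n|_2^2-\tfrac{\theta}{4}\int_\Omega (u_n^+)^2\log\sbr{e^{\lambda/\theta-2}(u_n^+)^2}$ and the elementary inequality $t\log t\ge -e^{-1}$ (usable since $\theta<0$), one gets $J(u_n)\ge\tfrac14|\nabla u_n|_2^2+\tfrac{\theta}{4}e^{1-\lambda/\theta}|\Omega|$, whence $|\nabla u_n|_2$ stays bounded. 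Passing to a subsequence, $u_n\rightharpoonup u$ in $H_0^1(\Omega)$, with strong convergence in $L^p(\Omega)$ for $2\le p<4$ and a.e.\ convergence. Using the growth bound $|s^2\log s^2|\le Cs^{2-\tau}+Cs^{2+\tau}$ and the dominated-convergence argument already carried out for Theorem \ref{Theorem-1}, the weak limit satisfies $J'(u)=0$, and testing against $u^-$ gives $u\ge 0$; in particular $u\in\mathcal{K}$.

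The compactness step is where the argument is cleanest. Writing $w_n=u_n-u$, the Br\'ezis--Lieb lemma together with \cite[Lemma 2.3]{Deng-He-Pan=Arxiv=2022} yields $|\nabla w_n|_2^2=\mu|w_n^+|_4^4+o_n(1)$ and the energy splitting $J(u_n)=J(u)+\tfrac14|\nabla w_n|_2^2+o_n(1)$. Setting $k=\lim_n|\nabla w_n|_2^2\ge 0$, the decisive sandwich is
$$\tilde{c}_\mathcal{K}\le J(u)\le J(u)+\tfrac14 k=\lim_{n\to\infty}J(u_n)=\tilde{c}_\mathcal{K},$$
in which the first inequality holds precisely because $u\in\mathcal{K}$. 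Hence $k=0$, so $u_n\to u$ strongly in $H_0^1(\Omega)$ and $J(u)=\tilde{c}_\mathcal{K}$. Since $\tilde{c}_\mathcal{K}<0=J(0)$, the limit is nontrivial, $u\not\equiv0$. Finally, Moser iteration gives $u\in L^\infty(\Omega)$, the ensuing H\"older and Schauder estimates give $u\in C^2(\Omega)$, and the strong maximum principle in the form of \cite{vazquez=AMO=1984} upgrades $u\ge0$ to $u>0$, exactly as in the proof of (1)--(2) of Theorem \ref{Theorem-1}; this exhibits $u$ as a positive solution with $J(u)=\tilde{c}_\mathcal{K}$, i.e.\ a positive least energy solution.

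I expect the main obstacle to be the loss of compactness caused by the critical Sobolev exponent $2^*=4$ in dimension four. Its resolution here, however, does not require a delicate energy-threshold comparison (as in the mountain-pass argument behind Theorem \ref{Theorem-5}): because the whole minimizing sequence consists of genuine critical points, its weak limit again lies in $\mathcal{K}$, and the sandwich inequality above forces the escaping mass $k$ to vanish outright. The subtler technical points are therefore the passage to the limit in the logarithmic term, handled by the uniform growth bound and dominated convergence, and securing $\tilde{c}_\mathcal{K}<0$ through the local minimum of Theorem \ref{l-m}, which is exactly what guarantees that the minimizer over $\mathcal{K}$ is nontrivial rather than the zero solution.
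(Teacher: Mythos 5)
Your proposal is correct and follows essentially the same route as the paper: the lower bound $\tilde{c}_\mathcal{K}>-\infty$ via $J(u)-\tfrac12 J'(u)u=\tfrac{\mu}{4}|u^+|_4^4+\tfrac{\theta}{2}|u^+|_2^2$ and H\"older (the scalar analogue of Case 2 of Lemma \ref{energy level of l-e-s}), the upper bound from the local minimum of Theorem \ref{l-m}, boundedness as in Lemma \ref{boundedness}, the Br\'ezis--Lieb energy splitting, and the sandwich inequality forcing $k=0$ are all exactly the paper's argument. The concluding regularity and positivity step is likewise handled by reference to the proof of (1)--(2) of Theorem \ref{Theorem-1}, as in the paper.
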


    	\begin{proof}
    		Similar to the case 2 in the proof of Lemma \ref{energy level of l-e-s} we have $-\infty < \tilde{c}_\mathcal{K} < 0$. Take a minimizing sequence $\lbr{u_n} \subset \mathcal{K}$ for $\tilde{c}_\mathcal{K}$. Then $J'(u_n) = 0$. SImilar to Lemma \ref{boundedness}, we can see that $\lbr{u_n}$ is bounded in $H_0^1(\Omega)$. Hence, we may assume that
    		\begin{equation}
    			u_n \rightharpoonup u \text{ weakly in } H_0^1(\Omega).
    		\end{equation}
    		Passing to subsequence, we may also assume that
    		\begin{equation}
    			\begin{aligned}
    				&u_n \rightharpoonup u \ \text{ weakly in } L^4(\Omega),\\
    				&u_n \to u \ \text{ strongly in } L^p(\Omega) \text{ for } 2\leq p<4,\\
    				&u_n \to u \ \text{ almost everywhere in } \Omega.
    			\end{aligned}
    		\end{equation}
    		Similar to the proof of Theorem \ref{Theorem-1} (1) and (2), we have $J^\prime (u)=0$ and $u \geq 0$. Let $w_n=u_n-u$. Then similar to the proof of Theorem \ref{Theorem-1} (1) and (2) one gets
    		\begin{equation}
    			\begin{aligned}
    				|\nabla w_n|_2^2=\mu|w_n^+|_4^4+o_n(1).
    			\end{aligned}
    		\end{equation}
    		\begin{equation} \label{eq energy 4}
    			J(u_n)=J(u)+\frac{1}{4}\int_{\Omega}|\nabla w_n|^2+o_n(1).
    		\end{equation}
    		Passing to subsequence, we may assume that
    		\begin{equation}
    			\int_{\Omega}|\nabla w_n|^2=k+o_n(1).
    		\end{equation}	
    		Letting $n\to +\infty$ in \eqref{eq energy 4}, we have
    		\begin{equation}
    			\tilde{c}_\mathcal{K} \leq J(u) \leq J(u)+\frac{1}{4}k=\lim_{n\to \infty} J(u_n) = \tilde{c}_\mathcal{K},
    		\end{equation}
    		showing that $k = 0$. Hence, up to a subsequence we obtain
    		\begin{equation}
    			u_n \to u \text{ strongly in } H_0^1(\Omega).
    		\end{equation}
    		Since $J(u) = \tilde{c}_\mathcal{K} < 0$ we have $u \neq 0$. Then by a similar argument as used in the proof of (1)-(2) in Theorem \ref{Theorem-1}, we can show that $u>0$ and $u \in C^2(\Omega)$.  This completes the proof.
    	\end{proof}
    	
    	\begin{remark}
    	{\rm 	In \cite[Theorem 1.3]{Deng-He-Pan=Arxiv=2022}, the authors showed that \eqref{equation of u} possesses a positive solution when $(\lambda,\mu,\theta) \in \varSigma_3 \cup \varSigma_4$ with $\frac{32e^{\frac{\lambda_i}{\theta_i}}}{\rho_{max}^2} < 1$, where $$\rho_{max} := \sup\{r > 0: \exists x \in \Omega \ s.t. \ B(x,r) \subset \Omega\}.$$ But they don't know the type of the solution and its energy level. In our Theorems \ref{l-m} and \ref{l-e-s}, we remove the condition $\frac{32e^{\frac{\lambda_i}{\theta_i}}}{\rho_{max}^2} < 1$. Furthermore, we give the type of the positive solution (a local minimum or a least energy solution) and show that its energy level is negative.}
    	\end{remark}

        It is easy to see that there is a mountain pass geometry since Theorem \ref{l-m} shows the existence of a local minimum $u$ and $J(tw) \to -\infty$ as $t \to \infty$ when $w^+ \neq 0$. Set
        \begin{equation}
        	\tilde{c}_{M} := \inf_{\gamma \in \tilde{\Gamma}}\sup_{t \in [0,1]}J(\gamma(t)),
        \end{equation}
        where
        $$
        \tilde{\Gamma} := \{\gamma \in C([0,1],H_0^1(\Omega)): \gamma(0) = u, J(\gamma(1)) < J(u)\},
        $$
        $u$ is the local minimum given by Theorem \ref{l-m}.

        {\bf Conjecture 2:} Equation \eqref{equation of u} possesses a positive mountain pass solution at level $\tilde{c}_{M} > 0$.

        \begin{remark}
        	If $\tilde{c}_M$ has the following estimate:
        	$$
        	\tilde{c}_M < \tilde{c}_\mathcal{K} + \frac14\mu^{-1}S^2,
        	$$
        	then Conjecture 2 holds true.
        \end{remark}

\end{document}